\newtheorem{theorem}{Theorem}[section]
\newtheorem{corollary}[theorem]{Corollary}
\newtheorem{definition}[theorem]{Definition}
\newtheorem{lemma}[theorem]{Lemma}
\newtheorem{proposition}[theorem]{Proposition}
\newtheorem{remark}[theorem]{Remark}
\newenvironment{proof}[1][Proof]{\noindent \textbf{#1.} }{\  $\Box$}
\numberwithin{equation}{section}
\begin{document}

\title{\textbf{ Global maximum principle for mean-field forward-backward stochastic systems with delay
and application to finance}}
\author{Tao HAO\thanks{%
School of Statistics, Shandong University of Finance and Economics, Jinan 250014, P. R. China.
haotao2012@hotmail.com. Research supported by National Natural Science Foundation of China
(Grant Nos. 71671104,11871309,11801315,71803097), the Ministry of Education of Humanities and Social Science Project (Grant No. 16YJA910003),
Natural Science Foundation of Shandong Province (No. ZR2018QA001),
A Project of Shandong  Province Higher Educational Science and Technology Program (Grant Nos. J17KA162, J17KA163),
and Incubation Group Project of Financial Statistics and Risk Management of SDUFE.} \and %
Qingxin MENG\thanks{%
Qingxin  Meng is the corresponding author. Department of Mathematics, Huzhou University, Zhejiang 313000, P. R. China.
mqx@zjhu.edu.cn. Research supported by Natural Science Foundation of Zhejiang Province for Distinguished Young Scholar (Grant No. LR15A010001),
and the National Natural Science Foundation of China (Grant No. 11871121,11471079).}  }
\maketitle
\date{}

\begin{abstract}
The purpose of this paper is to explore the  necessary conditions for optimality of mean-field forward-backward delay control systems.
A new estimate is proved, which is a powerful
tool to deal with the optimal control problems of mean-field type with delay.
Different from the classical situation, in our case
the first-order adjoint system is an anticipated mean-field backward stochastic
differential equation, and the second-order adjoint system is a system of  matrix-valued
process, not mean-field type.
With the help of two adjoint systems, the second-order expansion of the variation of the state $Y$ is proved, and therewith
the Peng's stochastic maximum principle.
As an illustrative example, we apply our result to the mean-field game in Finance. Although
we just investigate  the case of one pointwise delay for convenience, but our method is adequate for analysing  the case of pointwise delay.

\end{abstract}

\textbf{Key words}: Stochastic control; Maximum principle;  Mean-field forward-backward stochastic
differential equation with delay, adjoint equation

\textbf{MSC-classification}: 93E20; 60H10\\\\


\noindent \section{{\protect \large {Introduction}}}

In this paper, we are interested in the following mean-field stochastic control
problem in $\mathbb{R}$ with one pointwise delay in the state:
\begin{equation}\label{equ 3.1}
  \left\{
\begin{aligned}
dX^v(t)&=b(t,X^v(t),X^v(t-l),P_{X^v(t)},v(t))dt+\sigma(t,X^v(t),X^v(t-l),P_{X^v(t)},v(t))dW_t,\ t\in[0,T],\\
X^v(0)&=x(\theta),\ \theta\in[-l,0],\\
 -dY^v(t)&=f(t,X^v(t),X^v(t-l),Y^v(t),Z^v(t),P_{(X^v(t),Y^v(t))},v(t))dt-Z^v(t)dW_t,\ t\in[0,T],\\
 Y^v(T)&=\Phi(X^v(T),P_{X^v(T)}),
 \end{aligned}
 \right.
\end{equation}
where $W$ is  a one dimensional Brownian motion, $P_\xi=P\circ \xi^{-1}$ is the law of  $\xi$, $l>0$ is a time delay parameter,
$T$ is a given time horizon satisfying  $l< T\leq 2l$, $x(\cdot)$ is a continuous  function defined on $[-l,0]$, $U$ is a subset of $\mathbb{R}$, not necessarily convex, $\mathcal{P}_2(\mathbb{R}^d)$
is the space of square integrable probability measures on $\mathbb{R}^d$ with 2-Wasserstein metric,
$b,\sigma:[0,T]\times\mathbb{R}\times\mathbb{R}\times \mathcal{P}_2(\mathbb{R})\times U\mapsto\mathbb{R},$
$f:[0,T]\times\mathbb{R}\times\mathbb{R}\times\mathbb{R}\times\mathbb{R}\times \mathcal{P}_2(\mathbb{R}^2)\times U\mapsto\mathbb{R}$,
$\Phi:\mathbb{R}\times \mathcal{P}_2(\mathbb{R})\mapsto\mathbb{R}$
satisfy some suitable conditions,
and the associated cost functional is defined  by
$J(v):=Y^v(0)$.

The purpose of the paper is to find the  necessary condition of the optimal control $u^*$ satisfying
$$
J(u^*)=\min_{v\in\mathcal{U}_{ad}}J(v).
$$

Apart from the potential enormous applications in Finance of mean-field control systems with
delay, see \cite{AHMP}, \cite{CH}, \cite{CW}, \cite{SMS},  or see
the example in Section 6,
the solvability of Peng's open problem, refer to \cite{Hu}, and the progress in studying
the necessary conditions of optimality for control systems with pointwise delay in the state and with control
dependent noise, refer to \cite{GM}, and the rapid development of the theories of mean-field forward-backward stochastic
differential equations (FBSDEs) and mean-field FBSDEs with delay in recent years,
refer to \cite{BDLP}, \cite{BLP}, \cite{BLPR}, \cite{CD}, \cite{CD1}, \cite{CCD}, \cite{GXZ},
give a great impulse for the research of present work.

First, by considering the second-order term in the Taylor expansion of the variation of the process $X$,
Peng \cite{Peng1} in 1990 obtained the necessary condition of  the optimality for control systems where the diffusion coefficient $\sigma$
depends on control and the control domain $U$ need not be convex. Later, he proposed an open problem in 1998: How to extend the
classical optimal control problem to the recursive case in the above situation, i.e., the coefficient $f$ of the BSDE depends on
$(y,z)$. There are two main obstacles met when $f$ depending on $z$ nonlinearly  (see  Yong \cite{Yong}):  What is the second-order
variational equation and how to obtain the corresponding second-order adjoint equation?
Hence, this open problem was not solved completely until 2017 by Hu \cite{Hu}. Hu overcome the above two difficulties by
building a new second-order Taylor expansion of the variation of $Y$. His idea has been borrowed to investigate the control
problems of fully coupled forward-backward stochastic control systems, see \cite{HJX}.

Second, in the real world, many stochastic differential equations evolve depending not only on the current state but also on the state at past some time, which is called stochastic delay differential equations. This kind of equations can be found frequently
in the fields of both natural and social sciences, for example,
Finance, Economics, and Physics. Hence, the optimal control problems with delay attracted many people's attention, such as Elsanousi, $ \phi$ksandal, Sulem \cite{EOS}, Shen, Meng, Shi \cite{SMS}. As we know,
it is difficult to deal with the stochastic delay control problems. The difficulties come on the one hand
from the infinite-dimensional problem, on the other hand from the absence of It\^{o}'s formula to handle the delay part of state.
Especially, when the control domain $U$ is not convex, there have not been corresponding works to give the necessary condition
of the optimal driven by a delay control system until Guatteri and Masiero \cite{GM} in 2018. The authors in \cite{GM} noticed that
in the delay case the square of variation can not be written  a closed form by one equation because of the mixed term of present
and past. Hence an auxiliary  matrix-value equation is brought in to deal with this problem.

Third,
mean-field stochastic differential equations (SDEs), also named McKean-Vlasov equations, were investigated
by Kac in the early 1950s \cite{Kac} and  McKean in the 1960s \cite{Mckean}. But, only by the year of 2009 was
the theory of mean-field backward stochastic differential equations (BSDEs) built by Buckdahn, Djehiche, Li and Peng  \cite{BDLP},
Buckdahn,   Li and Peng \cite{BLP}. It should be pointed that for these types of mean-field FBSDEs the coefficients depend
on the expectation of the solution, not the law of the solution.
Recently, inspired by the work of Lions \cite{Lions} in which the first-order derivative of a function with respect to a measure was given,
many scholars showed great enthusiasm for first-order mean-field games, see for example \cite{CD}.
In 2017, Buckdahn, Li, Peng, Rainer \cite{BLPR} defined the second-order derivative
of a function with respect to a measure, and studied the relationship between general mean-field SDEs and associated nonlocal PDEs.
Afterwards,  there were many works on the research of general mean-field FBSDEs and their applications, such as \cite{BLPR}, \cite{BLM1}, \cite{CCD}, \cite{HL}, \cite{Li}.
Note that since the expectation of a random variable (or a stochastic process) can be expressed by the law of this random variable (or stochastic process), we call these mean-field FBSDEs where
the coefficients depends on the law of the solution, the general mean-field FBSDEs.

As for the optimal control problems of mean-field delay control systems, there are also fruitful works.
For example, Guo, Xiong, Zheng \cite{GXZ} investigated the first-order necessary and sufficient conditions for optimality of mean-field delay control problems where the coefficients depend on the expectation.
Buckdahn, Li, Ma \cite{BLM2} considered a
stochastic control problem with
partial observations for general mean-field systems in the case where the coefficients  depend on the paths of the state and the conditional law
of the state.

Based on the previous works \cite{Hu}, \cite{GM},  \cite{GXZ}, \cite{BLPR}, \cite{BLM1}, a natural question is:
Is it possible to develop the  necessary condition for optimality of the general mean-field forward-backward control systems with delay?
We confirmed the question.

Although our method to some extent follows the schemes in \cite{GM}, there are still some potential obstacles:

(i) Since we consider the recursive case, i.e., the coefficient $f$ depending on $(y,z)$, the power of the term
$p(t)\delta\sigma(t)\mathbbm{1}_{E_\varepsilon}$ in the variation of $z$  is $O(\varepsilon)$, but not $o(\varepsilon)$.
As a consequent beyond that the expansion of $X^\varepsilon$ are considered, which consists in the classical case,
we also need the second-order expansion of the variation of the state $Y$, i.e.,
\begin{equation}\label{equ 1.2}
Y^\varepsilon(t)=Y^*(t)+p(t)(X^{1,\varepsilon}(t)+X^{2,\varepsilon}(t))
+\frac{1}{2}P(t)(X^{1,\varepsilon}(t))^2+P_1(t)X^{1,\varepsilon}(t)X^{1,\varepsilon}(t-l)
+\breve{Y}(t)+o(\varepsilon),
\end{equation}
which is different to the classical case given by Hu \cite{Hu}:
\begin{equation}\label{equ 41.3}
Y^\varepsilon(t)=Y^*(t)+p(t)(X^{1,\varepsilon}(t)+X^{2,\varepsilon}(t))+\frac{1}{2}P(t)(X^{1,\varepsilon}(t))^2+\breve{Y}(t)+o(\varepsilon).
\end{equation}

(ii) Due to considering the general mean-field FBSDEs in our case,
the first- and second-order derivatives of the coefficients with respect to the measure, and some
corresponding estimates need to be handled carefully.  We argue that  the power of the second-order
derivative with respect to the measure ($\partial^2_{\nu\nu}$,  see Definition 2.3) are all the higher order of $\varepsilon$. So
only the mixed second-order derivative $\partial^2_{\nu a}$  is in force.  For this phenomenon, the
reader can refer to \cite{BLPR} Lemma 2.1 and note in section 6 \cite{BLM1} for more details. Besides,
we also note that the estimates given  by Buckdahn, Li, Ma  in Proposition 4.3 \cite{BLM1} originally for estimating the derivatives of the coefficients with respect to the measure is not enough for the recursive case with delay. Instead, we establish  a more general estimate, see Lemma 3.1.

The result of this paper can be summarized as follows:
Suppose $u^*(t)$ is the optimal control and $(X^*(t),Y^*(t),Z^*(t))$ is the optimal trajectory.
Consider the Hamiltonian
\begin{equation}\label{equ 5.1}
\begin{aligned}
&H(t,x,x',y,z,\nu,\mu,v;p,q,P)\\
&=pb(t,x,x',\nu,v)+q\sigma(t,x,x',\nu,v)
+\frac{1}{2}P\Big(\sigma(t,x,x',\nu,v)-\sigma(t,X^*(t),X^*(t-l), P_{X^*(t)}, u^*(t))\Big)^2\\
&\quad+f(t,x,x',y,z+p(\sigma(t,x,x',\nu,v)-\sigma(t,X^*(t),X^*(t-l), P_{X^*(t)}, u^*(t)),\mu,v),
\end{aligned}
\end{equation}
where $(t,x,x',y,z,\nu,\mu,v,p,q,P)\in[0,T]\times\mathbb{R}^4\times \mathcal{P}_2(\mathbb{R})\times \mathcal{P}_2(\mathbb{R}^2)\times U\times \mathbb{R}^3$.

If
$$\widehat{f}^*_{\mu_2}(t):= (\frac{\partial f}{\partial \mu})_2(t,\widehat{X}^*(t),\widehat{X}^*(t-l),\widehat{Y}^*(t),\widehat{Z}^*(t),
P_{(X^*(t),Y^*(t))},\widehat{u}^*(t);X^*(t), Y^*(t))>0,\  t\in[0,T], $$
$\widehat{P}\otimes P$-a.s.,
then there exist two pairs of stochastic
processes $(p,q), ((P,Q),$   $(P_1,Q_1))$, which are the solutions of the first- and second-order adjoint equations, separately,
such that  for given $l,T$ with $l<T\leq2l$,  the following  the following
inequality holds true:
\begin{equation}\label{equ 5.2}
\begin{aligned}
&H(t,X^*(t), X^*(t-l),Y^*(t),Z^*(t),P_{X^*(t)}, P_{(X^*(t),Y^*(t))},v,p(t),q(t),P(t))\\
&\geq H(t,X^*(t), X^*(t-l),Y^*(t),Z^*(t),P_{X^*(t)}, P_{(X^*(t),Y^*(t))},u^*(t),p(t),q(t),P(t)),\\
&\qquad\qquad\qquad\qquad\qquad\qquad\qquad\qquad\qquad\qquad\qquad \qquad \qquad \qquad t\in[0,T], \text{a.s.}\ \text{a.e.}
\end{aligned}
\end{equation}

There are two points needed to be stressed:

a) We  assume that the coefficients just depend on the law of the
state $(X_t,Y_t)$, $P_{(X_t,Y_t)}$, however, there is no essential change when the coefficients depend on the laws of the state and the delay,
$P_{(X_t,X_{t-l},Y_t,Y_{t-l})}$, meanwhile.

b) If $T\leq l$, our control problem reduces to the non delay case. Hence, in this paper
we present the case of one pointwise  delay, i.e., $l< T\leq 2l.$ But our method can
also be used to handle the pointwise delay case, i.e., $Nl< T\leq (N+1)l,\ N>1,\ N\in\mathbb{N}$, see Remark 3.7.

The paper is arranged as follows: Section 2 introduces some element theory of derivative in the Wasserstein Space and some usual spaces. In
Section 3,  we formulate the control problem, and establish the variational and adjoint equations. The expansion of $Y^\varepsilon$
is stated in Section 4. Section 5 is devoted to introducing the main result---Stochastic Maximum Principle. An illustration  is given
in Section 6. In the last Section we list some notations and show the expansion of $I_{22}$ accurately.

\section{{\protect \large {Preliminaries}}}

\subsection{Derivative in the Wasserstein Space}

 Let $\mathcal{P}_2(\mathbb{R}^d)$ denote the space of all probability measure $\nu$ on $(\mathbb{R}^d,\mathcal{B}(\mathbb{R}^d))$
 with finite second moment, which is endowed with the 2-Wassertein metric:
$$
\begin{aligned}
 W_2(\nu_1,\nu_2)&=\inf\Big\{
 (\int_{\mathbb{R}^d\times\mathbb{R}^d}|y_1-y_2|^2\rho(dy_1,dy_2))^\frac{1}{2},\ \rho\in \mathcal{P}_2(\mathbb{R}^{2d})\ \ \text{satisfying}\ \\
 &\qquad\qquad\qquad\qquad\rho(A\times \mathbb{R}^d)=\nu_1(A),\ \rho(\mathbb{R}^d\times B)=\nu_2(B),\ A,B\in \mathcal{B}(\mathbb{R})\Big\}.
 \end{aligned}
$$

Let us recall the differentiable of a function $h:\mathcal{P}_2(\mathbb{R}^d)\rightarrow\mathbb{R}$ with respect to a measure. The
idea is to identify a distribution $\mu\in\mathcal{P}_2(\mathbb{R}^d)$ with a random variable $\xi\in L^2(\mathcal{F};\mathbb{R}^d)$ so
that $\mu=P_{\xi}$.
\begin{definition}
A function $h:\mathcal{P}_2(\mathbb{R}^d)\rightarrow\mathbb{R}$ is called differentiable in $\mu_0\in\mathcal{P}_2(\mathbb{R}^d)$,
if the functional $  \tilde{h}: L^2(\mathcal{F};\mathbb{R}^d)\rightarrow \mathbb{R}$ defined $\tilde{h}(\xi):=h(P_\xi)$ is differentiable (in Fr\'{e}chet sense) in $\xi_0$ with
$P_{\xi_0}=\mu_0$.
\end{definition}
That means, for any $\zeta\in L^2( \mathcal{F} ;\mathbb{R}^d)$,
$$
\tilde{h}(\xi_0+\zeta)-\tilde{h}(\xi_0)=D\tilde{h}(\xi_0)(\zeta)+o(||\zeta||_{L_2}).$$
But $D\tilde{h}(\xi_0)$ is a continuous linear operator in $L^2(\mathcal{F} ;\mathbb{R}^d)$. From Riesz's Representation Theorem,
there exists a unique random variable $\eta_0\in L^2( \mathcal{F} ;\mathbb{R}^d)$ such that
$D\tilde{h}(\xi_0)(\zeta)=<\eta_0,\zeta>=E[\eta_0\zeta], \ \zeta\in L^2( \mathcal{F} ;\mathbb{R}^d)$, where
$<\cdot,\cdot>$ denotes the``dual product" on $L^2(\mathcal{F};\mathbb{R}^d)$. Later, Cardaliaguet proved that
$\eta_0$ can be written $\eta_0=\phi(\xi_0)$, where $\phi$ is a Borel measure function and depends only on the law of $\xi_0$,
but not $\xi_0$ itself. We define  $\partial_\nu h(P_{\xi_0};a):=\phi(a),\ a\in\mathbb{R}^d.$ Note that
$\partial_\nu h(P_{\xi_0};a)$ is only $P_{\xi_0}(da)$-a.e. uniquely determined. Moreover, we can
identify $D\tilde{h}(\xi_0)$ by
$$
D\tilde{h}(\xi_0)=\phi(\xi_0)=\partial_\nu h(P_{\xi_0};\xi_0).
$$

Since we shall consider the functions $h$, which are differentiable on the whole space $\mathcal{P}_2(\mathbb{R}^d)$, we assume that
$\widetilde{h}: L^2(\mathcal{F} ;\mathbb{R}^d)\rightarrow\mathbb{R}$ is Fr\'{e}chet differentiable at all
elements of $L^2(\mathcal{F} ;\mathbb{R}^d)$. Thereby, we have, for all $\xi\in L^2(\mathcal{F} ;\mathbb{R}^d)$ the derivative $\partial_\nu h(P_{\xi};a),\ a\in\mathbb{R}^d$ defined
$P_\xi$-a.e. Lemma 3.2 in \cite{CD2} allows to show that
if the Fr\'{e}chet derivative $\xi\mapsto D\tilde{h}(\xi)=\partial_\nu h(P_{\xi};\xi)$ is Lipschitz continuous
with a positive Lipschitz constant $L$, then  there exists for all $\xi\in L^2(\mathcal{F} ;\mathbb{R}^d)$ a $P_\xi$-version of
$\partial_\nu h(P_{\xi};\cdot):\mathbb{R}^d\rightarrow\mathbb{R}^d$  such that $\partial_\nu h(P_{\xi};\cdot)$ is Lipschitz
continuous with the same constant  $L$.

\begin{definition}
By $C_b^{1,1}(\mathcal{P}_2(\mathbb{R}^d))$ we denote the space of continuously differentiable function $h$ on $\mathcal{P}_2(\mathbb{R}^d)$
with the properties:

$\mathrm{i)}$ (boundness)  for all $\nu\in \mathcal{P}_2(\mathbb{R}^d),a\in\mathbb{R}^d,$  $\partial_\nu h(\nu;a)$ is bounded by a positive constant $L$;

$\mathrm{ii)}$(Lipschitz continuous) $\partial_\nu h(\cdot;\cdot)$ is Lipschitz continuous in $(\nu,a).$
\end{definition}

\begin{definition}
By $C_b^{2,1}(\mathcal{P}_2(\mathbb{R}^d))$ we denote the space of function $h\in C_b^{1,1}(\mathcal{P}_2(\mathbb{R}^d))$ with the properties:

$\mathrm {i)}$\ for each $\nu\in\mathcal{P}_2(\mathbb{R}^d), (\partial_\nu h)_j(\nu;\cdot)\in C_b^{1,1}(\mathbb{R}^d),\ 1\leq j\leq d,$

$\mathrm {ii)}$\ for each $a\in\mathbb{R}^d$, $(\partial_\nu h)_j(\cdot;a)\in C_b^{1,1}(\mathcal{P}_2(\mathbb{R}^d)),\ 1\leq j\leq d,$

$\mathrm {iii)}$ all the derivatives of $h$ up to order 2, $(\partial_\nu h, \partial^2_{\nu\nu}h, \partial^2_{\nu a}h)$, are
bounded and Lipschitz continuous with respect to all variables.
\end{definition}

Let us recall the second-order Taylor expansion of $h:\mathcal{P}_2(R^d)\rightarrow \mathbb{R}$, which plays an
important role in our analysis, see \cite{BLPR} for more details.
Let $(\widehat{\Omega},\widehat{\mathcal{F}},\widehat{P})$ be an independent copy of the probability
space $(\Omega,\mathcal{F},P)$. By $(\widehat{\xi}_0,\widehat{\zeta})$ defined on
$(\widehat{\Omega},\widehat{\mathcal{F}},\widehat{P})$ we denote the copy of the pair
$(\xi_0,\zeta)$ defined on $(\Omega,\mathcal{F},P)$, i.e., $\widehat{P}_{(\widehat{\xi}_0,\widehat{\zeta})}=P_{(\xi_0,\zeta)}$.
The expectation $\widehat{E}[\cdot]$
only works on the variables with ``hat". By  $(\cdot,\cdot)$ we denote
the inner product of $\mathbb{R}^d$. We
now consider the product space
$$
(\Omega\times \widehat{\Omega}, \mathcal{F}\otimes\widehat{\mathcal{F}}, P\otimes\widehat{P})
=(\Omega\times \Omega, \mathcal{F}\otimes\mathcal{F}, P\otimes P),
$$
and identify  $(\widehat{\xi}_0,\widehat{\zeta})(\widehat{\omega},\omega):=(\widehat{\xi}_0,\widehat{\zeta})(\widehat{\omega}),\
(\widehat{\omega},\omega)\in \widehat{\Omega}\times \Omega.$

Let $h\in C^{2,1}_b(\mathcal{P}_2(\mathbb{R}^d))$, one has
\begin{equation}\label{equ 1.1}
\begin{aligned}
&h(P_{\xi_0+\zeta})-h(P_{\xi_0})=\widetilde{h}(\xi_0+\zeta)-\widetilde{h}(\xi_0)\\
&=\int_0^1<D\widetilde{h}(\xi_0+\rho\zeta),\zeta>d\rho
=\int_0^1E[\partial_\nu h(P_{\xi_0+\rho\zeta}; \xi_0+\rho\zeta)\cdot\zeta]d\rho\\
&=E[\partial_\nu h(P_{\xi_0}; \xi_0)\cdot\zeta]
+\int_0^1E[(\partial_\nu h(P_{\xi_0+\rho\zeta}; \xi_0+\rho\zeta)
-\partial_\nu h(P_{\xi_0}; \xi_0))
\cdot\zeta]d\rho.
\end{aligned}
\end{equation}
However, it is clear that
\begin{equation}\label{equ 1.1-1}
\begin{aligned}
&\partial_\nu h(P_{\xi_0+\rho\zeta}; \xi_0+\rho\zeta)-\partial_\nu h(P_{\xi_0}; \xi_0)
=D\widetilde{\partial_\nu h}(\xi_0+\rho\xi)-D\widetilde{\partial_\nu h}(\xi_0)\\
&=\rho\int_0^1<D\widetilde{\partial_\nu h}(\xi_0+\lambda\rho\zeta;a),\zeta>d\lambda\Big|_{a=\xi_0+\rho\xi}
+\rho\int_0^1(\partial_a\partial_\nu h(P_{\xi_0};\xi_0+\lambda\rho\zeta), \zeta)d\lambda\\
&=\rho <D\widetilde{\partial_\nu h}(\xi_0;a),\zeta>\Big|_{a=\xi_0}
+\rho(\partial_a\partial_\nu h(P_{\xi_0};\xi_0), \zeta)+R_1(\rho,\zeta),
\end{aligned}
\end{equation}
where
$$
\begin{aligned}
R_1(\rho, \zeta):&=\rho\int_0^1<D\widetilde{\partial_\nu h}(\xi_0+\lambda\rho\zeta;a),\zeta>d\lambda\Big|_{a=\xi_0+\rho\xi}
-\rho <D\widetilde{\partial_\nu h}(\xi_0;a),\zeta>\Big|_{a=\xi_0}\\
&+\rho\int_0^1(\partial_a\partial_\nu h(P_{\xi_0};\xi_0+\lambda\rho\zeta), \zeta)d\lambda
-\rho(\partial_a\partial_\nu h(P_{\xi_0};\xi_0), \zeta).
\end{aligned}
$$
Due to
$$
D\widetilde{\partial_\nu h}(\xi_0;a)=\partial_\nu(\partial_\nu h(\cdot;a))(P_{\xi_0};\xi_0)
:=\partial^2_\nu h(P_{\xi_0};a,b)\Big|_{b=\xi_0},
$$
then we have
$$
\begin{aligned}
D\widetilde{\partial_\nu h}(\xi_0+\rho\xi)-D\widetilde{\partial_\nu h}(\xi_0)
=\rho \widehat{E}[\partial_\nu^2 h(P_{\xi_0};a,b)|_{b=\widehat{\xi}_0}\cdot \widehat{\zeta}]|_{a=\xi_0}
+\rho E[\partial_a\partial_\nu h(P_\xi;a)|_{a=\xi_0}\cdot\zeta]+R_1(\rho,\zeta).
\end{aligned}
$$
Accordingly, we obtain
\begin{equation}\label{equ 1.1-2}
\begin{aligned}
h(P_{\xi_0+\zeta})-h(P_{\xi_0})&=E[\partial_\nu h(P_{\xi_0}; \xi_0)\cdot\zeta]
+\frac{1}{2}E\widehat{E}\Big\{tr[\partial_\nu^2h(P_{\xi_0};\xi_0,\widehat{\xi}_0)\zeta\otimes\widehat{\zeta}]\Big\}\\
&\quad+\frac{1}{2}E\Big\{tr[\partial_a\partial_\nu h(P_{\xi_0};\xi_0)\zeta\otimes\zeta]\Big\}
+R(\zeta),
\end{aligned}
\end{equation}
where $R(\zeta)=\int_0^1E[R_1(\rho,\zeta)\cdot\zeta]d\rho$. Clearly, $|R(\zeta)|\leq CE[|\zeta|^3\wedge|\zeta|^2]$.

For arbitrary $\nu_0\in \mathcal{P}_2(\mathbb{R}^d)$ with $\nu_0=P_{\xi_0}$, and
$\zeta\in L^2(\mathcal{F};\mathbb{R}^d)$, we  call $D^2_\zeta h(\nu_0)$ the second
order derivative of $h$ at $\nu_0\in \mathcal{P}_2(\mathbb{R}^d)$ in the direction $\zeta$,
where $D^2_\zeta h(\nu_0)$ is given by
 $$
 \begin{aligned}
 D^2_\zeta h(\nu_0)=D^2_\zeta h(P_{\xi_0})=E\widehat{E}\Big\{tr[\partial_\nu^2h(P_{\xi_0};\xi_0,\widehat{\xi}_0)\zeta\otimes\widehat{\zeta}]\Big\}
 +E\Big\{tr[\partial_y\partial_\nu h(P_{\xi_0};\xi_0)\zeta\otimes\zeta]\Big\}.
\end{aligned}
$$
Consequently, (\ref{equ 1.1-2}) can be written as
$$
h(P_{\xi_0+\zeta})-h(P_{\xi_0})=D_\zeta h(P_{\xi_0})+\frac{1}{2}D^2_\zeta h(P_{\xi_0})+R(\zeta),
$$
where $|R(\zeta)|\leq CE[|\zeta|^3\wedge|\zeta|^2]=O(||\zeta||_2^2)$.

\subsection{Function spaces}

Let $(\Omega, \mathcal{F},P; \mathbb{F}=\{\mathcal{F}_t\}_{t\geq0})$ be a complete, filtered probability space, on which a
1-dimensional Brownian motion $W$ is defined. $T>0$ is a fixed time horizon, $l>0$ is a time delay parameter
and $\mathcal{F}^*$ is a sub-$\sigma$-field of $\mathcal{F}$
satisfying the following  two properties:

i) $\mathcal{F}^*$ is independent of the Brownian motion $W$;

ii) $\mathcal{P}_2(\mathbb{R}^d)=\{P_\xi,\xi\in L^2(\mathcal{F}^*;\mathbb{R^d})\}$.\\
Let $\mathcal{F}_t=\sigma\{B_s,0\leq s\leq t\}\vee \mathcal{F}^*\vee \mathcal{N}$,
where $\mathcal{N}$ is the set of all $P$-null subset. By $U^\intercal$ we denote
the transpose of any vector or matrix $U$.

Let $\ell\geq1$, we introduce the following spaces, which are used frequently later:

$\bullet$ $L^\ell(\mathcal{F}_t;\mathbb{R}^d)$ denotes the set of
$\mathbb{R}^d$-valued, $\mathcal{F}_t$-measurable random variable $\xi$ with
$$
||\xi||_{L^\ell}:=(E[|\xi|^\ell])^\frac{1}{\ell}<+\infty;
$$

$\bullet$ $\mathcal{S}_{\mathbb{F}}^\ell(t,T;\mathbb{R}^d)$ denotes the set of $\mathbb{R}^d$-valued,  $\mathcal{F}_s$-predictable processes $\psi$
on $[t,T]$, such that
$$
||\psi||_{\mathcal{S}^\ell_{\mathbb{F}}}=\Big\{E[\sup_{s\in[t,T]}|\psi(s)|^\ell]\Big\}^{\frac{1}{\ell}}<+\infty;
$$

$\bullet$ $\mathcal{H}_{\mathbb{F}}^\ell(t,T;\mathbb{R}^d)$ denotes the set of $\mathbb{R}^d$-valued,  $\mathcal{F}_s$-progressively measurable processes $\psi$
on $[t,T]$, such that
$$||\psi||_{\mathcal{H}^\ell_{\mathbb{F}}}=\Big\{E[(\int_t^T|\psi(s)|^2ds)^{\frac{\ell}{2}} ]\Big\}^{\frac{1}{\ell}}<+\infty.$$

\section{{\protect \large {Formulation}}}

Suppose $U$ is a subset of $\mathbb{R}$, not necessarily convex. Let $\mathcal{U}_{ad}$ be the set of all admissible controls, that is,
the set of all  $\mathcal{F}_t$-adapted stochastic processes $v:[0,T]\times \Omega\rightarrow U$ with for all $\beta\geq1$,
$$E[\sup_{t\in[0,T]}|v(t)|^\beta dt]<+\infty.$$

Let the mappings
$$
\begin{aligned}
&(b,\sigma)(t,x,x',\nu,v):[0,T]\times \mathbb{R}\times \mathbb{R}\times \mathcal{P}_2(\mathbb{R})\times U\rightarrow \mathbb{R}, \ \Phi(x,\nu): \mathbb{R}\times\mathcal{P}_2(\mathbb{R})\rightarrow \mathbb{R},\\
&f(t,x,x',y,z,\mu,v):[0,T]\times \mathbb{R}\times \mathbb{R}\times \mathbb{R}\times  \mathbb{R}\times \mathcal{P}_2(\mathbb{R}^2)\times U\rightarrow \mathbb{R},\\
\end{aligned}
$$
satisfy the following assumptions:\\

\underline{(H3.1)} $b,\sigma,\Phi$ are bounded by $C(1+|x|+|x'|+(\int_{\mathbb{R}}x^2\nu(dx))^\frac{1}{2}+|v|)$,
 $f$ is bounded by $C(1+|x|+|x'|+|y|+|z|+(\int_{\mathbb{R}}x^2\nu(dx))^\frac{1}{2}+|v|)$, $b,\sigma,f$ and all
 the derivatives of $b,\sigma,f$ up to order 2 are Lipschitz continuous in $v$.\\

\underline{(H3.2)} For each $(t,v)\in[0,T]\times U,$  $b,\sigma\in C_b^{1,1}(\mathbb{R}^2\times \mathcal{P}_2(\mathbb{R})),\
 f\in C_b^{1,1}(\mathbb{R}^4\times \mathcal{P}_2(\mathbb{R}^2)),\
 \Phi\in C_b^{1,1}(\mathbb{R}\times \mathcal{P}_2(\mathbb{R}))$, i.e.,

i)\ For each $(t,x,x',y,z,v)\in [0,T]\times \mathbb{R}^4\times U$,
$(b,\sigma)(t,x,x',\cdot,v)$ belong to $C_b^{1,1}(\mathcal{P}_2(\mathbb{R})),$
$\Phi(x,\cdot)$ belongs to $C_b^{1,1}(\mathcal{P}_2(\mathbb{R})),$
$f(t,x,x',y,z,\cdot,v)$ belongs to $C_b^{1,1}(\mathcal{P}_2(\mathbb{R}^2))$;

ii)\ For each $(t,\nu,\mu,v)\in[0,T]\times \mathcal{P}_2(\mathbb{R})\times \mathcal{P}_2(\mathbb{R}^2)\times U$,
$(b,\sigma)(t,\cdot,\cdot,\nu,v)$ belong to $C_b^{1}(\mathbb{R}^2)$,
$\Phi (\cdot,\nu )$ belongs to $C_b^{1}(\mathbb{R}^2)$,
and
$f(t,\cdot,\cdot,\cdot,\cdot,\mu,v)$ belongs to $C_b^{1}(\mathbb{R}^4)$;

iii)\ The derivatives  of $b,\sigma, \Phi,f$ up to order $1$ are Lipschitz continuous and bounded.\\

\underline{(H3.3)} For each $(t,v)\in[0,T]\times U,$ $(b,\sigma)\in C_b^{2,1}(\mathbb{R}^2\times \mathcal{P}_2(\mathbb{R})),$
$f\in C_b^{2,1}(\mathbb{R}^4\times \mathcal{P}_2(\mathbb{R}^2))$,  $\Phi\in C_b^{2,1}(\mathbb{R}\times \mathcal{P}_2(\mathbb{R}))$,
i.e., $b,\sigma\in C_b^{1,1}(\mathbb{R}^2\times \mathcal{P}_2(\mathbb{R})),\
 f\in C_b^{1,1}(\mathbb{R}^4\times \mathcal{P}_2(\mathbb{R}^2)),\
 \Phi\in C_b^{1,1}(\mathbb{R}\times \mathcal{P}_2(\mathbb{R}))$ satisfy:\\
\indent i)\ For each $(t,v)\in[0,T]\times U$, $\partial_k\phi(t,\cdot,\cdot,\cdot,v)$ belongs to $C_b^{1,1}(\mathbb{R}\times \mathcal{P}_2(\mathbb{R})),$  $\phi=b,\sigma,\ k=x,x',$
$\partial_x\Phi(\cdot,\cdot)$ belongs to $C_b^{1,1}(\mathbb{R}\times \mathcal{P}_2(\mathbb{R})),$
$\partial_kf(t,\cdot,\cdot,\cdot,\cdot,\cdot,v)$ belongs to  $C_b^{1,1}(\mathbb{R}^4\times \mathcal{P}_2(\mathbb{R}^2)),\ k=x,x',y,z$;

ii) For each $(t,v)\in[0,T]\times U,$ $\partial_\nu\phi(t,\cdot,\cdot,v;\cdot)$ belongs to $C_b^{1,1}(\mathbb{R}^2\times \mathcal{P}_2(\mathbb{R})\times \mathbb{R}), \phi=b,\sigma,$ $\partial_\nu\Phi(\cdot,\cdot;\cdot)$ belongs to
$C_b^{1,1}(\mathbb{R}^2\times \mathcal{P}_2(\mathbb{R})\times \mathbb{R})$,
$(\partial_{\mu }f)_j(t,\cdot,\cdot,\cdot,\cdot,\cdot,v;\cdot,\cdot)$ belongs to $C_b^{1,1}(\mathbb{R}^4\times \mathcal{P}_2(\mathbb{R}^2)\times \mathbb{R}^2),\ j=1,2$;

iii) All the derivatives of $b,\sigma, \Phi,f$ up to order $2$ are Lipschitz continuous and bounded.\\

We consider the system (\ref{equ 3.1})
 and the cost functional
\begin{equation}\label{3.1-1}
\begin{aligned}
J(v)=Y^v(0).
\end{aligned}
\end{equation}
The purpose of this paper is to investigate the  necessary condition of the optimal control $u^*$.

In subsequent sections, if no confusion, by $u^*$ and $(X^*,Y^*,Z^*)$ we always denote the optimal control and the corresponding trajectory,
by $\rho:(0,+\infty)\rightarrow(0,+\infty)$ denote a measurable function with property: $\rho(\varepsilon)\rightarrow0$ as $\varepsilon\downarrow0$, as well as
by $C$ denote a positive constant. $\rho$ and $C$ may be different from line to line.

\subsection{Variational equations}
In this subsection we investigate the differentiation of the state process $X^\varepsilon$ by the spike variation
 method, which we now introduce.

Let $\varepsilon>0$ and $E_\varepsilon$ be a Borel subset  of $[0,T]$ with the Lebesgue measure $\lambda(E_\varepsilon)=\varepsilon.$
Suppose $v\in \mathcal{U}_{ad}$, the spike variation of $u^*$ is described as follows:
for $t\in[0,T]$,
\begin{equation}\label{equ 3.1-1}
v^\varepsilon(t)=v(t)\mathbbm{1}_{E_\varepsilon}(t)+u^*(t)\mathbbm{1}_{(E_\varepsilon)^C}(t).
\end{equation}
Accordingly, by $(X^\varepsilon,Y^\varepsilon,Z^\varepsilon):=(X^{v^\varepsilon},Y^{v^\varepsilon},Z^{v^\varepsilon})$
we denote the solution of the system (\ref{equ 3.1}) with $v^\varepsilon$.

Let  $ (\overline{\Omega} , \overline{\mathcal{F}},\overline{P}),
(\widehat{\Omega} , \widehat{\mathcal{F}},\widehat{P}),\ (\widetilde{\Omega} , \widetilde{\mathcal{F}},\widetilde{P})$
be three complete probability spaces.
By $W,\ E,\ \xi,\ X(\cdot)$ with ``bar", ``hat", ``tilde" we denote the Brown motion, expectation, random variable, stochastic
process defined on the spaces $(\overline{\Omega} , \overline{\mathcal{F}},\overline{P}),
(\widehat{\Omega} , \widehat{\mathcal{F}},\widehat{P}),\ (\widetilde{\Omega} , \widetilde{\mathcal{F}},\widetilde{P})$, respectively.
Note that the spaces $(\Omega , \mathcal{F},P),(\overline{\Omega} , \overline{\mathcal{F}},\overline{P}),
(\widehat{\Omega}, \widehat{\mathcal{F}},\widehat{P})$ and $(\widetilde{\Omega}, \widetilde{\mathcal{F}},\widetilde{P})$ are independent.\\
For $  \phi=b, \sigma, b_x, \sigma_x, b_{x'}, \sigma_{x'}, \psi=b, \sigma$, define
\begin{equation}\label{equ 3.1-1-1}
\begin{aligned}
&\Delta\phi(t):=\phi(t,X^*(t),X^*(t-l),P_{X^*(t)},v(t))-\phi(t,X^*(t),X^*(t-l),P_{X^*(t)},u^*(t)),\\
&(\psi_x,\psi_{xx})(t):=(\frac{\partial\psi}{\partial x} ,\frac{\partial^2\psi}{\partial x^2})(t,X^*(t),X^*(t-l),P_{X^*(t)},u^*(t)),\\
&(\widehat{\psi}_\nu,\widehat{\psi}_{\nu a})(t):=(\frac{\partial\psi}{\partial \nu} ,\frac{\partial^2\psi}{\partial \nu \partial a})(t,X^*(t),X^*(t-l),P_{X^*(t)},u^*(t);\widehat{X}^*(t)),\\
&(\widehat{\psi}^*_\nu,\widehat{\psi}^*_{\nu a}) (t):=(\frac{\partial\phi}{\partial\nu},\frac{\partial^2\phi}{\partial\nu \partial a})
(t,\widehat{X}^*(t),\widehat{X}^*(t-l),P_{X^*(t)},\widehat{u}^*(t);X^*(t)),\\
&\widetilde{\widehat{\psi}}_{\nu \nu}(t):=\frac{\partial^2\psi}{\partial \nu^2}(t,X^*(t),X^*(t-l),P_{X^*(t)},u^*(t);\widehat {X}^*(t), \widetilde{X}^*(t)).
\end{aligned}
\end{equation}

Inspired by the work of Peng \cite{Peng1}, when $\sigma$ depending on control and the control set $U$ being
not necessarily convex, for each $\varepsilon>0,$ we can find two stochastic processes $X^{1,\varepsilon}, X^{2,\varepsilon}$
such that
\begin{equation}\label{equ 3.1-2}
\begin{aligned}
&\mathrm{i)}\   X^\varepsilon(t)-X^*(t)-X^{1,\varepsilon}(t)=O(\varepsilon),\\
&\mathrm{ii)}\  X^\varepsilon(t)-X^*(t)-X^{1,\varepsilon}(t)-X^{2,\varepsilon}(t)=o(\varepsilon).
\end{aligned}
\end{equation}
In our case, it is easy to check that $X^{1,\varepsilon}, X^{2,\varepsilon}$ satisfy the following two
variational equations.
The first-order variational equation is
\begin{equation}\label{equ 3.2}
  \left\{
 \begin{aligned}
dX^{1,\varepsilon}(t)&=\Big\{ b_x(t)X^{1,\varepsilon}(t)+b_{x'}(t)X^{1,\varepsilon}(t-l)
          +\widehat{E}[\widehat{b}_\nu(t)\widehat{X}^{1,\varepsilon}(t)]+\Delta b(t)\mathbbm{1}_{E_\varepsilon}(t)\Big\}dt\\
        &\quad+\Big\{ \sigma_x(t)X^{1,\varepsilon}(t)+\sigma_{x'}(t)X^{1,\varepsilon}(t-l)
        +\widehat{E}[\widehat{\sigma}_\nu(t)\widehat{X}^{1,\varepsilon}(t)]\\
        &\qquad\qquad\qquad\qquad\qquad\qquad\qquad\qquad\qquad\       +\Delta \sigma(t)\mathbbm{1}_{E_\varepsilon}(t)\Big\}dW(t),\ t\in[0,T],\\
X^{1,\varepsilon}(t)&=0,\ t\in[-l,0],
\end{aligned}
\right.
\end{equation}
and the second-order variational equation, which is a linear mean-field forward SDE with delay, has the
following form:
\begin{equation}\label{equ 3.3}
\begin{aligned}
dX^{2,\varepsilon}(t)&=\Big\{ b_x(t)X^{2,\varepsilon}(t)+b_{x'}(t)X^{2,\varepsilon}(t-l)
                  +\widehat{E}[\widehat{b}_\nu(t)\widehat{X}^{2,\varepsilon}(t)]
                 +b_{xx'}(t)X^{1,
                 \varepsilon}(t)X^{1,\varepsilon}
                 (t-l)\\
           &\quad +\frac{1}{2}\Big(b_{xx}(t)(X^{1,\varepsilon}(t))^2+\widehat{E}[\widehat{b}_{\nu a}(t)(\widehat{X}^{1,\varepsilon}(t))^2]+b_{x'x'}(t)(X^{1,\varepsilon}(t-l))^2\Big)\\
           &\quad+\Big(\Delta b_x(t)X^{1,\varepsilon}(t)+\Delta b_{x'}(t)X^{1,\varepsilon}(t-l)
       +\widehat{E}[\Delta \widehat{b}_\nu(t)\widehat{X}^{1,\varepsilon}(t)]\Big)\mathbbm{1}_{E_\varepsilon}(t) \Big\}dt \\
\end{aligned}
\end{equation}
$$
\begin{aligned}
&\quad+\Big\{ \sigma_x(t)X^{2,\varepsilon}(t)+\sigma_{x'}(t)X^{2,\varepsilon}(t-l)
                  +\widehat{E}[\widehat{\sigma}_\nu(t)\widehat{X}^{2,\varepsilon}(t)]
                 +\sigma_{xx'}(t)X^{1,\varepsilon}(t)X^{1,\varepsilon}(t-l)\\
           &\quad +\frac{1}{2}\Big(\sigma_{xx}(t)(X^{1,\varepsilon}(t))^2+\widehat{E}[\widehat{\sigma}_{\nu a}(t)(\widehat{X}^{1,\varepsilon}(t))^2]+\sigma_{x'x'}(t)(X^{1,\varepsilon}(t-l))^2\Big)\\
           &\quad+\Big(\Delta \sigma_x(t)X^{1,\varepsilon}(t)+\Delta \sigma_{x'}(t)X^{1,\varepsilon}(t-l)
        +\widehat{E}[\Delta \widehat{\sigma}_\nu(t)\widehat{X}^{1,\varepsilon}(t)]\Big)\mathbbm{1}_{E_\varepsilon}(t) \Big\}dW_t,\ t\in[0,T],\\
X^{2,\varepsilon}(t)&=0,\ t\in[-l,0].
\end{aligned}
$$

Under the assumptions (H3.1)-(H3.3), the equation (\ref{equ 3.2}) and  equation (\ref{equ 3.3}) exist
unique solutions $X^{1,\varepsilon}=(X^{1,\varepsilon}(t))_{t\in[-l,T]}\in \mathcal{S}^\beta_{\mathbb{F}}(-l,T;\mathbb{R})$ ($\mathcal{S}^\beta_{\mathbb{F}}(-l,T)$, for short) and
$X^{2,\varepsilon}=(X^{2,\varepsilon}(t))_{t\in[-l,T]}\in \mathcal{S}^\beta_{\mathbb{F}}(-l,T)$
with $\beta\geq 2,$
respectively, see
Shen, Meng and Shi \cite{SMS}.

Let us  first introduce a useful estimate in studying stochastic
maximum principle especially driven by general mean-field forward-backward control systems with delay.

\begin{lemma}\label{le 3.1}
Under the assumptions (H3.1), (H3.2), let $(\overline{\Omega},\overline{\mathcal{F}},\overline{P})$
be an intermediate probability space and independent of space $(\Omega, \mathcal{F},P)$, and
 let $(\Psi_1(\omega,\overline{\omega},t))_{t\in[-l,T]}, (\Psi_2(\overline{\omega},t))_{t\in[-l,T]}$ be two stochastic
 processes defined on the product space $(\Omega\times\overline{\Omega},\mathcal{F}\times\overline{\mathcal{F}},P\otimes \overline{P})$
and the space $(\overline{\Omega},\overline{\mathcal{F}},\overline{P})$, respectively. Moreover, assume
$\Psi_i,\ i=1,2$
satisfies the
following properties:\\
\indent$\mathrm{i)}$\ There exists a constant $C>0$, such that for $t\in[-l,T],$ $|\Psi_1(\omega,\overline{\omega},t)|\leq C, P\otimes \overline{P}$-a.s.\\
\indent$\mathrm{ii)}$\ For $\beta\geq1$,\ $\overline{E}[\sup_{t\in[-l,T]}|\Psi_2(\overline{\omega},t)|^{2\beta}]\leq C_\beta$.\\
Then
\begin{equation}\label{equ 3.3-1}
E\Big[\int_{-l}^T|\overline{E}[\Psi_1(\omega,\overline{\omega},t)\Psi_2(\overline{\omega},t)\overline{X}^{1,\varepsilon}(t)]|^4 dt\Big]\leq \varepsilon^2\rho(\varepsilon).
\end{equation}
\end{lemma}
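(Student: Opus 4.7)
The plan is to combine a Duhamel-type representation of the linear variational equation (\ref{equ 3.2}) with $L^{p}$--moment bounds on $\overline{X}^{1,\varepsilon}$ and Cauchy--Schwarz on the inner expectation $\overline{E}$. First I would establish the standard bound $\overline{E}[\sup_{t}|\overline{X}^{1,\varepsilon}(t)|^{2k}]\leq C_{k}\varepsilon^{k}$ for every $k\geq 1$, via Gronwall and BDG applied to (\ref{equ 3.2}) and the boundedness of the coefficients in (H3.1)--(H3.2), using $|E_{\varepsilon}|=\varepsilon$. Letting $\Phi(t,s)$ be the fundamental solution of the associated homogeneous mean-field delay SDE (its moments are bounded of every order after decoupling the $\widehat{E}$--term by taking expectations and iterating over $[0,l]$, $[l,T]$), I would decompose $\overline{X}^{1,\varepsilon}=R^{\varepsilon}+S^{\varepsilon}$ with $R^{\varepsilon}(t)=\int_{0}^{t}\Phi(t,s)\Delta b(s)\mathbbm{1}_{E_{\varepsilon}}(s)\,ds$ (drift-spike piece) and $S^{\varepsilon}(t)=\int_{0}^{t}\Phi(t,s)\Delta\sigma(s)\mathbbm{1}_{E_{\varepsilon}}(s)\,d\overline{W}(s)$ (martingale-spike piece). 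The drift piece enjoys the sharper bound $\overline{E}[\sup_{t}|R^{\varepsilon}(t)|^{2k}]\leq C_{k}\varepsilon^{2k}$ because its source is bounded and supported on $E_{\varepsilon}$.

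For the drift piece, Cauchy--Schwarz on $\overline{E}$ together with (i)--(ii) gives
$$|\overline{E}[\Psi_{1}\Psi_{2}R^{\varepsilon}(t)]|^{4}\leq C\bigl(\overline{E}[\Psi_{2}^{2}]\bigr)^{2}\bigl(\overline{E}[|R^{\varepsilon}(t)|^{2}]\bigr)^{2}\leq C\varepsilon^{4};$$
Fubini (since $R^{\varepsilon},\Psi_{2}$ are $\overline{\omega}$-only) and integration in $t\in[-l,T]$ reduce this piece's contribution to $O(\varepsilon^{4})$, safely below $\varepsilon^{2}\rho(\varepsilon)$. For the martingale piece, however, the same elementary estimate only yields $|\overline{E}[\Psi_{1}\Psi_{2}S^{\varepsilon}(t)]|^{4}\leq C\varepsilon^{2}$---exactly the threshold that must be beaten.

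The main obstacle is thus extracting the extra $\rho(\varepsilon)\to 0$ factor from the martingale piece. To achieve this, I would approximate $\Psi_{2}$ in $L^{2\beta}(\overline{P})$---exploiting the all-orders integrability in (ii)---by surrogates $\Psi_{2}^{\eta}$ for which the inner expectation $\overline{E}[\Psi_{1}\Psi_{2}^{\eta}S^{\varepsilon}]$ acquires an extra $\sqrt{\varepsilon}$ factor, improving to $O(\varepsilon)$, via an It\^{o}-isometry / martingale-orthogonality argument; the residual is bounded by $C\|\Psi_{2}-\Psi_{2}^{\eta}\|_{L^{2}}\cdot\sqrt{\overline{E}[|S^{\varepsilon}|^{2}]}=o(1)\cdot O(\sqrt{\varepsilon})$. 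Choosing $\eta=\eta(\varepsilon)\downarrow 0$ slowly enough then produces $\overline{E}[\Psi_{1}\Psi_{2}S^{\varepsilon}(t)]=o(\sqrt{\varepsilon})$ pointwise, hence $|\overline{E}[\cdots]|^{4}=o(\varepsilon^{2})$; the uniform $L^{1}(dt\otimes dP)$ domination by $C(\overline{E}[\Psi_{2}^{2}])^{2}$ from (ii) then lifts this pointwise estimate to the integrated bound by dominated convergence. The delicate technical point is making the approximation rate uniform in $(\omega,t)$ so that the resulting $\rho(\varepsilon)$ survives the outer integration.
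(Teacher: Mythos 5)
Your plan correctly isolates the crux (the naive Cauchy--Schwarz bound on the stochastic-integral piece lands exactly at the threshold $C\varepsilon^{2}$, and an orthogonality argument must supply the extra $o(1)$ factor), but as written it has a genuine gap that precedes that difficulty: the decomposition $X^{1,\varepsilon}=R^{\varepsilon}+S^{\varepsilon}$ is incomplete. The first variational equation (\ref{equ 3.2}) contains, besides the spike sources $\Delta b\,\mathbbm{1}_{E_{\varepsilon}}$ and $\Delta\sigma\,\mathbbm{1}_{E_{\varepsilon}}$, the mean-field sources $\widehat{E}[\widehat{b}_{\nu}(t)\widehat{X}^{1,\varepsilon}(t)]$, $\widehat{E}[\widehat{\sigma}_{\nu}(t)\widehat{X}^{1,\varepsilon}(t)]$ and the delay terms $b_{x'}(t)X^{1,\varepsilon}(t-l)$, $\sigma_{x'}(t)X^{1,\varepsilon}(t-l)$. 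These are random in $\omega$, are not supported on $E_{\varepsilon}$, and cannot be folded into a two-parameter fundamental solution $\Phi(t,s)$ by ``taking expectations'': the one sitting in the diffusion coefficient feeds a non-spike contribution into the martingale part whose It\^{o}-isometry bound is again exactly $O(\varepsilon^{2})$, i.e.\ at the threshold you need to beat. The paper keeps these as inhomogeneous terms, derives a closed inequality of the form $\int_{0}^{r}E|\overline{E}[\Psi_{1}\Psi_{2}\overline{X}^{1,\varepsilon}(t)]|^{4}dt\leq\varepsilon^{2}\rho(\varepsilon)+C\int_{0}^{r}\int_{0}^{t}E[|\widehat{E}[\widehat{b}_{\nu}\widehat{X}^{1,\varepsilon}]|^{4}+|\widehat{E}[\widehat{\sigma}_{\nu}\widehat{X}^{1,\varepsilon}]|^{4}]\,ds\,dt$, and then closes it by specializing $\Psi_{1}=\widehat{b}_{\nu},\widehat{\sigma}_{\nu}$, $\Psi_{2}=1$ and applying Gronwall; this self-referential step is the very reason the lemma is stated for arbitrary $\Psi_{1},\Psi_{2}$, and it is absent from your plan. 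The delay terms are handled by first obtaining the explicit representation of $X^{1,\varepsilon}$ on $[0,l]$, substituting it for $X^{1,\varepsilon}(t-l)$ on $[l,T]$, exchanging the order of integration, and repeating the argument there.

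For the genuinely spike-supported martingale piece, your surrogate construction $\Psi_{2}^{\eta}$ is not specified and, as you yourself note, an $\varepsilon$-dependent choice $\eta(\varepsilon)$ jeopardizes the uniformity needed to integrate the pointwise estimate. The paper's concrete device avoids this: for each fixed $\omega$ and $t$ it applies the martingale representation theorem on $(\overline{\Omega},\overline{P})$ to the whole product $\Psi_{1}\Psi_{2}\overline{\rho}^{1}(t)$ (including $\Psi_{1}$ and the integrating factor $\overline{\rho}^{1}$), producing a kernel $\overline{\gamma}^{1}_{t}$ whose moments are bounded uniformly in $(t,\omega)$ and which does not depend on $\varepsilon$; It\^{o} isometry then converts the pairing with the stochastic integral into $\overline{E}[\int_{0}^{t}\overline{\gamma}^{1}_{t}(s)\overline{\eta}^{1}(s)(\cdots)ds]$, whose spike part is bounded by $C\varepsilon\{\overline{E}[(\int_{0}^{t}|\overline{\gamma}^{1}_{t}(s)|^{2}\mathbbm{1}_{E_{\varepsilon}}(s)ds)^{2}]\}^{1/2}$, and dominated convergence applied to the fixed family $\{\overline{\gamma}^{1}_{t}\}$ supplies the $\rho(\varepsilon)$. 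You would need to replace your unspecified approximation step with an argument of this kind, and to add the Gronwall closure for the mean-field and delay sources, before the proof is complete.
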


\begin{proof}
From the definitions of $\hat{b}_\nu(t),\hat{\sigma}_\nu(t)$ (see (\ref{equ 3.1-1-1})), for
the convenience of edit we denote, for $h=b,\sigma$,
\begin{equation}\label{equ 3.3-2}
\overline{\widehat{h}}_{\nu}(t)=\frac{\partial h}{\partial \nu}(t,\overline{X}^*(t),\overline{X}^*(t-l),P_{X^*(t)},\overline{u}^*(t);\widehat{X}^*(t)).
\end{equation}

Recall in this paper we consider the case of one pointwise delay, i.e., $l< T\leq 2l$.
From the property of integral interval additivity,  it is easy to see
$$
\begin{aligned}
&E\Big[\int_{-l}^T|\overline{E}[\Psi_1(\omega,\overline{\omega},t)\Psi_2(\overline{\omega},t)\overline{X}^{1,\varepsilon}(t)]|^4 dt\Big]
=E\Big[\int_{-l}^0|\overline{E}[\Psi_1(\omega,\overline{\omega},t)\Psi_2(\overline{\omega},t)\overline{X}^{1,\varepsilon}(t)]|^4 dt\Big]\\
&+E\Big[\int_{0}^l|\overline{E}[\Psi_1(\omega,\overline{\omega},t)\Psi_2(\overline{\omega},t)\overline{X}^{1,\varepsilon}(t)]|^4 dt\Big]
+E\Big[\int_{l}^T|\overline{E}[\Psi_1(\omega,\overline{\omega},t)\Psi_2(\overline{\omega},t)\overline{X}^{1,\varepsilon}(t)]|^4 dt\Big].
\end{aligned}
$$
Hence, in what follows we will prove (\ref{equ 3.3-1}) in three cases.

\textbf{Case 1}\  If $-l\leq t<0,$ $X^{1,\varepsilon}(t)\equiv0$. Obviously,
\begin{equation}\label{equ 3.3-3}
E\Big[\int_{-l}^0|\overline{E}[\Psi_1(\omega,\overline{\omega},t)\Psi_2(\overline{\omega},t)\overline{X}^{1,\varepsilon}(t)]|^4 dt\Big]\leq \varepsilon^2\rho(\varepsilon).
\end{equation}

\textbf{Case 2}\   If $0\leq t< l$, $X^{1,\varepsilon}(t-l)\equiv0$ which means that the delay terms will
vanish. Hence, the equation (\ref{equ 3.1}) becomes a mean-field FBSDE without delay, and the corresponding
first-order variational equation (\ref{equ 3.2}) can be rewritten as
\begin{equation}\label{equ 3.3-4}
  \left\{
 \begin{aligned}
dX^{1,\varepsilon}(t)&=\Big\{ b_x(t)X^{1,\varepsilon}(t)
          +\widehat{E}[\widehat{b}_\nu(t)\widehat{X}^{1,\varepsilon}(t)]+\Delta b(t)\mathbbm{1}_{E_\varepsilon}(t)\Big\}dt\\
        &\quad+\Big\{ \sigma_x(t)X^{1,\varepsilon}(t)
        +\widehat{E}[\widehat{\sigma}_\nu(t)\widehat{X}^{1,\varepsilon}(t)]
      +\Delta \sigma(t)\mathbbm{1}_{E_\varepsilon}(t)\Big\}dW(t),\ t\in[0,T],\\
X^{1,\varepsilon}(t)&=0,\ t\in[-l,0].
\end{aligned}
\right.
\end{equation}

Let us consider the following SDE:
\begin{equation}\label{equ 3.3-5}
  \left\{
 \begin{aligned}
d\eta^1(t)&=-(b_x(t)-|\sigma_x(t)|^2)\eta^1(t)dt-\sigma_x(t)\eta^1(t)dW(t),\ t\in[0,l],\\
\eta^1(0)&=1,\ \eta^1(t)=0,\ t\in[-l,0).
\end{aligned}
\right.
\end{equation}
Obviously,
\begin{equation}\label{equ 3.3-5}
  \left\{
 \begin{aligned}
\eta^1(t)&=\exp\Big\{-\int_0^t\sigma_x(s)dW(s)
-\int_0^t(b_x(s)-\frac{1}{2}|\sigma_x(s)|^2)ds\Big\},\ t\in[0,l],\\
\eta^1(t)&=0,\ t\in[-l,0).
\end{aligned}
\right.
\end{equation}
We define
$\rho^1(t)=(\eta^1(t))^{-1}=\exp\Big\{
\int_0^t\sigma_x(s)dW(s)+\int_0^t(b_x(s)-\frac{1}{2}|\sigma_x(s)|^2)ds\Big\},\ t\in[0,l]$
and $\rho^1(t)=0,\ t\in[-l,0).$

Thanks to the boundness of $b_x,\ \sigma_x$, one can check that for $\beta\geq1$, there
exists a positive constant $C_\beta$ such that
\begin{equation}\label{equ 3.3-6}
E\Big[\sup_{t\in[-l,l]}(|\rho^1(t)|^\beta+|\eta^1(t)|^\beta)\Big]\leq C_\beta.
\end{equation}
Applying It\^{o}'s formula to $\eta^1(t)X^{1,\varepsilon}(t)$ over $[0,l]$, we have,
for $t\in[0,l]$,
\begin{equation}\label{equ 3.3-7}
\begin{aligned}
X^{1,\varepsilon}(t)=\rho^1(t)\Delta_1(t)+\Delta_2(t),
\end{aligned}
\end{equation}
where
$$
\begin{aligned}
\Delta_1(t)&:=\int_0^t\eta^1(s)(\widehat{E}[\widehat{\sigma}_\nu(s)\widehat{X}^{1,\varepsilon}(s)]
+\Delta\sigma(s)\mathbbm{1}_{E_\varepsilon}(s))dW(s),\\
\Delta_2(t)&:=\rho^1(t)\int_0^t\eta^1(s)(\widehat{E}[\widehat{b}_\nu(s)\widehat{X}^{1,\varepsilon}(s)]
+\Delta b(s)\mathbbm{1}_{E_\varepsilon}(s))ds\\
&\quad-\rho^1(t)\int_0^t\eta^1(s)\sigma_x(s)(\widehat{E}[\widehat{\sigma}_\nu(s)\widehat{X}^{1,\varepsilon}(s)]
+\Delta \sigma(s)\mathbbm{1}_{E_\varepsilon}(s))ds\Big\}.
\end{aligned}
$$
Therefore,
$$
\begin{aligned}
&\overline{E}[\Psi_1(\omega,\overline{\omega},t)\Psi_2(\overline{\omega},t)\overline{X}^{1,\varepsilon}(t)]\\
&=\overline{E}[\Psi_1(\omega,\overline{\omega},t)\Psi_2(\overline{\omega},t)\overline{\rho}^1(t)\overline{\Delta}_1(t)]
+\overline{E}[\Psi_1(\omega,\overline{\omega},t)\Psi_2(\overline{\omega},t)\overline{\Delta}_2(t)]
:=\Theta_1(t,\varepsilon)+\Theta_2(t,\varepsilon).
\end{aligned}
$$

Next we shall analyse $\Theta_1(t,\varepsilon)$ and $\Theta_2(t,\varepsilon)$ one by one.

First, for each $\omega\in \Omega$, we consider the process
$\Psi_1^\omega(\overline{\omega},t)\Psi_2(\overline{\omega},t)\overline{\rho}^1(t):=
\Psi_1(\omega,\overline{\omega},t)\Psi_2(\overline{\omega},t)\overline{\rho}^1(t)$.
For each given $t\in[0,l]$, notice
$$
\begin{aligned}
&\overline{E}[|\Psi_1^\omega(\overline{\omega},t)\Psi_2(\overline{\omega},t)\overline{\rho}^1(t)|^2]
\leq C\overline{E}[\Psi_2(\overline{\omega},t)\overline{\rho}^1(t)|^2]\\
&\leq C\Big\{\overline{E}[|\Psi_2(\overline{\omega},t)|^4]    \Big\}^\frac{1}{2}
\Big\{\overline{E}[|\overline{\rho}^1(t)|^4]\Big\}^\frac{1}{2}\leq C,
\end{aligned}
$$
and recall $\mathbb{F}=\mathcal{F}^*\vee\mathbb{F}^B$, then from It\^{o}'s Martingale
Representation Theorem we know that there exists a unique $\overline{\gamma}^1_t(\cdot)\in \mathcal{H}^2_{\overline{\mathbb{F}}}(0,t)$
such that
\begin{equation}\label{equ 3.3-8}
\Psi_1(\omega,\overline{\omega},t)\Psi_2(\overline{\omega},t)\overline{\rho}^1(t)
=\overline{E}[\Psi_1(\omega,\overline{\omega},t)\Psi_2(\overline{\omega},t)\overline{\rho}^1(t)]
+\int_0^t\overline{\gamma}^1_t(s)d\overline{W}_s,\ t\in[0,l].
\end{equation}
Moreover, $(\overline{\gamma}^1_t)_{t\in[0,l]}$ enjoys the following property: For $\beta\geq1$, there exists a
$C_\beta>0$, such that
\begin{equation}\label{equ 3.3-9}
\overline{E}[(\int_0^t|\overline{\gamma}^1_t(s)|^2ds)^\frac{\beta}{2}]\leq C_\beta.
\end{equation}
Indeed, for $\beta>1$, with the help of the Burkholder-Davis-Gundy, Doob's maximal inequality
and H\"{o}lder inequalities, it follows
\begin{equation}\label{equ 3.3-10}
\begin{aligned}
\overline{E}[(\int_0^t|\overline{\gamma}^1_t(s)|^2ds)^\frac{\beta}{2}]
&\leq C_\beta \overline{E}\Big[\sup_{s\in[0,t]}|\int_0^s\overline{\gamma}^1_t(r)d\overline{W}(r)|^\beta\Big]\\
&\leq C_\beta (\frac{\beta}{\beta-1})^\beta\overline{E}[|\int_0^t\overline{\gamma}^1_t(r)d\overline{W}(r)|^\beta]\\
&\leq C_\beta \overline{E}[|\Psi_1(\omega,\overline{\omega},t)\Psi_2(\overline{\omega},t)\overline{\rho}^1(t)
-\overline{E}[\Psi_1(\omega,\overline{\omega},t)\Psi_2(\overline{\omega},t)\overline{\rho}^1(t)]|^\beta]\\
&\leq C_\beta  \overline{E}[|\Psi_1(\omega,\overline{\omega},t)\Psi_2(\overline{\omega},t)\overline{\rho}^1(t)|^\beta]
\leq C_\beta.
\end{aligned}
\end{equation}
For $\beta=1$, (\ref{equ 3.3-9}) can be obtained by H\"{o}lder inequality.

Let us now calculate $\Theta_1(t,\varepsilon)$. Clearly, according to (\ref{equ 3.3-8})
one has
\begin{equation}\label{equ 3.3-11}
\begin{aligned}
\Theta_1(t,\varepsilon)
&=\overline{E}[\int_0^t\overline{\gamma}^1_t(s)\overline{\eta}^1(s)\widehat{E}[\overline{\widehat{\sigma}}_\nu(s)\widehat{X}^{1,\varepsilon}(s)]ds]
+\overline{E}[\int_0^t\overline{\gamma}^1_t(s)\overline{\eta}^1(s)\Delta\overline{\sigma}(s) \mathbbm{1}_{E_\varepsilon}(s)ds]\\
&:=\Theta_{1,1}(t,\varepsilon)+\Theta_{1,2}(t,\varepsilon).
\end{aligned}
\end{equation}
On the one hand, H\"{o}lder inequality allows to show
\begin{equation}\label{equ 3.3-12}
\begin{aligned}
|\Theta_{1,1}(t,\varepsilon)|^2
&\leq \overline{E}\Big[\sup_{s\in[0,l]}|\overline{\eta}^1(s)|^2
\int_0^t|\overline{\gamma}^1_t(s)|^2ds \int_0^t|\widehat{E}[\overline{\widehat{\sigma}}_\nu(s)\widehat{X}^{1,\varepsilon}(s)]|^2ds\Big]\\
&\leq C\Big\{\overline{E}\Big[ \int_0^t|\widehat{E}[\overline{\widehat{\sigma}}_\nu(s)\widehat{X}^{1,\varepsilon}(s)]|^4ds\Big]\Big\}^\frac{1}{2}
\cdot
\Big\{E[\sup_{s\in[0,l]}|\overline{\eta}^1(s)|^8 ]\Big\}^\frac{1}{4}
\cdot\\
&\qquad\qquad\qquad\qquad\qquad\qquad\qquad\qquad\qquad
\Big\{\overline{E}[(\int_0^t|\overline{\gamma}^1_t(s)|^2ds)^4]\Big\}^\frac{1}{4}\\
&\leq C\Big\{\overline{E}\Big[ \int_0^t|\widehat{E}[\overline{\widehat{\sigma}}_\nu(s)\widehat{X}^{1,\varepsilon}(s)]|^4ds\Big]\Big\}^\frac{1}{2}.
\end{aligned}
\end{equation}
On the other hand, from H\"{o}lder inequality, again, it follows
\begin{equation}\label{equ 3.3-13}
\begin{aligned}
|\Theta_{1,2}(t,\varepsilon)|^2&=|\overline{E}[\int_0^t\overline{\gamma}^1_t(s)\overline{\eta}^1(s)\Delta\overline{\sigma}(s)
\mathbbm{1}_{E_\varepsilon}(s)ds]|^2\\
&\leq C\overline{E}\Big[ \sup_{s\in[0,l]}|\overline{\eta}^1(s)|^2\Big(\int_0^t|\overline{\gamma}^1_t(s)|\cdot|\overline{v}(s)-\overline{u}^*(s)|\mathbbm{1}_{E_\varepsilon}(s)\Big)^2\Big]\\
&\leq C\varepsilon\overline{E}\Big[\sup_{s\in[0,l]}|\overline{\eta}^1(s)|^2\cdot
\sup_{s\in[0,l]}|\overline{v}(s)-\overline{u}^*(s)|^2\cdot\int_0^t|\overline{\gamma}^1_t(s)|^2\mathbbm{1}_{E_\varepsilon}(s)ds\Big]\\
&\leq C\varepsilon\Big\{
\overline{E}[(\int_0^t|\overline{\gamma}^1_t(s)|^2\mathbbm{1}_{E_\varepsilon}(s)ds)^2]\Big\}^\frac{1}{2}
\Big\{\overline{E}[\sup_{s\in[0,l]}|\overline{\eta}^1(s)|^8]\Big\}^\frac{1}{4}
\Big\{\overline{E}[\sup_{s\in[0,l]}|\overline{v}(s)-\overline{u}^*(s)|^8]\Big\}^\frac{1}{4}\\
&\leq C\varepsilon\Big\{
\overline{E}[(\int_0^t|\overline{\gamma}^1_t(s)|^2\mathbbm{1}_{E_\varepsilon}(s)ds)^2]\Big\}^\frac{1}{2}.
\end{aligned}
\end{equation}
Combining    (\ref{equ 3.3-11}), (\ref{equ 3.3-12}), (\ref{equ 3.3-13}), we obtain
\begin{equation}\label{equ 3.3-14}
\begin{aligned}
|\Theta_1(t,\varepsilon)|^4\leq C\overline{E}[\int_0^t|\widehat{E}[\overline{\widehat{\sigma}}_\nu(s)\widehat{X}^{1,\varepsilon}(s)]|^4ds]
+C\varepsilon^2\overline{E}[(\int_0^t|\overline{\gamma}^1_t(s)|^2\mathbbm{1}_{E_\varepsilon}(s)ds)^2].
\end{aligned}
\end{equation}
Hence, for $0\leq r\leq l$,
\begin{equation}\label{equ 3.3-15}
\begin{aligned}
\int_0^rE[|\Theta_1(t,\varepsilon)|^4]dt
&\leq C\int_0^r
\overline{E}[\int_0^t|\widehat{E}[\overline{\widehat{\sigma}}_\nu(s)\widehat{X}^{1,\varepsilon}(s)]|^4ds]dt
+C\varepsilon^2E\overline{E}[\int_0^T(\int_0^t|\overline{\gamma}^1_t(s)|^2\mathbbm{1}_{E_\varepsilon}(s)ds)^2dt]\\
&\leq C \int_0^r\int_0^t
\overline{E}[|\widehat{E}[\overline{\widehat{\sigma}}_\nu(s)\widehat{X}^{1,\varepsilon}(s)]|^4]ds dt+\varepsilon^2\varrho_1(\varepsilon),
\end{aligned}
\end{equation}
where  $\varrho_1(\varepsilon):=C E\overline{E}[\int_0^T(\int_0^t|\gamma^1_t(s)|^2\mathbbm{1}_{E_\varepsilon}(s)ds)^2dt].$
Dominated Convergence Theorem allows to show $\varrho_1(\varepsilon)\rightarrow0$, as $\varepsilon\rightarrow0$.

Let us now focus on $\Theta_2(t,\varepsilon)$.
From the assumptions on $\Psi_1$, $\Psi_2$ and (\ref{equ 3.3-6}) , it follows
$$
\begin{aligned}
|\Theta_2(t,\varepsilon)|
&\leq C\overline{E}\Big[
|\Psi_2(\overline{\omega},t)\overline{\rho}^1(t)\int_0^t\overline{\eta}^1(s)\widehat{E}[\overline{\widehat{b}}_\nu(s)\widehat{X}^{1,\varepsilon}(s)]ds|\\
&\quad+|\Psi_2(\overline{\omega},t)\overline{\rho}^1(t)\int_0^t\overline{\eta}^1(s)\overline{\sigma}_x(s)
\widehat{E}[\overline{\widehat{\sigma}}_\nu(s)\widehat{X}^{1,\varepsilon}(s)]ds|\\
&\quad+|\Psi_2(\overline{\omega},t)\overline{\rho}^1(t)\int_0^t\overline{\eta}^1(s)
\Delta \overline{b}(s)\mathbbm{1}_{E_\varepsilon}(s)ds|\\
&\quad+|\Psi_2(\overline{\omega},t)\overline{\rho}^1(t)\int_0^t\overline{\eta}^1(s)\sigma_x(s)
\Delta \overline{\sigma}(s)\mathbbm{1}_{E_\varepsilon}(s)ds|\Big]\\
&\leq C\Big\{
\overline{E}[\int_0^t|\widehat{E}[\overline{\widehat{b}}_\nu(s)\widehat{X}^{1,\varepsilon}(s)]|^2ds]\Big\}^\frac{1}{2}
+C\Big\{
\overline{E}[\int_0^t|\widehat{E}[\overline{\widehat{\sigma}}_\nu(s)\widehat{X}^{1,\varepsilon}(s)]|^2ds]\Big\}^\frac{1}{2}+C\varepsilon,
\end{aligned}
$$
which implies
$$
|\Theta_2(t,\varepsilon)|^4\leq C\varepsilon^4
+C\int_0^t\overline{E}[|\widehat{E}[\overline{\widehat{b}}_\nu(s)\widehat{X}^{1,\varepsilon}(s)]|^4
+|\widehat{E}[\overline{\widehat{\sigma}}_\nu(s)\widehat{X}^{1,\varepsilon}(s)]|^4]ds.
$$
Hence,
\begin{equation}\label{equ 3.3-16}
\begin{aligned}
\int_0^rE[|\Theta_2(t,\varepsilon)|^4]dt
\leq C \varepsilon^4 +C\int_0^r\int_0^t
\overline{E}[|\widehat{E}[\overline{\widehat{b}}_\nu(s)\widehat{X}^{1,\varepsilon}(s)]|^4
+|\widehat{E}[\overline{\widehat{\sigma}}_\nu(s)\widehat{X}^{1,\varepsilon}(s)]|^4]dsdt.
\end{aligned}
\end{equation}
According to (\ref{equ 3.3-15}), (\ref{equ 3.3-16}), we have, for $r\in[0,l]$,
\begin{equation}\label{equ 3.3-16-1}
\begin{aligned}
&\int_0^rE|\overline{E}[\Psi_1(\omega,\overline{\omega},t)\Psi_2(\overline{\omega},t)\overline{X}^{1,\varepsilon}(t)]|^4dt\\
&\leq \varepsilon^2\rho(\varepsilon)
+C \int_0^r\int_0^t
E\Big[|\widehat{E}[\widehat{b}_\nu(s)\widehat{X}^{1,\varepsilon}(s)]|^4
+|\widehat{E}[\widehat{\sigma}_\nu(s)\widehat{X}^{1,\varepsilon}(s)]|^4
\Big]dsdt.
\end{aligned}
\end{equation}
Taking $(\overline{\Omega},\overline{\mathcal{F}},\overline{P})=(\widehat{\Omega},\widehat{\mathcal{F}},\widehat{P})$,
and set $\Psi_1(\omega,\widehat{\omega},t)=\widehat{b}_\nu(t), \widehat{\sigma}_\nu(t),\ \Psi_2(\widehat{\omega},t)=1,$  respectively, the
Gronwall inequality allows to show
\begin{equation}\label{equ 3.3-17}
\begin{aligned}
\int_0^lE\Big[
|\widehat{E}[\widehat{b}_\nu(s)\widehat{X}^{1,\varepsilon}(s)]|^4
+|\widehat{E}[\widehat{\sigma}_\nu(s)\widehat{X}^{1,\varepsilon}(s)]|^4
\Big]ds\leq \varepsilon^2\rho(\varepsilon).
\end{aligned}
\end{equation}

\textbf{Case 3} If $l\leq t\leq T$, we have known from (\ref{equ 3.3-7}) the
specific expression of $X^{1,\varepsilon}(t-l)$, i.e., for $t\in[l,T]$,
\begin{equation}\label{equ 3.3-18}
\begin{aligned}
X^{1,\varepsilon}(t-l)&=\rho^1(t-l)\int_l^t\eta^1(s-l)(\widehat{E}[\widehat{\sigma}_\nu(s-l)\widehat{X}^{1,\varepsilon}(s-l)]
+\Delta\sigma(s-l)\mathbbm{1}_{E_\varepsilon}(s-l))dW(s)\\
&\quad+\rho^1(t-l)\int_l^t\eta^1(s-l)(\widehat{E}[\widehat{b}_\nu(s-l)\widehat{X}^{1,\varepsilon}(s-l)]
+\Delta b(s-l)\mathbbm{1}_{E_\varepsilon}(s-l))ds\\
&\quad-\rho^1(t-l)\int_l^t\eta^1(s-l)\sigma_x(s-l)(\widehat{E}[\widehat{\sigma}_\nu(s-l)\widehat{X}^{1,\varepsilon}(s-l)]
+\Delta \sigma(s-l)\mathbbm{1}_{E_\varepsilon}(s-l))ds.
\end{aligned}
\end{equation}
In this case, $X^{1,\varepsilon}$ is of the form:
\begin{equation}\label{equ 3.3-19}
\begin{aligned}
dX^{1,\varepsilon}(t)&=\Big\{ b_x(t)X^{1,\varepsilon}(t)+b_{x'}(t)X^{1,\varepsilon}(t-l)
          +\widehat{E}[\widehat{b}_\nu(t)\widehat{X}^{1,\varepsilon}(t)]+\Delta b(t)\mathbbm{1}_{E_\varepsilon}(t)\Big\}dt\\
        &\quad+\Big\{ \sigma_x(t)X^{1,\varepsilon}(t)+\sigma_{x'}(t)X^{1,\varepsilon}(t-l)
        +\widehat{E}[\widehat{\sigma}_\nu(t)\widehat{X}^{1,\varepsilon}(t)]\\
        &\qquad\qquad\qquad\qquad\qquad\qquad\qquad\qquad\qquad\       +\Delta \sigma(t)\mathbbm{1}_{E_\varepsilon}(t)\Big\}dW(t),\ t\in(l,T],
\end{aligned}
\end{equation}
where $X^{1,\varepsilon}(t-l)$ is given in (\ref{equ 3.3-18}).

We define
\begin{equation}\label{equ 3.3-20}
\left\{
\begin{aligned}
d\eta^2(t)&=-(b_x(t)-|\sigma_x(t)|^2)dt-\sigma_x(t)dW(t),\ t\in[l,T],\\
\eta^2(l)&=1,
\end{aligned}
\right.
\end{equation}
and $\rho^2(t)=(\eta^2(t))^{-1},\ t\in[l,T]$.\\
Obviously, for $\beta\geq1$,
\begin{equation}\label{equ 3.3-20-1}
E\Big[\sup_{t\in[l,T]}(|\rho^2(t)|^\beta+|\eta^2(t)|^\beta)\Big]\leq C_\beta.
\end{equation}
Applying It\^{o}'s formula to $\eta^2(s)X^{1,\varepsilon}(s)$ over $[l,t]$,
we have
\begin{equation}\label{equ 3.3-21}
\begin{aligned}
X^{1,\varepsilon}(t)=J_1(t,\varepsilon)+J_2(t,\varepsilon)+J_3(t,\varepsilon)+J_4(t,\varepsilon),
\end{aligned}
\end{equation}
where
$$
\begin{aligned}
J_1(t,\varepsilon)&=\rho^2(t)\int_l^t\eta^2(s)\sigma_{x'}(s)X^{1,\varepsilon}(s-l)ds,\\
J_2(t,\varepsilon)&=\rho^2(t)\int_l^t\eta^2(s)b_{x'}(s)X^{1,\varepsilon}(s-l)dW(s),  \quad J_3(t,\varepsilon)=\rho^2(t)X^{1,\varepsilon}(l),\\
J_4(t,\varepsilon)&=\rho^2(t)\int_l^t(\eta^2(s)E[\widehat{\sigma}_\nu(s)\widehat{X}^{1,\varepsilon}(s)]
+\eta^2(s)\Delta\sigma(s)\mathbbm{1}_{E_\varepsilon}(s))dW(s)\\
&\quad+\rho^2(t)\int_l^t(\eta^2(s)E[\widehat{b}_\nu(s)\widehat{X}^{1,\varepsilon}(s)]
+\eta^2(s)\Delta b(s)\mathbbm{1}_{E_\varepsilon}(s))ds\\
&\quad-\rho^2(t)\int_l^t(\eta^2(s)\sigma_x(s)E[\widehat{\sigma}_\nu(s)\widehat{X}^{1,\varepsilon}(s)]
+\eta^2(s)\sigma_x(s) \Delta\sigma(s)\mathbbm{1}_{E_\varepsilon}(s))ds.\\
\end{aligned}
$$

In what follows, we mainly analyse $J_1(t,\varepsilon)$, since $J_2(t,\varepsilon), J_3(t,\varepsilon)$ can
be estimated with the similar argument. As for $J_4(t,\varepsilon)$, it can be calculated following the method used in Case 2.

To begin with, insert (\ref{equ 3.3-18}) into $J_1(t,\varepsilon)$  we can get
\begin{equation}\label{equ 3.3-23}
\begin{aligned}
J_1(t,\varepsilon)=J_{1,1}(t,\varepsilon)+J_{1,2}(t,\varepsilon)+J_{1,3}(t,\varepsilon),
\end{aligned}
\end{equation}
where
$$
\begin{aligned}
J_{1,1}(t,\varepsilon)&=\rho^2(t)\int_l^t\int_0^{s-l}\eta^2(s)\sigma_{x'}(s)\rho^1(s-l)
\eta^1(r)(\widehat{E}[\widehat{\sigma}_\nu(r)\widehat{X}^{1,\varepsilon}(r)]
+\Delta\sigma(r)\mathbbm{1}_{E_\varepsilon}(r))dW(r)ds,\\
J_{1,2}(t,\varepsilon)&=\rho^2(t)\int_l^t\int_0^{s-l}\eta^2(s)\sigma_{x'}(s)\rho^1(s-l)
\eta^1(r)(\widehat{E}[\widehat{b}_\nu(r)\widehat{X}^{1,\varepsilon}(r)]
+\Delta b(r)\mathbbm{1}_{E_\varepsilon}(r))drds,\\
J_{1,3}(t,\varepsilon)&=-\rho^2(t)\int_l^t\int_0^{s-l}\eta^2(s)\sigma_{x'}(s)\rho^1(s-l)
\eta^1(r)\sigma_x(r)(\widehat{E}[\widehat{\sigma}_\nu(r)\widehat{X}^{1,\varepsilon}(r)]
+\Delta\sigma(r)\mathbbm{1}_{E_\varepsilon}(r))drds.
\end{aligned}
$$
Observe $J_{1,1}(t,\varepsilon)$, exchanging the order of integration, it yields
$$
\begin{aligned}
J_{1,1}(t,\varepsilon)&=\rho^2(t)\int_l^t\eta^1(r-l)
(\widehat{E}[\widehat{\sigma}_\nu(r-l)\widehat{X}^{1,\varepsilon}(r-l)]
+\Delta\sigma(r-l)\mathbbm{1}_{E_\varepsilon}(r-l))\cdot\\
&\qquad\qquad\qquad\qquad\qquad\qquad\qquad\qquad\qquad\qquad
\int_r^t\eta^2(s)\sigma_{x'}(s)\rho^1(s-l)dsdW(r).
\end{aligned}
$$
For $l\leq u\leq T$,  set $\varphi(u)=\int^u_l\eta^2(s)\sigma_{x'}(s)\rho^1(s-l)ds$. Clearly,
for $\beta\geq1$,
\begin{equation}\label{equ 3.3-23-1}
E[\sup_{l\leq u\leq T}|\varphi(u)|^\beta]\leq C_\beta,
\end{equation}
where $C_\beta$ is a constant depending on $\beta$.

In addition, utilizing the similar argument in Case 2, we know that for each $t\in[l,T]$,
there exists a unique $\overline{\gamma}^2_t(\cdot)\in \mathcal{H}^2_{\overline{\mathbb{F}}}(l,T)$ with the
property: for any $\beta\geq1$,
\begin{equation}\label{equ 3.3-21-1}
\overline{E}[(\int_l^T|\overline{\gamma}^2_t(s)|^2ds)^\frac{\beta}{2}]\leq C_\beta,
\end{equation}
such that
\begin{equation}\label{equ 3.3-22}
\Psi_1(\omega,\overline{\omega},t)\Psi_2(\overline{\omega},t)\overline{\rho}^2(t)
=\overline{E}[\Psi_1(\omega,\overline{\omega},t)\Psi_2(\overline{\omega},t)\overline{\rho}^2(t)]
+\int_l^t\overline{\gamma}^2_t(s)d\overline{W}_s,\ t\in[l,T].
\end{equation}
Consequence, from (\ref{equ 3.3-6}), (\ref{equ 3.3-23-1}), (\ref{equ 3.3-21-1}), (\ref{equ 3.3-22}),
one can check that for $l\leq u\leq T$,
\begin{equation}\label{equ 3.3-24}
\begin{aligned}
&\int_l^uE|\overline{E}[\Psi_1(\omega,\overline{\omega},t)\Psi_2(\overline{\omega},t)\overline{J}_{1,1}(t,\varepsilon)]|^4dt\\
&=\int_l^uE\big|\overline{E}[\Psi_1(\omega,\overline{\omega},t)\Psi_2(\overline{\omega},t)
\overline{\rho}^2(t)\int_l^t\overline{\eta}^1(r-l)
(\widehat{E}[\overline{\widehat{\sigma}}_\nu(r-l)\widehat{X}^{1,\varepsilon}(r-l)]
+\Delta\overline{\sigma}(r-l)\mathbbm{1}_{E_\varepsilon}(r-l))\cdot\\
&\qquad\qquad\qquad\qquad\qquad\qquad\qquad\qquad\qquad\qquad\qquad\qquad\qquad\qquad\qquad
(\overline{\varphi}(t)-\overline{\varphi}(r))d\overline{W}(r)]\big|^4dt\\
&=\int_l^uE|\overline{E}[\int_l^t\overline{\gamma}^2_t(r)\overline{\eta}^1(r-l)
(\widehat{E}[\overline{\widehat{\sigma}}_\nu(r-l)\widehat{X}^{1,\varepsilon}(r-l)]
+\Delta\overline{\sigma}(r-l)\mathbbm{1}_{E_\varepsilon}(r-l))\cdot(\overline{\varphi}(t)-\overline{\varphi}(r))dr]|^4dt\\
&\leq C \int_l^u\int_l^tE[|\widehat{E}[\widehat{\sigma}_\nu(r-l)\widehat{X}^{1,\varepsilon}(r-l)]|^4]drdt
+C \varepsilon^2E\overline{E}[\int_l^T(\int_l^t|\overline{\gamma}^2_t(r)|^2\mathbbm{1}_{E_\varepsilon}(r-l)dr)^2dt]\\
&\leq \varepsilon^2\rho(\varepsilon)+\varepsilon^2\varrho_2(\varepsilon),
\end{aligned}
\end{equation}
where $\varrho_2(\varepsilon):=E\overline{E}[\int_l^T(\int_l^t|\overline{\gamma}^2_t(r)|^2\mathbbm{1}_{E_\varepsilon}(r-l)dr)^2dt]\rightarrow0,$ as $\varepsilon\rightarrow0$.\\
$J_{1,2}$ and $J_{1,3}$ can be estimated with the similar argument as above. Consequence,
\begin{equation}\label{equ 3.3-25}
\begin{aligned}
&\int_l^uE|\overline{E}[\Psi_1(\omega,\overline{\omega},t)\Psi_2(\overline{\omega},t)\overline{J}_{1,2}(t,\varepsilon)]|^4dt
+\int_l^uE|\overline{E}[\Psi_1(\omega,\overline{\omega},t)\Psi_2(\overline{\omega},t)\overline{J}_{1,3}(t,\varepsilon)]|^4dt
\leq\varepsilon^2\varrho_3(\varepsilon),
\end{aligned}
\end{equation}
where  $\varrho_3(\varepsilon)\rightarrow0$ as $\varepsilon\rightarrow0$.\\
Thanks to (\ref{equ 3.3-24}) and (\ref{equ 3.3-25}),  we obtain, for $l\leq u\leq T$,
\begin{equation}\label{equ 3.3-26}
\begin{aligned}
&\int_l^uE|\overline{E}[\Psi_1(\omega,\overline{\omega},t)\Psi_2(\overline{\omega},t)\overline{J}_{1}(t,\varepsilon)]|^4dt
\leq\varepsilon^2\rho(\varepsilon).
\end{aligned}
\end{equation}
Similarly, one also has
\begin{equation}\label{equ 3.3-27}
\begin{aligned}
&\int_l^uE|\overline{E}[\Psi_1(\omega,\overline{\omega},t)\Psi_2(\overline{\omega},t)\overline{J}_{2}(t,\varepsilon)]|^4dt
+\int_l^uE|\overline{E}[\Psi_1(\omega,\overline{\omega},t)\Psi_2(\overline{\omega},t)\overline{J}_{3}(t,\varepsilon)]|^4dt
\leq\varepsilon^2\rho(\varepsilon).
\end{aligned}
\end{equation}

Last but not least, combining (\ref{equ 3.3-26}) with (\ref{equ 3.3-27}), following the analyses in Case 2 we get,  for  $l\leq u\leq T$,
\begin{equation}\label{equ 3.3-28}
\begin{aligned}
&\int_l^uE|\overline{E}[\Psi_1(\omega,\overline{\omega},t)\Psi_2(\overline{\omega},t)\overline{X}^{1,\varepsilon}(t,\varepsilon)]|^4dt\\
&\leq\varepsilon^2\rho(\varepsilon)+\int_l^u E|\overline{E}[\Psi_1(\omega,\overline{\omega},t)\Psi_2(\overline{\omega},t)\overline{J}_4(t,\varepsilon)]|^4dt\\
&\leq \varepsilon^2\rho(\varepsilon)+\int_l^u\int_l^t
E\Big[|\widehat{E}[\widehat{b}_\nu(s)\widehat{X}^{1,\varepsilon}(s)]|^4
+|\widehat{E}[\widehat{\sigma}_\nu(s)\widehat{X}^{1,\varepsilon}(s)]|^4
\Big]dsdt.
\end{aligned}
\end{equation}
Let $(\overline{\Omega},\overline{\mathcal{F}},\overline{P})=(\widehat{\Omega},\widehat{\mathcal{F}},\widehat{P})$,
and $\Psi_1(\omega,\widehat{\omega},t)=\widehat{b}_\nu(t)$ (resp., $\Psi_1(\omega,\widehat{\omega},t)=\widehat{\sigma}_\nu(t)$), and
$\Psi_2(\widehat{\omega},t)=1$ (resp., $\Psi_2(\widehat{\omega},t)=1$),  the the Gronwall inequality implies the desired result.
The proof is completed.
\end{proof}

\begin{remark}\label{re 3.2}
In particular, in Lemma 3.1  we set $\Psi_1(\omega,\overline{\omega},t)=\overline{b}_\nu(t),\ \Psi_2(\overline{\omega},t)=1$ and
$\Psi_1(\omega,\overline{\omega},t)=\overline{\sigma}_\nu(t),\ \Psi_2(\overline{\omega},t)=1$, separately, then
\begin{equation}\label{equ 3.3-2}
E\Big[\int_{0}^T(|\overline{E}[\overline{b}_\nu(t) \overline{X}^{1,\varepsilon}(t)]|^4
+|\overline{E}[\overline{\sigma}_\nu(t) \overline{X}^{1,\varepsilon}(t)]|^4) dt\Big]\leq \varepsilon^2\rho(\varepsilon),
\end{equation}
which is just the estimate (4.14) given by Buckdahn, Li and Ma \cite{BLM1}.
\end{remark}
With the help of Remark \ref{re 3.2}, it is easy to check
\begin{proposition}\label{pro 3.3}
Let the assumptions (H3.1)-(H3.3)  be in force, then for $\beta\geq1$,
\begin{equation}\label{equ 3.4}
\begin{aligned}
&\mathrm{i)}\ E[\sup_{t\in[-l,T]}|X^\varepsilon(t)-X^*(t)|^{2\beta}]\leq C\varepsilon^\beta, \quad
\mathrm{ii)}\ E[\sup_{t\in[-l,T]}|X^{1,\varepsilon}(t)|^{2\beta}]\leq C\varepsilon^\beta, \\
&\mathrm{iii)}\ E[\sup_{t\in[-l,T]}|X^{2,\varepsilon}(t)|^{2\beta}]\leq C\varepsilon^{2\beta},\quad
\mathrm{iv)}\ E[\sup_{t\in[-l,T]}|X^\varepsilon(t)-X^*(t)-X^{1,\varepsilon}(t)|^{2\beta}]\leq C\varepsilon^{2\beta},\\
&\mathrm{v)}\ E[\sup_{t\in[-l,T]}|X^\varepsilon(t)-X^*(t)-X^{1,\varepsilon}(t)-X^{2,\varepsilon}(t)|^{2}]\leq \varepsilon^{2}\rho(\varepsilon).
\end{aligned}
\end{equation}
\end{proposition}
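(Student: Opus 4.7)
Plan. The five assertions are proved in order, each reducing to a standard $L^{2\beta}$-estimate for a linear mean-field SDE with delay. The toolbox is BDG, H\"older, Minkowski and Gronwall, plus Lemma 3.1 for the final and hardest estimate.

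For (i) and (ii): write the SDE satisfied by $X^\varepsilon - X^*$ and use Lipschitz continuity of $b,\sigma$ in $(x,x',\nu)$ to bring out a linear term $C\,E[\sup_{s\le t}|X^\varepsilon-X^*|^{2\beta}]$ controllable by Gronwall, after separating the inhomogeneous spike term. That spike term contributes $E\bigl[\bigl(\int_0^T|\Delta\sigma|^2 \mathbbm{1}_{E_\varepsilon} ds\bigr)^\beta\bigr]\leq C\varepsilon^\beta$ by H\"older on $[0,T]$ together with $\lambda(E_\varepsilon)=\varepsilon$ and the a priori bound on $\Delta\sigma$. The delay can be handled by iterating Gronwall on $[0,l]$ and $[l,T]$. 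For $X^{1,\varepsilon}$ the argument is identical, the mean-field piece $\widehat E[\widehat b_\nu \widehat X^{1,\varepsilon}]$ being bounded in modulus by $C\bigl(E|X^{1,\varepsilon}|^{2\beta}\bigr)^{1/(2\beta)}$ thanks to (H3.2).

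For (iii): the inhomogeneities in (\ref{equ 3.3}) are of three types. Pure quadratic ones such as $b_{xx}(X^{1,\varepsilon})^2$ are bounded via (ii) with $4\beta$ in place of $2\beta$, giving $O(\varepsilon^{2\beta})$. The mixed-mean-field ones such as $\widehat E[\widehat b_{\nu a}(\widehat X^{1,\varepsilon})^2]$ are treated the same way because of the pointwise Lipschitz version of $\partial_\nu h$ discussed after Definition 2.2. Finally the spike-derivative terms $\Delta b_x X^{1,\varepsilon}\mathbbm{1}_{E_\varepsilon}$ satisfy, by BDG and H\"older, $E\bigl[\sup_t\bigl|\int_0^t\cdots dW\bigr|^{2\beta}\bigr]\leq C\varepsilon^{\beta-1}\int_0^T E|X^{1,\varepsilon}|^{2\beta}\mathbbm{1}_{E_\varepsilon}ds\leq C\varepsilon^{2\beta}$. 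Gronwall closes (iii). For (iv), set $\xi^\varepsilon:=X^\varepsilon-X^*-X^{1,\varepsilon}$, expand $b$ and $\sigma$ to first order in $(x,x',\nu)$ around the optimal trajectory, and subtract (\ref{equ 3.2}) to obtain a linear mean-field delay SDE for $\xi^\varepsilon$ whose inhomogeneity consists of (a) quadratic Taylor residuals bounded by $|X^\varepsilon-X^*|^2$, which is $O(\varepsilon^{2\beta})$ by (i); (b) spike-derivative terms of the shape handled in (iii), of order $\varepsilon^{2\beta}$ or better; (c) the difference $\widehat E[\widehat b_\nu(\widehat X^\varepsilon-\widehat X^*-\widehat X^{1,\varepsilon})]=\widehat E[\widehat b_\nu \widehat\xi^\varepsilon]$, which folds back into the linear part. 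Gronwall yields (iv).

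For (v), the main and most delicate step, set $\zeta^\varepsilon:=X^\varepsilon-X^*-X^{1,\varepsilon}-X^{2,\varepsilon}$. The idea is to expand $b(t,X^\varepsilon,X^\varepsilon(t-l),P_{X^\varepsilon(t)},v^\varepsilon)-b(t,X^*,X^*(t-l),P_{X^*(t)},u^*)$ to second order in all of $(x,x',\nu)$, using (\ref{equ 1.1-2}) for the measure part, and subtract (\ref{equ 3.2})+(\ref{equ 3.3}). The quadratic terms cancel by construction, leaving a linear mean-field delay SDE for $\zeta^\varepsilon$. The remainder decomposes into: cubic Taylor residuals $O(|X^\varepsilon-X^*|^3)$, bounded $O(\varepsilon^3)$ in $L^2$ by (i); mixed spike-derivative terms such as $(\Delta b_{xx})(X^{1,\varepsilon})^2\mathbbm{1}_{E_\varepsilon}$, bounded as in (iii) by $O(\varepsilon^3)$; quadratic-Taylor coefficient corrections in which one $X^\varepsilon-X^*$ is replaced by $X^{1,\varepsilon}$ at the cost of a factor $\xi^\varepsilon=O(\varepsilon^2)$ from (iv); and the second-order mean-field remainder from (\ref{equ 1.1-2}), i.e. terms of the form $\widehat E[\widehat\phi_\nu(t)\,\widehat X^{1,\varepsilon}(t)]\cdot(\text{something})$, plus the full-measure second-derivative term $\partial^2_{\nu\nu}$.

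The main obstacle is the last class of remainders. A naive bound $|\widehat E[\widehat\phi_\nu \widehat X^{1,\varepsilon}]|^2\leq C\,\widehat E|\widehat X^{1,\varepsilon}|^2\leq C\varepsilon$ loses a full power of $\varepsilon$ compared with the target $\varepsilon^2\rho(\varepsilon)$. This is precisely where Lemma 3.1 (and the specialization in Remark 3.2) is invoked: taking $\Psi_1=\widehat\phi_\nu$ and $\Psi_2=1$, or appropriate $\Psi_1,\Psi_2$ obtained from the other coefficients appearing at the quadratic level, gives $\int_0^T E|\widehat E[\widehat\phi_\nu(t)\widehat X^{1,\varepsilon}(t)]|^4\,dt\leq\varepsilon^2\rho(\varepsilon)$, which after Cauchy-Schwarz produces the desired $o(\varepsilon^2)$ control. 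The terms involving $\partial^2_{\nu\nu}$ are similarly shown to be of higher order, consistent with the remark made in the introduction; they involve two independent copies and are reduced to the same type of estimate by Fubini. A final Gronwall on $E[\sup_{s\le t}|\zeta^\varepsilon(s)|^2]$, split at $t=l$ to accommodate the delay, gives (v).
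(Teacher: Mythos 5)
Your proposal is correct and follows exactly the route the paper intends: the paper itself gives no written proof of Proposition 3.3, stating only that it is "easy to check" with the help of Remark 3.2 (the specialization of Lemma 3.1 to $\Psi_1=\overline{b}_\nu,\overline{\sigma}_\nu$, $\Psi_2=1$) and deferring the details to the analogous argument in [BLM1]. Your sketch supplies precisely those details — standard BDG/H\"older/Gronwall estimates iterated across $[0,l]$ and $[l,T]$ for i)--iv), and Lemma 3.1 to upgrade the naive $O(\varepsilon)$ bound on $\widehat E[\widehat\phi_\nu\widehat X^{1,\varepsilon}]$ to $\varepsilon^2\rho(\varepsilon)$ in v), with the $\partial^2_{\nu\nu}$ remainders absorbed as higher order — so it matches the paper's intended argument.
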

\begin{remark}\label{re 3.1}
We can rewrite Proposition \ref{pro 3.3}-v) as
\begin{equation}\label{equ 3.5}
X^\varepsilon(t)=X^*(t)+X^{1,\varepsilon}(t)+X^{2,\varepsilon}(t)+o(\varepsilon),
\end{equation}
where the convergence is in $L^2(\mathcal{F},C[-l,T])$ sense. In fact, we have a slightly strong
result, i.e., we  can find a $\alpha_0>2$, such that (\ref{equ 3.5}) holds true under
$L^\alpha(\mathcal{F};C[-l,T])$, $\alpha\in[2, \alpha_0)$. The reader can refer to \cite{BLM1} for more details.
\end{remark}

\subsection{Adjoint equations}

In this subsection two adjoint equations are introduced which are the basic-materials to apply duality method to study
our stochastic maximum principle. Compared with the classical case, see Hu \cite{Hu}, there are two notable differences. The
first one is the first-order adjoint equation is an anticipated mean-field BSDE. The second one
comes from the second-order adjoint equation. Due to the  delay term, the equation $(X^{1,\varepsilon})^2$ is
not sufficient to solve our control problem. It means that we have to bring in an additional auxiliary equation $X^{1,\varepsilon}(t)X^{2,\varepsilon}(t)$.
Hence, the second-order adjoint system  is a system of equations for matrix valued processes. Let us describe them in detail.

Denote for $\ell=x,x',y,z,$ $\theta=x,x',y,z,¡¢a_i$, $i,j=1,2$,
\begin{equation}\label{equ 3.6}
\begin{aligned}
&\Xi^*(t)=(X^*(t),X^*(t-l),Y^*(t),Z^*(t)),\  \Lambda^*(t)=(X^*(t),Y^*(t)),\
f_\ell(s)=\frac{\partial f}{\partial \ell}(s,\Xi^*(s), P_{ \Lambda^*(s)},u^*(s)),\\
&(\widehat{f}_{\mu_i},\widehat{f}_{\mu_i \theta})(s)=((\frac{\partial f}{\partial \mu})_i,
\frac{\partial}{\partial \theta}((\frac{\partial f}{\partial \mu})_i))(s,\Xi^*(s),
P_{ \Lambda^*(s)},u^*(s);\widehat{\Lambda}^*(s)),\\
&(\widehat{f}^*_{\mu_i},\widehat{f}^*_{\mu_i \theta})(s)=((\frac{\partial f}{\partial \mu})_i,
\frac{\partial}{\partial \theta}((\frac{\partial f}{\partial \mu})_i))(s,\widehat{\Xi}^*(s),
P_{ \Lambda^*(s)},\widehat{u}^*(s);\Lambda^*(s)),\\
&\widetilde{\widehat{f}}_{\mu_i \mu_j}(s)=
(\frac{\partial}{\partial \mu}((\frac{\partial f}{\partial \mu})_i))_j
(s,\Xi^*(s), P_{ \Lambda^*(s)},u^*(s);\widehat{\Lambda}^*(s),\widetilde{\Lambda}^*(s)).\\
\end{aligned}
\end{equation}

The first order
adjoint equation in our case is the following anticipated mean-field BSDE:
\begin{equation}\label{equ 3.9}
  \left\{
   \begin{aligned}
-dp(s)&=F(s)ds-q(s)dW(s), s\in[0,T],\\
p(T)&=\Phi_x(T)+\widehat{E}[\widehat{\Phi}^*_\nu(T)],\ p(t)=0,\ t\in(T,T+l],\ q(t)=0,\ t\in[T,T+l],
\end{aligned}
   \right.
\end{equation}
where $\widehat{\Phi}^*_\nu(T)=\frac{\partial\Phi}{\partial \nu}(\widehat{X}^*_T,P_{X^*_T};X^*_T)$ and
$$
 \begin{aligned}
 F(s)&=p(s)\Big(f_y(s)+\widehat{E}[\widehat{f}^*_{\mu_2}(s)]+f_z(s)\sigma_x(s)+b_x(s)\Big)
+\widehat{E}\Big[\widehat{p}(s)\Big(\widehat{f}_z(s)\widehat{\sigma}^*_\nu(s)+\widehat{b}^*_\nu(s)\Big)\Big]\\
&\quad +E^{\mathcal{F}_s}[p(s+l)(f_z(s+l)\sigma_{x'}(s+l)+b_{x'}(s+l))]\\
&\quad+q(s)\Big(f_z(s)+\sigma_x(s)\Big)+\widehat{E}[\widehat{q}(s)\widehat{\sigma}^*_\nu(s)]+E^{\mathcal{F}_s}[q(s+l)\sigma_{x'}(s+l)]\\
&\quad+f_x(s)+\widehat{E}[\widehat{f}^*_{\mu_1}(s)]+E^{\mathcal{F}_s}[f_{x'}(s+l)].  \\
\end{aligned}
$$

According to Theorem 3.1 in Guo, Xiong and Zheng \cite{GXZ}
under the assumptions (H3.1)-(H3.2) the anticipated mean-field BSDE (\ref{equ 3.9}) admits a unique solution
$(p,q)\in \mathcal{S}_{\mathbb{F}}^2(0,T+l)\times \mathcal{H}_{\mathbb{F}}^2(0,T+l)$, and
moreover, from the boundness of the derivatives of the coefficients it follows  that, for $\beta\geq2$, there
exists a constant $C_\beta$ depending on $\beta$   such that
\begin{equation}\label{equ 3.9-1}
E\Big[\sup_{t\in[0,T+l]}|p(t)|^\beta+(\int_{0}^{T+l}|q(t)|^2dt)^{\frac{\beta}{2}}\Big]\leq C_\beta.
\end{equation}

From Lemma \ref{le 3.1}, we have the following estimates.
\begin{corollary}\label{cor 3.5}
Let the assumptions (H3.1)-(H3.3) be in force, and set for $\ell=x,x',y,z,$
$$
\begin{aligned}
\widehat{M}_1(s)&:=(\widehat{f}_{\mu_1},\widehat{f}_{\mu_2})(s),\
\widehat{M}_2(s):=(\widehat{f}_{\ell\mu_1},\widehat{f}_{\ell\mu_2})(s),\
\widetilde{\widehat{M}}_3(s):=(\widetilde{\widehat{f}}_{\mu_1 \mu_1},\widetilde{\widehat{f}}_{\mu_1 \mu_2}, \widetilde{\widehat{f}}_{\mu_2 \mu_2})(s).
\end{aligned}
$$
Let $X^{1,\varepsilon}$ and $p$ be the solutions of (\ref{equ 3.2}) and  (\ref{equ 3.9}), separately, then
\begin{equation}\label{equ 3.32}
\begin{aligned}
&\mathrm{i)}\ E\Big[\int_0^T\Big(|\widehat {E}[\widehat{M}_1(s)\widehat{X}^{1,\varepsilon}(s)]|^4
+|\widehat {E}[\widehat{M}_2(s)\widehat{X}^{1,\varepsilon}(s)]|^4\Big)ds\Big]\leq\varepsilon^2\rho(\varepsilon);\\
&\mathrm{ii)}\ E\Big[\int_0^T\Big(|\widehat {E}[\widehat{M}_1(s)\widehat{p}(s)\widehat{X}^{1,\varepsilon}(s)]|^4
+|\widehat{E}[\widehat{M}_2(s)\widehat{p}(s)\widehat{X}^{1,\varepsilon}(s)]|^4\Big)ds\Big]\leq\varepsilon^2\rho(\varepsilon);\\
&\mathrm{iii)}\ E\widetilde{E}\Big[\int_0^T |\widehat{E}[\widetilde{\widehat{M}}_3(s)\widehat{p}(s)\widehat{X}^{1,\varepsilon}(s)]|^4ds\Big]\leq\varepsilon^2\rho(\varepsilon).\\
\end{aligned}
\end{equation}

\end{corollary}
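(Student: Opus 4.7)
The plan is to obtain all three estimates as direct applications of Lemma 3.1, by choosing the generic processes $\Psi_1,\Psi_2$ appropriately and verifying the two hypotheses (uniform boundedness of $\Psi_1$, and finite moments of $\sup_t|\Psi_2|$). The underlying template is the same for all three parts: $\Psi_1$ will carry the measure-derivatives of $f$, which are uniformly bounded by (H3.3), while $\Psi_2$ will carry the first-order adjoint process $\widehat{p}$, whose moments are controlled by the a priori estimate \eqref{equ 3.9-1}. In particular, we apply Lemma 3.1 with the intermediate space $(\overline{\Omega},\overline{\mathcal{F}},\overline{P})$ taken to be the independent copy $(\widehat{\Omega},\widehat{\mathcal{F}},\widehat{P})$.

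For part (i), I take $\Psi_1(\omega,\widehat{\omega},s)$ equal to each scalar component of $\widehat{M}_1(s)$ (respectively $\widehat{M}_2(s)$), and $\Psi_2\equiv 1$. Assumption (H3.3) states that $\partial_\mu f$ and all the first-order partial derivatives $\partial_\ell \partial_\mu f$ are bounded, so $|\Psi_1|\leq C$ holds $P\otimes \widehat{P}$-a.s., and the moment hypothesis on $\Psi_2$ is trivial. Lemma 3.1 therefore delivers the bound $\varepsilon^2\rho(\varepsilon)$ for each component, and summing over the finitely many components yields the estimate in (i).

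For part (ii), I keep the same choice of $\Psi_1$ but now set $\Psi_2(\widehat{\omega},s)=\widehat{p}(s)$. The moment hypothesis $\widehat{E}[\sup_{s\in[-l,T]}|\widehat{p}(s)|^{2\beta}]\leq C_\beta$ for every $\beta\geq 1$ is exactly \eqref{equ 3.9-1} applied on the hat space, so Lemma 3.1 again applies and immediately gives (ii).

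For part (iii), the extra subtlety is that $\widetilde{\widehat{M}}_3(s)$ depends on $\widehat{\omega}$ and $\widetilde{\omega}$ simultaneously. My plan is to condition on $\widetilde{\omega}$: for each fixed $\widetilde{\omega}\in\widetilde{\Omega}$ I view $\Psi_1^{\widetilde{\omega}}(\omega,\widehat{\omega},s):=\widetilde{\widehat{M}}_3(s)$ as a process on $\Omega\times\widehat{\Omega}$, which by (H3.3) (boundedness of $\partial^2_{\mu\mu} f$) satisfies $|\Psi_1^{\widetilde{\omega}}|\leq C$ with a constant independent of $\widetilde{\omega}$. Keeping $\Psi_2(\widehat{\omega},s)=\widehat{p}(s)$ as before, Lemma 3.1 produces a bound of the form
\begin{equation*}
E\!\left[\int_0^T\bigl|\widehat{E}[\widetilde{\widehat{M}}_3(s)\,\widehat{p}(s)\,\widehat{X}^{1,\varepsilon}(s)]\bigr|^4 ds\right]\leq \varepsilon^2\rho(\varepsilon)
\end{equation*}
with $\rho$ independent of $\widetilde{\omega}$. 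An application of Fubini then allows me to take $\widetilde{E}$ on both sides to obtain (iii). The main obstacle in the whole argument is really packaged inside Lemma 3.1; once that is granted, the corollary is a straightforward bookkeeping exercise in matching the roles of the various processes and invoking the bounds in (H3.3) and \eqref{equ 3.9-1}.
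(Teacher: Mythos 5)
Your proposal is correct and is essentially the paper's own argument: the paper offers no written proof of Corollary 3.5 beyond the phrase ``From Lemma 3.1, we have the following estimates,'' and the intended reading is exactly your instantiation of $\Psi_1$ by the (uniformly bounded, by (H3.3)) measure-derivatives of $f$ and of $\Psi_2$ by $1$ or $\widehat{p}$ with the moment bound \eqref{equ 3.9-1}. The only point worth making explicit in part (iii) is that the function $\rho$ produced by Lemma~3.1 for fixed $\widetilde{\omega}$ is bounded uniformly in $\widetilde{\omega}$ (all constants in the lemma's proof depend only on the uniform bound on $\Psi_1$ and the moments of $\Psi_2$), so that $\widetilde{E}[\rho_{\widetilde{\omega}}(\varepsilon)]\to 0$ by dominated convergence before you apply Fubini.
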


\begin{remark}
 Estimates (\ref{equ 3.3-2}) and (\ref{equ 3.32}) are powerful tools to calculate
 the mean-field terms, especially those involving the derivatives of the coefficients with respect to
 the measure and the first-order variation $X^{1,\varepsilon}$, appearing in the  expansion of $X^\varepsilon$
 and $Y^\varepsilon$. As an example, the reader can refer to the proof of (\ref{equ 4.8-4}) and  the expansion of $I_{22}(s)$ in Appendix.
\end{remark}

Let us now analyse the second-order adjoint matrix-valued system. We set
$K^\varepsilon(t):=(X^{1,\varepsilon}(t))^2,$ $K_1^\varepsilon(t):= X^{1,\varepsilon}(t)X^{1,\varepsilon}(t-l) $.
First, applying It\^{o}'s formula to $K^\varepsilon(t)$ we have
\begin{equation}\label{equ 3.10}
  \left\{
 \begin{aligned}
d K^\varepsilon(t)&=\Big\{
\Big(2b_x(t)+(\sigma_x(t))^2\Big)K^\varepsilon(t)+\Big(2b_{x'}(t)+2\sigma_x(t)\sigma_{x'}(t)\Big)K_1^\varepsilon(t)
+\Big(\sigma_{x'}(t)\Big)^2K^\varepsilon(t-l)\\
&\quad+(\Delta \sigma(t))^2\mathbbm{1}_{E_\varepsilon}(t)\Big\}dt
+\Big\{
2\sigma_x(t)K^\varepsilon(t)+2\sigma_{x'}(t)K^\varepsilon_1(t)\Big\}dW_t+L_1(t)dt+L_2(t)dW_t,\\
K^\varepsilon(t)&=0, \ t\in[-l,0],
\end{aligned}
   \right.
\end{equation}
where
\begin{equation}\label{equ 3.11}
 \begin{aligned}
L_1(t)&=\mathbbm{1}_{E_\varepsilon}(t)\Big\{2 \Delta b(t)X^{1,\varepsilon}(t)+2\sigma_x(t)\Delta \sigma(t)X^{1,\varepsilon}(t)
+2\sigma_{x'}(t)\Delta \sigma(t)X^{1,\varepsilon}(t-l)\Big\}\\
&\quad+\Big\{2\widehat{E}[\widehat{b}^*_\nu(t)\widehat{X}^{1,\varepsilon}(t)]X^{1,\varepsilon}(t)
+(\widehat{E}[\widehat{\sigma}^*_\nu(t)\widehat{X}^{1,\varepsilon}(t)])^2
+2\sigma_x(t)X^{1,\varepsilon}(t)\widehat{E}[\widehat{\sigma}^*_\nu(t)\widehat{X}^{1,\varepsilon}(t)]\\
&\quad +2\sigma_{x'}(t)X^{1,\varepsilon}(t-l)\widehat{E}[\widehat{\sigma}^*_\nu(t)\widehat{X}^{1,\varepsilon}(t)]
+2\Delta \sigma(t)\widehat{E}[\widehat{\sigma}^*_\nu(t)\widehat{X}^{1,\varepsilon}(t)]\mathbbm{1}_{E_\varepsilon}(t)\Big\},\\
L_2(t)&= 2\Delta \sigma(t)X^{1,\varepsilon}(t)\mathbbm{1}_{E_\varepsilon}(t)
+\widehat{E}[\widehat{\sigma}^*_\nu(t)\widehat{X}^{1,\varepsilon}(t)]X^{1,\varepsilon}(t).
\end{aligned}
\end{equation}
From Remark \ref{re 3.2}, the continuous property of $f$ with respect to $v$,
the boundness of $b_{\nu},\ \sigma_\nu$ it yields
\begin{equation}\label{equ 3.12}
E\Big [\Big(\int_0^T |L_1(t)|+|L_2(t)|dt\Big)^2\Big ]\leq \varepsilon^2\rho(\varepsilon)
\end{equation}
(see Appendix for more details).

From the structure of the equation of $K^\varepsilon(t)$, we know that $K^\varepsilon(t)$ depends not only
on the quadratic terms $K^\varepsilon(t), K^\varepsilon(t-l)$, but also on the mixed term  $K_1^\varepsilon(t)$.
That means that it is impossible that $K^\varepsilon(t)$ solves  an equation in a closed form of $K^\varepsilon(t), K^\varepsilon(t-l)$.
So we are ready to consider an auxiliary equation $K_1^\varepsilon(t)$.
Let us first introduce the equation of $X^{1,\varepsilon}(t-l)$.
Notice $X^{1,\varepsilon}$ is not null only after positive times, then
\begin{equation}\label{equ 3.13}
  \left\{
 \begin{aligned}
d X^{1,\varepsilon}(t-l)&=\Big\{
b_x(t-l)X^{1,\varepsilon}(t-l)+b_{x'}(t-l)X^{1,\varepsilon}(t-2l)+\widehat{E}[\widehat{b}^*_\nu(t-l)\widehat{X}^{1,\varepsilon}(t-l)]\Big\}dt\\
&\quad+\Big\{
\sigma_x(t-l)X^{1,\varepsilon}(t-l)+\sigma_{x'}(t-l)X^{1,\varepsilon}(t-2l)+\widehat{E}[\widehat{\sigma}^*_\nu(t-l)\widehat{X}^{1,\varepsilon}(t-l)]\Big\}dW_t\\
&\quad +\Big\{\Delta b(t-l)\mathbbm{1}_{E_\varepsilon}(t-l)\Big\}dt
+\Big\{\Delta \sigma(t-l)\mathbbm{1}_{E_\varepsilon}(t-l)\Big\}dW_t,\ t\in[l,T],\\
X^{1,\varepsilon}(t-l)&=0,\ t\in[-l,l].
\end{aligned}
\right.
\end{equation}
In our case,  due to $t<T\leq2l$, hence $X^{1,\varepsilon}(t-2l)\equiv0,\ t\in[0,T].$ Consequently,

\begin{equation}\label{equ 3.14}
\left\{
 \begin{aligned}
dK^\varepsilon_1(t)&=\Big\{
\Big(b_x(t)+b_x(t-l)+\sigma_x(t)\sigma_x(t-l)\Big)K^\varepsilon_1(t)+
\Big(b_{x'}(t)+\sigma_{x'}(t)\sigma_x(t-l)\Big)K^\varepsilon(t-l)
\Big\}dt\\
&\quad+\Big\{\Big(\sigma_x(t)+\sigma_x(t-l)\Big)K^\varepsilon_1(t)+\sigma_{x'}(t)K^\varepsilon(t-l)\Big\}dW_t
+L_3(t)dt+L_4(t)dW_t,\\
&\quad+\{\Delta \sigma(t)\Delta \sigma(t-l)\mathbbm{1}_{E_\varepsilon}(t-l)\mathbbm{1}_{E_\varepsilon}(t)\}dt,\ t\in[l,T],\\
K^\varepsilon_1(t)&=0,\ t\in[-l, l],
\end{aligned}
\right.
\end{equation}
where
$$
\begin{aligned}
L_3(t)&=X^{1,\varepsilon}(t)\Delta b(t-l)\mathbbm{1}_{E_\varepsilon}(t-l)+\Delta b(t)X^{1,\varepsilon}(t-l)\mathbbm{1}_{E_\varepsilon}(t)
+\sigma_x(t)\Delta \sigma(t-l)X^{1,\varepsilon}(t)\mathbbm{1}_{E_\varepsilon}(t-l)\\
&\quad+\sigma_{x'}(t)\Delta\sigma(t-l)X^{1,\varepsilon}(t-l)\mathbbm{1}_{E_\varepsilon}(t-l)
+\sigma_x(t-l)\Delta \sigma(t)X^{1,\varepsilon}(t-l)\mathbbm{1}_{E_\varepsilon}(t)\\
&\quad+\Delta \sigma(t-l)\mathbbm{1}_{E_\varepsilon}(t-l)\widehat{E}[\widehat{\sigma}^*_\nu(t)\widehat{X}^{1,\varepsilon}(t)]
+\Delta \sigma(t)\mathbbm{1}_{E_\varepsilon}(t)\widehat{E}[\widehat{\sigma}^*_\nu(t-l)\widehat{X}^{1,\varepsilon}(t-l)]\\
\end{aligned}
$$
$$
\begin{aligned}
&\quad+X^{1,\varepsilon}(t-l)\widehat{E}[\widehat{b}^*_\nu(t)\widehat{X}^{1,\varepsilon}(t)]
+\widehat{E}[\widehat{\sigma}^*_\nu(t-l)\widehat{X}^{1,\varepsilon}(t-l)](\sigma_x(t)X^{1,\varepsilon}(t)+\sigma_{x'}(t)X^{1,\varepsilon}(t-l))\\
&\quad+\sigma_x(t-l)X^{1,\varepsilon}(t-l)\widehat{E}[\widehat{\sigma}^*_\nu(t)\widehat{X}^{1,\varepsilon}(t)]
+X^{1,\varepsilon}(t)\widehat{E}[\widehat{b}^*_\nu(t-l)\widehat{X}^{1,\varepsilon}(t-l)],\\
L_4(t)&=\Delta \sigma(t-l)X^{1,\varepsilon}(t)\mathbbm{1}_{E_\varepsilon}(t-l)+\Delta \sigma(t)X^{1,\varepsilon}(t-l))\mathbbm{1}_{E_\varepsilon}(t)\\
&\quad+X^{1,\varepsilon}(t)\widehat{E}[\widehat{\sigma}^*_\nu(t-l)\widehat{X}^{1,\varepsilon}(t-l)]
+X^{1,\varepsilon}(t-l)\widehat{E}[\widehat{\sigma}^*_\nu(t)\widehat{X}^{1,\varepsilon}(t)].
\end{aligned}
$$
Note that  $\mathbbm{1}_{E_\varepsilon}(t-l)\mathbbm{1}_{E_\varepsilon}(t)\equiv0$ for enough small $\varepsilon$, for example $\varepsilon<\frac{l}{3}$. Moreover, following the proof of (\ref{equ 3.12}) (see Appendix),
from Remark \ref{re 3.2} again, it yields
\begin{equation}\label{equ 3.15}
E\Big [\Big(\int_0^T |L_3(t)|+|L_4(t)|dt\Big)^2\Big ]\leq \varepsilon^2\rho(\varepsilon).
\end{equation}

The coupled characteristic of the equations (\ref{equ 3.10}) and (\ref{equ 3.14}) inspires us to consider the
following coupled second-order adjoint equation, which is different to the
classical case:
\begin{equation}\label{equ 3.16}
  \left\{
   \begin{aligned}
-dP(s)&=G(s)ds-Q(s)dW(s)\ s\in[0,T],\\
P(T)&=\Phi_{xx}(T)+\widehat{E}[\widehat{\Phi}^*_{\nu a}(T)], P(t)=0,\ t\in(T,T+l],\ Q(t)=0,\ t\in[T, T+l],\\
-dP_1(s)&=G_1(s)ds-Q_1(s)dW(s)\ s\in[0,T],\\
P_1(T)&=0,\ t\in[T,T+l],\ Q_1(t)=0,\ t\in[T, T+l],
\end{aligned}
   \right.
\end{equation}
where
\begin{equation}\label{equ 3.17}
\begin{aligned}
G(t)&=P(t)\Big(f_y(t)+\widehat{E}[\widehat{f}^*_{\nu_2}(t)]+2f_z(t)\sigma_x(t)+2b_x(t)+(\sigma_x(t))^2\Big)
+E^{\mathcal{F}_t}[P(t+l)(\sigma_{x'}(t+l))^2]\\
&\quad+Q(t)(f_z(t)+2\sigma_x(t))
+E^{\mathcal{F}_t}[2P_1(t+l)(\sigma_{x'}(t+l)+b_{x'}(t+l)+\sigma_{x'}(t+l)\sigma_x(t))]\\
&\quad+E^{\mathcal{F}_t}[2Q_1(t+l)\sigma_{x'}(t+l)]
+p(t)(b_{xx}(t)+f_z(t)\sigma_{xx}(t))+\widehat{E}[\widehat{p}(t)(\widehat{b}^*_{\nu a}(t)+\widehat{f}_z(t)\widehat{\sigma}^*_{\nu a}(t))]\\
&\quad+E^{\mathcal{F}_t}[p(t+l)(b_{x'x'}(t+l)+f_z(t+l)\sigma_{x'x'}(t+l))]
+q(t)\sigma_{xx}(t)+\widehat{E}[\widehat{q}(t)\widehat{\sigma}^*_{\nu a}(t)]\\
&\quad+E^{\mathcal{F}_t}[q(t+l)\sigma_{x'x'}(t+l)]+UD^2fU^\intercal
+E^{\mathcal{F}_t}[f_{x'x'}(t+l)]
+\widehat{E}[\widehat{f}^*_{\mu_1 a_1}(t)]+(p(t))^2\widehat{E}[\widehat{f}^*_{\mu_2 a_2}(t)],\\
G_1(t)&=P_1(t)\Big(b_x(t)+b_x(t-l)+\sigma_x(t)\sigma_x(t-l)+f_y(t)+f_z(t)(\sigma_{x'}(t)+\sigma_x(t-l))+\widehat{E}[\widehat{f}^*_{\mu_2}(t)]\Big)\\
&\quad+Q_1(t)\Big(\sigma_x(t)+\sigma_x(t-l)+f_z(t)\Big)
+P(t)\Big( b_{x'}(t)+\sigma_x(t)\sigma_{x'}(t)+f_z(t)\sigma_{x'}(t)\Big)\\
&\quad+Q(t) \sigma_{x'}(t)
+p(t)\Big(b_{xx'}(t)+f_z(t)\sigma_{xx'}(t)+f_{y x'}(t)+\sigma_x(t)f_{zx'}(t)\Big)+q(t)(\sigma_{xx'}(t)+f_{zx'}(t))\\
&\quad+f_{xx'}(t),
\end{aligned}
\end{equation}
and $U=(1,p(t),p(t)\sigma_x(t)+q(t)),\
D^2f=\begin{pmatrix}
f_{xx}(t)&f_{xy}(t)&f_{xz}(t)\\
f_{yx}(t)&f_{yy}(t)&f_{yz}(t)\\
f_{zx}(t)&f_{zy}(t)&f_{zz}(t)
\end{pmatrix}.$

Obviously, according to Guo, Xiong, Zheng  \cite{GXZ}, under the assumptions (H3.1)-(H3.3)
the  anticipated linear BSDE (\ref{equ 3.16})-(\ref{equ 3.17}) possesses a unique solution
$\Big((P,Q),(P_1,Q_1)\Big)$ satisfying, for $\beta\geq2$,
\begin{equation}\label{equ 3.17-1}
E\Big[\sup_{t\in[0,T+l]}(|P(t)|^\beta+|P_1(t)|^\beta)+(\int_0^{T+l}|Q(t)|^2+|Q_1(t)|^2dt)^\frac{\beta}{2}\Big]\leq C_\beta.
\end{equation}

\begin{remark}\label{re 3.5}
If $T\leq l$, the control problem becomes the non delay case. In fact, the term $X^{1,\varepsilon}(t-l)$ in the first-order
variational equation will vanish. In this setting, our stochastic maximum principle reduces to the global stochastic maximum principle
but driven by a mean-field forward-backward control system.
\end{remark}

\begin{remark}\label{re 3.6}
Although in above we present the case of one pointwise delay, our approach is also suitable for the pointwise delay case,
i.e., $N l< T\leq(N+1)l,\ N\in\mathbb{N}, N>1$. Indeed, in this situation the first-order variational equation
(\ref{equ 3.2}) can be extended to $(X^{1,\varepsilon}(t), X^{1,\varepsilon}(t-l),\cdot\cdot\cdot,
X^{1,\varepsilon}(t-Nl))$ naturally,
and accordingly, the equation $K^\varepsilon(t)$ will depend on $(X^{1,\varepsilon}(t))^2, (X^{1,\varepsilon}(t-l))^2,\cdot\cdot\cdot,
(X^{1,\varepsilon}(t-Nl))^2$.
Hence, some corresponding auxiliary  equations, similar to $K^1(t)$,
$K^2(t):=X^{1,\varepsilon}(t)X^{1,\varepsilon}(t-2l),\cdot\cdot\cdot, K^N(t):=X^{1,\varepsilon}(t)X^{1,\varepsilon}(t-Nl),$
will be considered, and subsequently, the second-order adjoint system changes to
a system of equations for high-dimensional matrix-valued processes, just like the statement in Remark \ref{re 4.2}.

\end{remark}

\section{{\protect \large {Expansion of cost functional $Y^\varepsilon$}}}

In order to obtain the general stochastic maximum principle, Peng \cite{Peng1} put forward to consider the
second-order Taylor expansion of the variation $Y^\varepsilon$, i.e.,
\begin{equation}\label{equ 4.0}
Y^\varepsilon(t)=Y^*(t)+p(t)(X^{1,\varepsilon}(t)+X^{2,\varepsilon}(t))+\frac{1}{2}P(t)(X^{1,\varepsilon}(t))^2+o(\varepsilon),
\end{equation}
where $p,P$ are the solutions of the first- and second-order adjoint equation, respectively.\\
Generally speaking, when the coefficient $f$ depends on $(y,z)$,  (\ref{equ 4.0}) does not hold true any more since
the term $p(t)\delta\sigma(t)\mathbbm{1}_{E_\varepsilon}(t)$ in the variation of $z$ is $O(\varepsilon)$, but not $o(\varepsilon)$.
Hence, Hu \cite{Hu} constructed an auxiliary BSDE and considered the following expansion:
\begin{equation}\label{equ 4.0-1}
Y^\varepsilon(t)=Y^*(t)+p(t)(X^{1,\varepsilon}(t)+X^{2,\varepsilon}(t))+\frac{1}{2}P(t)(X^{1,\varepsilon}(t))^2+\breve{Y}(t)+o(\varepsilon),\
t\in[0,T],
\end{equation}
where $\breve{Y}$ is the solution of an auxiliary  BSDE.

Our argument in some degree follows their scheme, but there are still potential difficulties  due to
the appearance of delay term in the general  mean-field case. But it turns out that these difficulties can
be solved by considering the auxiliary equation of mixed term $X^{1,\varepsilon}(s)X^{1,\varepsilon}(s-l)$,  by constructing a
new auxiliary mean-field BSDE and by applying the new generic estimate, see Lemma \ref{le 3.1}. Let us now state it in detail.

We first prove that there exists a process $(\breve{Y}(t))_{t\in[0,T]}$ with $\breve{Y}(T)=0$, such that
for $t\in[0,T]$, $P$-a.s.,
\begin{equation}\label{equ 4.0-2}
Y^\varepsilon(t)=Y^*(t)+p(t)(X^{1,\varepsilon}(t)+X^{2,\varepsilon}(t))
+\frac{1}{2}P(t)(X^{1,\varepsilon}(t))^2+P_1(t)X^{1,\varepsilon}(t)X^{1,\varepsilon}(t-l)
+\breve{Y}(t)+o(\varepsilon),
\end{equation}
where the convergence is in $L^2(\Omega,C[0,T])$ sense.

For this, consider the following mean-field BSDE:
\begin{equation}\label{equ 4.1}
  \left\{
   \begin{aligned}
-d\breve{Y}(s)&= f_y(t)\breve{Y}(t)+f_z(t)\breve{Z}(t)+\widehat{E}[\widehat{f}_{\mu_2}(t)\widehat{\breve{Y}}(t)]
+\Big(A_1(t)+\Delta f(t)\mathbbm{1}_{E_\varepsilon}(t)\Big)dt\\
&\quad-\breve{Z}(t)dW(t),\ t\in[0,T),\\
\breve{Y}(t)&=0, \ \breve{Z}(t)=0,\ t\in[T,T+l],
\end{aligned}
   \right.
\end{equation}
where
\begin{equation}\label{equ 4.1-1}
   \begin{aligned}
A_1(t)&:=p(t)\Delta b(t)+q(t)\Delta \sigma(t)+\frac{1}{2}P(t)( \Delta\sigma(t))^2,\\
\Delta f(t)&:=f(t,X^*(t),X^*(t-l),Y^*(t),Z^*(t)+p(t)\Delta \sigma(t), P_{(X^*(t),Y^*(t))},v(t))\\
            &\quad-f(t,X^*(t),X^*(t-l),Y^*(t),Z^*(t), P_{(X^*(t),Y^*(t))},u^*(t)).
\end{aligned}
\end{equation}

It is clear that under the assumptions (H3.1), (H3.2) the equation (\ref{equ 4.1}) has a unique solution
$(\breve{Y},\breve{Z})\in \mathcal{S}^2_{\mathbb{F}}(0,T+l)\times \mathcal{H}^2_{\mathbb{F}}(0,T+l)$.
Furthermore, from standard argument of classical BSDE, we have, for $\beta\geq2$,
\begin{equation}\label{equ 4.2}
   \begin{aligned}
E[\sup_{t\in[0,T+l]}|\breve{Y}(t)|^\beta+(\int_0^{T+l}|\breve{Z}(t)|^2dt)^\frac{\beta}{2}]\leq \varepsilon^{\frac{\beta}{2}}\rho(\varepsilon).
\end{aligned}
\end{equation}

\begin{theorem}\label{th 4.1}
Suppose the assumptions (H3.1)-(H3.3) hold true, then we have the following
second-order expansion of $Y^\varepsilon$
\begin{equation}\label{equ 4.3}
   \begin{aligned}
&E\Big[\sup_{t\in[0,T+l]}|Y^\varepsilon(t)-Y^*(t)-p(t)(X^{1,\varepsilon}(t)+X^{2,\varepsilon}(t))
-\frac{1}{2}P(t)(X^{1,\varepsilon}(t))^2\\
&\qquad\qquad\qquad\qquad\qquad\qquad\qquad -P_1(t)X^{1,\varepsilon}(t)X^{1,\varepsilon}(t-l)
-\breve{Y}(t)|^2 \Big]\leq \varepsilon^2\rho(\varepsilon).
\end{aligned}
\end{equation}
\end{theorem}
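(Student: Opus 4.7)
Set
\begin{equation*}
\xi^\varepsilon(t):=Y^\varepsilon(t)-Y^*(t)-p(t)\bigl(X^{1,\varepsilon}(t)+X^{2,\varepsilon}(t)\bigr)-\tfrac{1}{2}P(t)\bigl(X^{1,\varepsilon}(t)\bigr)^2-P_1(t)X^{1,\varepsilon}(t)X^{1,\varepsilon}(t-l)-\breve{Y}(t).
\end{equation*}
The strategy is to derive a backward equation for $\xi^\varepsilon$ with terminal value of order $o(\varepsilon)$ in $L^2$ and with a driver that, modulo a Lipschitz part in $(\xi^\varepsilon,\eta^\varepsilon)$ (where $\eta^\varepsilon$ is the associated martingale integrand), is of order $o(\varepsilon)$ in a suitable norm. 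A standard a priori BSDE estimate will then yield the desired bound $E[\sup_t|\xi^\varepsilon(t)|^2]\leq\varepsilon^2\rho(\varepsilon)$.

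First I would compute $d\xi^\varepsilon(t)$ by applying It\^o's formula to each summand. The dynamics of $Y^\varepsilon-Y^*$ come from subtracting the BSDEs in (\ref{equ 3.1}) and performing a second-order Taylor expansion of $f$ and $\Phi$ around the optimal trajectory, using (H3.2)--(H3.3); the expansion of the law-dependent arguments is handled via (\ref{equ 1.1-2}), with the $\partial^2_{\mu\mu}$ remainder being negligible (as recalled in the introduction, point (ii)). The products $p(X^{1,\varepsilon}+X^{2,\varepsilon})$, $\tfrac{1}{2}P(X^{1,\varepsilon})^2$ and $P_1 X^{1,\varepsilon}(t)X^{1,\varepsilon}(t-l)$ are differentiated by combining the first and second variational equations (\ref{equ 3.2})--(\ref{equ 3.3}), the $K^\varepsilon,K_1^\varepsilon$ dynamics (\ref{equ 3.10}),(\ref{equ 3.14}) and the adjoint equations (\ref{equ 3.9}),(\ref{equ 3.16})--(\ref{equ 3.17}); the anticipated terms $p(t+l),P(t+l),P_1(t+l),Q(t+l),Q_1(t+l)$ appearing in the drivers $F,G,G_1$ are designed precisely to cancel, after conditional expectation and a time shift $s\mapsto s+l$, the contributions coming from $X^{1,\varepsilon}(t-l)$ and $X^{2,\varepsilon}(t-l)$ in the forward variational equations. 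Finally, the BSDE (\ref{equ 4.1}) for $\breve{Y}$ is built to absorb the residual $O(\varepsilon)$ terms $p\Delta b+q\Delta\sigma+\tfrac12 P(\Delta\sigma)^2$ and $\Delta f$ on $E_\varepsilon$.

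Next I would verify the cancellations. The terminal condition satisfies $\xi^\varepsilon(T)=\Phi(X^\varepsilon(T),P_{X^\varepsilon(T)})-\Phi(X^*(T),P_{X^*(T)})-p(T)(X^{1,\varepsilon}(T)+X^{2,\varepsilon}(T))-\tfrac12 P(T)(X^{1,\varepsilon}(T))^2=o(\varepsilon)$ in $L^2$ by the second-order expansion of $\Phi$ combined with Proposition \ref{pro 3.3} and the terminal values $p(T)=\Phi_x(T)+\widehat{E}[\widehat{\Phi}^*_\nu(T)]$, $P(T)=\Phi_{xx}(T)+\widehat{E}[\widehat{\Phi}^*_{\nu a}(T)]$. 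For the driver, I would regroup terms by their origin: the purely local terms (those without delay or measure derivatives) cancel as in Hu \cite{Hu}; the pure-delay terms cancel after invoking the anticipation shift and using the conditional-expectation tower property together with $P_1(T)=0$; the mean-field terms are handled by (\ref{equ 1.1-2}) together with Lemma \ref{le 3.1}, Remark \ref{re 3.2} and Corollary \ref{cor 3.5}, which are precisely the estimates needed to control quantities of the form $\widehat{E}[\widehat{M}(s)\widehat{p}(s)\widehat{X}^{1,\varepsilon}(s)]$ arising from the first-order $\mu$-derivatives of $b,\sigma,f$. A detailed example of such a grouping is the expansion of $I_{22}$ promised in the Appendix.

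The main obstacle, as anticipated in the introduction, is the mixed cross-term $P_1(t)X^{1,\varepsilon}(t)X^{1,\varepsilon}(t-l)$: it couples present and past variations, so its It\^o derivative produces contributions $\sigma_{x'}(t)P(t)X^{1,\varepsilon}(t-l)$-type cross terms that must match exactly the $b_{xx'},\sigma_{xx'},b_{x'x'},\sigma_{x'x'}$ contributions coming from the Taylor expansion of $b,\sigma,f$ and from the $K^\varepsilon,K_1^\varepsilon$ dynamics. Careful bookkeeping of these cross-and delay- terms, together with the anticipated drivers $G,G_1$, is what is required to show that no un-cancelled $O(\varepsilon)$ term survives. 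Once this bookkeeping is done, a standard BSDE a priori estimate (together with Gronwall and Proposition \ref{pro 3.3}) delivers $E[\sup_{t\in[0,T+l]}|\xi^\varepsilon(t)|^2]\leq\varepsilon^2\rho(\varepsilon)$, completing the proof.
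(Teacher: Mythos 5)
Your plan coincides with the paper's own proof: the paper sets $M(t)=p(t)(X^{1,\varepsilon}(t)+X^{2,\varepsilon}(t))+\tfrac12 P(t)K^\varepsilon(t)+P_1(t)K_1^\varepsilon(t)$, computes $dM$ by It\^o, defines $\Delta Y=Y^\varepsilon-Y^*-\breve Y-M$ (your $\xi^\varepsilon$), splits the driver difference into a Lipschitz-in-$(\Delta Y,\Delta Z)$ part $I_1$, a negligible spike part $(I_{2,1}-I_{2,2})\mathbbm{1}_{E_\varepsilon}$, and a Taylor-expanded part $I_{2,2}$ whose terms cancel against the adjoint drivers $F,G,G_1$ (with the delay contributions handled exactly by the time-shift identity for the anticipated terms and the mean-field contributions by Lemma \ref{le 3.1}, Remark \ref{re 3.2} and Corollary \ref{cor 3.5}), and concludes by Gronwall. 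Your proposal is a correct outline of this same argument; the remaining work is only the bookkeeping you yourself identify.
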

\begin{proof}
The proof is split into two steps.

\textbf{Step 1.}\ Define
\begin{equation}\label{equ 4.4}
M(t):=p(t)(X^{1,\varepsilon}(t)+X^{2,\varepsilon}(t))
+\frac{1}{2}P(t)K^\varepsilon(t)
 +P_1(t)K^\varepsilon_1(t).
 \end{equation}
Following It\^{o}'s formula, one can check
\begin{equation}\label{equ 4.5}
\begin{aligned}
dM(t)&=\Big\{ A_1(t)\mathbbm{1}_{E_\varepsilon}(t)
+A_2(t)+A_3(t)+\overline{A}_3(t)+A_4(t)\Big\}dt\\
&\quad +\Big\{ p(t)\Delta \sigma(t)\mathbbm{1}_{E_\varepsilon}(t)+B_2(t)+B_3(t)+\overline{B}_3(t)+B_4(t)\Big\}dW_t.
\end{aligned}
\end{equation}
Here $A_1,A_2,\cdot\cdot\cdot, B_4$ see appendix.\\
From Remark {\ref{re 3.2}}, we have
\begin{equation}\label{equ 4.6}
E\Big[\Big(\int_0^T|A_4(s)+B_4(s)|ds\Big)^2\Big]\leq \varepsilon^2\rho(\varepsilon).
\end{equation}
Hence, (\ref{equ 4.5}) can be written as
\begin{equation}\label{equ 4.6-1}
\begin{aligned}
dM(t)&=\Big\{ A_1(t)\mathbbm{1}_{E_\varepsilon}(t)
+A_2(t)+A_3(t)+\overline{A}_3(t)\Big\}dt\\
&\quad +\Big\{ p(t)\Delta \sigma(t)\mathbbm{1}_{E_\varepsilon}(t)+B_2(t)+B_3(t)+\overline{B}_3(t) \Big\}dW_t+o(\varepsilon).
\end{aligned}
\end{equation}

\textbf{Step 2.} Define
$$
\begin{aligned}
\Delta Y(t)&=Y^\varepsilon(t)-Y^*(t)-\breve{Y}(t)-M(t),\\
\Delta Z(t)&=Z^\varepsilon(t)-Z^*(t)-\breve{Z}(t)-( p(t)\Delta \sigma(t)\mathbbm{1}_{E_\varepsilon}(t)+B_2(t)+B_3(t)+\overline{B}_3(t)).
\end{aligned}
$$
Then
\begin{equation}\label{equ 4.7}
\begin{aligned}
\Delta Y(t)=\Delta Y(T)
&+\int_t^T\Big(f(s,X^\varepsilon(s),X^\varepsilon(t-l),Y^\varepsilon(s),Z^\varepsilon(s),P_{(X^\varepsilon(s),Y^\varepsilon(s))},u^\varepsilon(s))\\
&\quad-f(s,X^*(s),X^*(t-l),Y^*(s),Z^*(s),P_{(X^*(s),Y^*(s))},u^*(s))\\
&\quad-(f_y(s)\breve{Y}(s)+f_{z}(s)\breve{Z}(s)+\widehat{E}[\widehat{f}_{\mu_2}(s)\widehat{\breve{Y}}(s)]+\Delta f(s)\mathbbm{1}_{E_\varepsilon}(s))\\
&\quad-(A_2(t)+A_3(t)+\overline{A}_3(t))\Big)ds-\int_t^T\Delta Z(s)dW_s+o(\varepsilon).
\end{aligned}
\end{equation}

Let us now analyse
\begin{equation}\label{equ 4.8}
\begin{aligned}
I(s):&=f(s,X^\varepsilon(s),X^\varepsilon(s-l),Y^\varepsilon(s),Z^\varepsilon(s),P_{(X^\varepsilon(s),Y^\varepsilon(s))},u^\varepsilon(s))\\
&-f(s,X^*(s),X^*(s-l),Y^*(s),Z^*(s),P_{(X^*(s),Y^*(s))},u^*(s)).
\end{aligned}
\end{equation}
First, recall the definitions of $\Delta Y, \Delta Z$, (\ref{equ 4.8}) can read as
\begin{equation}\label{equ 4.8-1}
\begin{aligned}
&I(s):=I_1(s)+I_2(s)
+f(s,X^*(s),X^*(s-l),Y^*(s),Z^*(s)+p(s)\Delta(s)\mathbbm{1}_{E_\varepsilon}(s),P_{(X^*(s),Y^*(s))},u^\varepsilon(s))\\
&\qquad-f(s,X^*(s),X^*(s-l),Y^*(s),Z^*(s),P_{(X^*(s),Y^*(s))},u^*(s))\\
&\qquad=I_1(s)+I_2(s)+\mathbbm{1}_{E_\varepsilon}(s)\triangle f(s),
\end{aligned}
\end{equation}
where
\begin{equation}\label{equ 4.8-1}
\begin{aligned}
I_1(s):&=f(s,X^{\varepsilon}(s),X^\varepsilon(s-l),Y^{\varepsilon}(s),Z^{\varepsilon}(s),P_{X^{\varepsilon}(s),Y^{\varepsilon}(s)},u^\varepsilon(s))\\
       & -f(s,X^*(s)+X^{1,\varepsilon}(s)+X^{2,\varepsilon}(s),
       X^*(s-l)+X^{1,\varepsilon}(s-l)+X^{2,\varepsilon}(s-l),\\
       &\quad Y^*(s)+\breve{Y}(s)+M(s),Z^*(s)+\breve{Z}(s)+(p(s)\Delta \sigma(s)\mathbbm{1}_{E_\varepsilon}(s)+B_2(t)+B_3(s)+\overline{B}_3(s)),\\
        &\quad P_{(X^*(s)+X^{1,\varepsilon}(s)+X^{2,\varepsilon}(s),Y^*(s)+\breve{Y}(s)+M(s))},u^\varepsilon(s)),\\
I_2(s):&=f(s,X^*(s)+X^{1,\varepsilon}(s)+X^{2,\varepsilon}(s),
       X^*(s-l)+X^{1,\varepsilon}(s-l)+X^{2,\varepsilon}(s-l),\\
       &\quad Y^*(s)+\breve{Y}(s)+M(s),Z^*(s)+\breve{Z}(s)+(p(s)\Delta \sigma(s)\mathbbm{1}_{E_\varepsilon}(s)+B_2(s)+B_3(s)+\overline{B}_3(s)),\\
        &\quad P_{(X^*(s)+X^{1,\varepsilon}(s)+X^{2,\varepsilon}(s),Y^*(s)+\breve{Y}(s)+M(s))},u^\varepsilon(s))\\
        &-f(s,X^*(s),X^*(s-l),Y^*(s),Z^*(s)+p(s)\Delta\sigma(s)\mathbbm{1}_{E_\varepsilon}(s),P_{(X^*(s),Y^*(s))}, u^\varepsilon(s)).
\end{aligned}
\end{equation}
From the Lipschitz property of $f$ and the fact $W(P_\xi,P_\eta)\leq \{E|\xi-\eta|^2\}^\frac{1}{2},\ \xi,\eta\in L^2(\Omega,\mathcal{F}_T,P)$, we know
\begin{equation}\label{equ 4.8-2}
\begin{aligned}
|I_1(s)|&\leq |X^\varepsilon(s)-X^*(s)-X^{1,\varepsilon}(s)-X^{2,\varepsilon}(s)|
+|X^\varepsilon(s-l)-X^*(s-l)-X^{1,\varepsilon}(s-l)-X^{2,\varepsilon}(s-l)|\\
&+|\Delta Y(s)|+|\Delta Z(s)|
+\{E|X^\varepsilon(s)-X^*(s)-X^{1,\varepsilon}(s)-X^{2,\varepsilon}(s)|^2\}^\frac{1}{2}
+\{E|\Delta Y(s)|^2\}^\frac{1}{2}.
\end{aligned}
\end{equation}
As for $I_2(s)$, obviously, $$I_2(s)=\mathbbm{1}_{E_\varepsilon}(s)I_{2,1}(s)+\mathbbm{1}_{(E_\varepsilon)^c}(s)I_{2,2}(s)
=I_{2,2}(s)+\mathbbm{1}_{E_\varepsilon}(s)(I_{2,1}(s)-I_{2,2}(s)),$$
here
\begin{equation}\label{equ 4.8-3}
\begin{aligned}
I_{2,1}(s)&=f(s,X^*(s)+X^{1,\varepsilon}(s)+X^{2,\varepsilon}(s),
       X^*(s-l)+X^{1,\varepsilon}(s-l)+X^{2,\varepsilon}(s-l),\\
       &\quad \ Y^*(s)+\breve{Y}(s)+M(s),Z^*(s)+\breve{Z}(s)+p(s)\Delta \sigma(s)+B_2(s)+B_3(s)+\overline{B}_3(s),\\
        &\quad \ P_{(X^*(s)+X^{1,\varepsilon}(s)+X^{2,\varepsilon}(s),Y^*(s)+\breve{Y}(s)+M(s))},v(s))\\
        &-f(s,X^*(s),X^*(s-l),Y^*(s),Z^*(s)+p(s)\Delta\sigma(s),P_{(X^*(s),Y^*(s))}, v(s)),\\
I_{2,2}(s)&=f(s,X^*(s)+X^{1,\varepsilon}(s)+X^{2,\varepsilon}(s),
       X^*(s-l)+X^{1,\varepsilon}(s-l)+X^{2,\varepsilon}(s-l),\\
       &\quad \ Y^*(s)+\breve{Y}(s)+M(s),Z^*(s)+\breve{Z}(s)+B_2(s)+B_3(s)+\overline{B}_3(s),\\
        &\quad \ P_{(X^*(s)+X^{1,\varepsilon}(s)+X^{2,\varepsilon}(s),Y^*(s)+\breve{Y}(s)+M(s))},u^*(s))\\
        &-f(s,X^*(s),X^*(s-l),Y^*(s),Z^*(s),P_{(X^*(s),Y^*(s))}, u^*(s)).
        \end{aligned}
\end{equation}
Since $f$ is Lipschitz continuous, we can obtain
$$
\begin{aligned}
|I_{2,1}(s)-I_{2,2}(s)|&\leq |X^{1,\varepsilon}(s)+X^{2,\varepsilon}(s)|+|X^{1,\varepsilon}(s-l)+X^{2,\varepsilon}(s-l)|
+|\breve{Y}(s)+M(s)|\\
&+|\breve{Z}(s)+B_2(s)+B_3(s)+\overline{B}_3(s)|
+\{E|X^{1,\varepsilon}(s)+X^{2,\varepsilon}(s)|^2\}^{\frac{1}{2}}\\
&+\{E|\breve{Y}(s)+M(s)|^2\}^{\frac{1}{2}}.\\
\end{aligned}
$$
From (\ref{equ 3.3-2}), (\ref{equ 3.4}), (\ref{equ 4.2}) we have
\begin{equation}\label{equ 4.8-4}
\begin{aligned}
E[(\int_0^T|I_{2,1}(s)-I_{2,2}(s)|\mathbbm{1}_{E_\varepsilon}(s)ds)^2]\leq \varepsilon^2\rho(\varepsilon)
 \end{aligned}
\end{equation}
(The proof of (\ref{equ 4.8-4}) refers to Appendix).

In order to complete the proof, it remains to calculate  $I_{2,2}(s)$.  Applying the Taylor expansion, see Appendix for details,  it follows
\begin{equation}\label{equ 4.9}
\begin{aligned}
I_{2,2}(s)&=f_y(s)\breve{Y}(s)+f_z(s)\breve{Z}(s)+\widehat{E}[\widehat{f}_{\mu_2}(s)\widehat{\breve{Y}}(s)]
                    +(X^{1,\varepsilon}(s)+X^{2,\varepsilon}(s))\Big(f_x(s)+f_y(s)p(s) \\
        &+f_z(s)(p(s)\sigma_x(s)+q(s))\Big)+(X^{1,\varepsilon}(s-l)+X^{2,\varepsilon}(s-l))\Big(f_{x'} (s)
                     +f_z(s)p(s)\sigma_{x'}(s)\Big)\\
      &+\widehat{E}\Big[ (\widehat{X}^{1,\varepsilon}(s)+\widehat{X}^{2,\varepsilon}(s))
             \Big (f_z(s)p(s)\widehat{\sigma}_{\nu}(s)+\widehat{f}_{\mu_1}(s)+\widehat{f}_{\mu_2}(s)\widehat{p}(s)\Big)\Big]\\
      &+\frac{1}{2}K^\varepsilon(s)\Big(f_y(s)P(s)+f_z(s)\big(p(s)\sigma_{xx}(s)+2P(s)\sigma_x(s)+Q(s)\big)\Big)\\
      &+\frac{1}{2}\widehat{E}\Big[\widehat{K}^\varepsilon(s)\Big(f_z(s)p(s)\widehat{\sigma}_{\nu a}(s)+\widehat{f}_{\mu_2}(s)\widehat{P}(s)\Big)\Big]
     +\frac{1}{2}K^\varepsilon(s-l)\Big(f_z(s)(p(s)\sigma_{x'x'}(s)+2P_1(s)\sigma_{x'}(s)) \Big)\\
       &+K^\varepsilon_1(s)
       \Big(f_y(s)P_1(s)+f_z(s)\Big(p(s)\sigma_{xx'}(s)+P(s)\sigma_{x'}(s)+P_1(s)\sigma_x(s)+P_1(s)\sigma_x(s-l)+Q_1(s)\Big)\Big)\\
       &+\widehat{E}[\widehat{f}_{\mu_2}(s)\widehat{P}_1(s)\widehat{K}^\varepsilon_1(s)]
        +\frac{1}{2}(X^{1,\varepsilon}(s))^2 UD^2fU^\intercal +\frac{1}{2}f_{x'x'}(s)K^\varepsilon(s-l)\\
        &+K^\varepsilon_1(s)(f_{xx'}(s),f_{yx'}(s),f_{zx'}(s))U^\intercal
       +\frac{1}{2}\widehat{E}[\widehat{K}^\varepsilon(s)(\widehat{f}_{\mu_1 a_1}(s)+\widehat{f}_{\mu_2 a_2}(s)(\widehat{p}(s))^2)]+o(\varepsilon),
\end{aligned}
\end{equation}
where $U=(1,p(s), p(s)\sigma_x(s)+q(s))$, $D^2f$ is the Hessian matrix of $f$ with respect to $(x,y,z)$.\\
Notice the equality
$$E[\int_0^TE^{\mathcal{F}_s}[b_{x'}(s+l)p(s+l)X^{1,\varepsilon}(s)]ds]=E[\int_0^Tb_{x'}(s)p(s)X^{1,\varepsilon}(s-l)ds],$$
which comes from a change of variable combining with the final condition for $p(s)$ and the initial condition for $X^{1,\varepsilon}(s)$,
and notice
the fact
$$
E\widehat{E}[\widehat{f}_{\mu_2}(s)\widehat{P}_1(s)\widehat{X}^{1,\varepsilon}(s)\widehat{X}^{1,\varepsilon}(s-l)]
=E\widehat{E}[\widehat{f}^*_{\mu_2}(s)P_1(s)X^{1,\varepsilon}(s)X^{1,\varepsilon}(s-l)],
$$
which comes from the definitions of $\widehat{f}_{\mu_2}(s)$ and  $\widehat{f}^*_{\mu_2}(s)$ (see (\ref{equ 3.6}))
and the independent copy assumption, i.e., $\widehat{P}_{(\widehat{X}^*(s),\widehat{Y}^*(s))}=P_{(X^*(s),Y^*(s))}$,
as well as recall the definitions of $A_2(s), A_3(s), \overline{A_3}(s),$
it yields
\begin{equation}\label{equ 4.10}
\begin{aligned}
\Delta Y(t)= \int_t^T\Big(I_1(s)+(I_{2,2}(s)-I_{2,1}(s) )\mathbbm{1}_{E_\varepsilon}(s)
\Big)ds-\int_t^T\Delta Z(s)dW_s+o(\varepsilon),\ t\in[0,T].
\end{aligned}
\end{equation}
Thanks to (\ref{equ 4.8-2}), (\ref{equ 4.8-4}) and Gronwall lemma, one gets
\begin{equation}\label{equ 4.10}
\begin{aligned}
E[\sup_{t\in[0,T]}|\Delta Y(s)|^2+\int_0^T|\Delta Z(s)|^2ds]\leq \varepsilon^2\rho(\varepsilon).
\end{aligned}
\end{equation}
We finish the proof.
\end{proof}

\begin{remark}\label{re 4.2}
Notice $K^\varepsilon(t)=(X^{1,\varepsilon}(t))^2,\ K^\varepsilon_1(t)= X^{1,\varepsilon}(t)X^{1,\varepsilon}(t-l),$
then (\ref{equ 4.0-2}) can be written as
\begin{equation}\label{equ 4.111}
\begin{aligned}
Y^\varepsilon(t)&=Y^*(t)+p(t)(X^{1,\varepsilon}(t)+X^{2,\varepsilon}(t))+\breve{Y}(t)+o(\varepsilon)\\
&\quad+\frac{1}{2}(X^{1,\varepsilon}(t),X^{1,\varepsilon}(t-l))
\begin{pmatrix}
P(t)&P_1(t)\\
P_1(t)&0
\end{pmatrix}
\begin{pmatrix}
X^{1,\varepsilon}(t)&\\
X^{1,\varepsilon}(t-l)&
\end{pmatrix},
\end{aligned}
\end{equation}
where $(P(t),P_1(t))_{t\in[0,T+l]}$ is the solution of (\ref{equ 3.16}).\\
It implies that for the pointwise delay case $Nl<T\leq(N+1)l,\ N>1, N\in\mathbb{N}$ we naturally have
the following second-order expansion of the variation process
\begin{equation}\label{equ 4.112}
\begin{aligned}
&Y^\varepsilon(t)=Y^*(t)+p(t)(X^{1,\varepsilon}(t)+X^{2,\varepsilon}(t))+\breve{Y}(t)+o(\varepsilon)\\
&+\frac{1}{2}\begin{pmatrix}
X^{1,\varepsilon}(t)&\\
X^{1,\varepsilon}(t-l)&\\
\cdot&\\
\cdot&\\
\cdot&\\
X^{1,\varepsilon}(t-Nl)
\end{pmatrix}^\intercal
\begin{pmatrix}
P_{00}(t)&\cdot\cdot\cdot&P_{0(N-1)}(t)&P_{0N}(t)\\
P_{10}(t)&\cdot\cdot\cdot&P_{1(N-1)}(t)&P_{1N}(t)\\
\cdot\cdot\cdot&\cdot\cdot\cdot&\cdot\cdot\cdot&\cdot\cdot\cdot\\
P_{(N-1)0}(t)&\cdot\cdot\cdot&P_{(N-1)(N-1)}(t)&P_{(N-1)N}(t)\\
P_{N0}(t)&\cdot\cdot\cdot&P_{N(N-1)}(t)&0\\
\end{pmatrix}
\begin{pmatrix}
X^{1,\varepsilon}(t)&\\
X^{1,\varepsilon}(t-l)&\\
\cdot&\\
\cdot&\\
\cdot&\\
X^{1,\varepsilon}(t-Nl)
\end{pmatrix}.
\end{aligned}
\end{equation}
Here $P_{ij},\ i,j=0,1,\cdot\cdot\cdot, N$ denotes the adjoint process with $P_{ij}=P_{ji}$ and $P_{NN}\equiv0$.
Clearly, here $(P_{00},P_{01})$ is just $(P,P_1)$ in (\ref{equ 4.111}).

\end{remark}

\section{{\protect \large {Stochastic maximum principle}}}

\emph{\underline{Hamiltonian function}} We define
\begin{equation}\label{equ 5.1}
\begin{aligned}
&H(t,x,x',y,z,\nu,\mu,v;p,q,P)\\
&=pb(t,x,x',\nu,v)+q\sigma(t,x,x',\nu,v)
+\frac{1}{2}P\Big(\sigma(t,x,x',\nu,v)-\sigma(t,X^*(t),X^*(t-l), P_{X^*(t)}, u^*(t))\Big)^2\\
&\quad+f(t,x,x',y,z+p(\sigma(t,x,x',\nu,v)-\sigma(t,X^*(t),X^*(t-l), P_{X^*(t)}, u^*(t)),\mu,v),
\end{aligned}
\end{equation}
where $(t,x,x',y,z,\nu,\mu,v,p,q,P)\in[0,T]\times\mathbb{R}^4\times \mathcal{P}_2(\mathbb{R})\times \mathcal{P}_2(\mathbb{R}^2)\times U\times \mathbb{R}^3$.

\begin{theorem}\label{th 5.1}
Let the assumptions (H3.1)-(H3.3) be in force, and $\widehat{f}^*_{\mu_2}(t)>0,\ t\in[0,T], \widehat{P}\otimes P$-a.s., and
let $u^*(t)$ be the optimal control and $(X^*(t),Y^*(t),Z^*(t))$ be the optimal trajectory. Then there exist two pairs of stochastic
processes $(p,q), ((P,Q), (P_1,Q_1))$, which are the solutions of the first- and second-order adjoint equations, separately, satisfying
(\ref{equ 3.9-1}) and (\ref{equ 3.17-1}), such that for given $l,T$ with $l<T\leq2l$,  the following
inequality holds true
\begin{equation}\label{equ 5.2}
\begin{aligned}
&H(t,X^*(t), X^*(t-l),Y^*(t),Z^*(t),P_{X^*(t)}, P_{(X^*(t),Y^*(t))},v,p(t),q(t),P(t))\\
&\geq H(t,X^*(t), X^*(t-l),Y^*(t),Z^*(t),P_{X^*(t)}, P_{(X^*(t),Y^*(t))},u^*(t),p(t),q(t),P(t)),\\
&\qquad\qquad\qquad\qquad\qquad\qquad\qquad\qquad\qquad\qquad\qquad \qquad \qquad \qquad t\in[0,T], \text{a.s.}\ \text{a.e.}
\end{aligned}
\end{equation}
\end{theorem}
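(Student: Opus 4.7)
The plan is to evaluate the second-order expansion of Theorem~\ref{th 4.1} at the initial time $t=0$ and combine it with the optimality of $u^*$ to extract the Hamiltonian inequality (\ref{equ 5.2}). Since the initial conditions of the variational equations (\ref{equ 3.2}) and (\ref{equ 3.3}) force $X^{1,\varepsilon}(0) = X^{2,\varepsilon}(0) = 0$, and $X^{1,\varepsilon}(t-l)\equiv 0$ for $t=0$, Theorem~\ref{th 4.1} specialised at $t=0$ collapses to
\begin{equation*}
Y^\varepsilon(0) - Y^*(0) = \breve{Y}(0) + o(\varepsilon).
\end{equation*}
Since $J(v^\varepsilon) - J(u^*) = Y^\varepsilon(0) - Y^*(0) \geq 0$ for every spike variation, this yields the scalar variational inequality $\breve{Y}(0) \geq o(\varepsilon)$, and the rest of the argument is to pass from this integral-type inequality to the pointwise Hamiltonian inequality.

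Next I would represent $\breve{Y}(0)$ through a non-negative duality weight. Introduce the forward mean-field linear SDE
\begin{equation*}
d\Gamma(t) = \Gamma(t)f_y(t)\,dt + \Gamma(t)f_z(t)\,dW(t) + \widehat{E}[\widehat{\Gamma}(t)\widehat{f}^*_{\mu_2}(t)]\,dt,\qquad \Gamma(0)=1.
\end{equation*}
Applying It\^o's formula to $\Gamma(t)\breve{Y}(t)$ and using the BSDE (\ref{equ 4.1}) for $\breve{Y}$, then taking expectation, the $\Gamma\breve{Z}$ cross bracket cancels $-\Gamma f_z \breve{Z}$, the multiplicative drift $-\Gamma f_y\breve{Y}$ cancels against $\breve{Y}\Gamma f_y$ coming from $\breve{Y}d\Gamma$, and the mean-field coupling cancels through the hat-swap identity
\begin{equation*}
E\Big[\Gamma(t)\widehat{E}[\widehat{f}_{\mu_2}(t)\widehat{\breve{Y}}(t)]\Big] = E\Big[\breve{Y}(t)\widehat{E}[\widehat{\Gamma}(t)\widehat{f}^*_{\mu_2}(t)]\Big],
\end{equation*}
which is a Fubini consequence of the distributional identity between $(\Omega,\mathcal{F},P)$ and $(\widehat{\Omega},\widehat{\mathcal{F}},\widehat{P})$ together with the exchange of roles between $\widehat{f}_{\mu_2}$ and $\widehat{f}^*_{\mu_2}$. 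Using $\breve{Y}(T)=0$ and $\Gamma(0)=1$, this produces the duality formula
\begin{equation*}
\breve{Y}(0) = E\Big[\int_{E_\varepsilon}\Gamma(t)\big(A_1(t) + \Delta f(t)\big)\,dt\Big].
\end{equation*}
Under the hypothesis $\widehat{f}^*_{\mu_2}>0$, the additive drift $\widehat{E}[\widehat{\Gamma}\widehat{f}^*_{\mu_2}]$ is non-negative as long as $\Gamma$ remains non-negative up to time $t$; combining this with the Dol\'eans-Dade positivity of the homogeneous multiplicative part and running a Picard iteration (writing $\Gamma = E(1+\int_0^\cdot E(s)^{-1}\widehat{E}[\widehat{\Gamma}\widehat{f}^*_{\mu_2}]ds)$ with $E$ the standard stochastic exponential of $f_y,f_z$) shows $\Gamma(t) > 0$ a.s.\ for every $t\in[0,T]$.

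Combining the optimality bound with the duality formula gives
\begin{equation*}
E\Big[\int_{E_\varepsilon}\Gamma(t)\big(A_1(t)+\Delta f(t)\big)\,dt\Big] \geq o(\varepsilon).
\end{equation*}
Choosing $E_\varepsilon = [\tau,\tau+\varepsilon]$ for arbitrary $\tau\in[0,T)$, dividing by $\varepsilon$ and invoking the Lebesgue differentiation theorem yields $E[\Gamma(\tau)(A_1(\tau)+\Delta f(\tau))]\geq 0$. A standard measurable-selection/spike-perturbation argument, taking $v$ to be an arbitrary $\mathcal{F}_\tau$-measurable $U$-valued random variable, upgrades this to the pointwise $\Gamma(t)(A_1(t)+\Delta f(t))\geq 0$ a.s.\ and a.e.\ $t$. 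Since $\Gamma>0$ we conclude $A_1(t)+\Delta f(t)\geq 0$; unpacking the definition of $A_1$ in (\ref{equ 4.1-1}) and the Hamiltonian (\ref{equ 5.1}), a direct computation identifies $A_1(t)+\Delta f(t) = H(t,\ldots,v,\ldots)-H(t,\ldots,u^*(t),\ldots)$, which is exactly the desired inequality (\ref{equ 5.2}).

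The main obstacle I foresee is the duality step, namely the construction and strict positivity of the weight $\Gamma$. In the classical (non-mean-field) setting $\Gamma$ would be a pure Dol\'eans-Dade exponential and its positivity would be automatic; here the coupling $\widehat{E}[\widehat{f}_{\mu_2}\widehat{\breve{Y}}]$ forces $\Gamma$ to satisfy a mean-field SDE with an \emph{additive} drift, and the positivity is no longer a pathwise consequence of the structure of the equation but rather a comparison-principle statement. It is precisely the hypothesis $\widehat{f}^*_{\mu_2}>0$ that enables this comparison and hence the reduction of the integrated optimality inequality to the pointwise Hamiltonian inequality.
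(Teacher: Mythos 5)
Your proposal is correct and follows essentially the same route as the paper: reduce optimality to $\breve{Y}(0)\geq o(\varepsilon)$, introduce the same mean-field duality weight $\Gamma$ with $\Gamma(0)=1$, establish $\Gamma>0$ by a comparison argument resting on $\widehat{f}^*_{\mu_2}>0$, and conclude via the duality formula and the Lebesgue differentiation theorem. The only cosmetic difference is that you prove positivity of $\Gamma$ by a variation-of-constants/Picard bootstrap, whereas the paper compares $\Gamma$ with the Dol\'eans-Dade exponential $\Gamma^o$ by applying It\^{o}'s formula to $((\Gamma^o-\Gamma)^+)^2$ and invoking Gronwall; both hinge on the same observation that the mean-field term is a non-negative source.
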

\begin{proof}
From the definition of $J(v(\cdot))$, (\ref{equ 4.0-1}) and  $X^{1,\varepsilon}(0)=X^{2,\varepsilon}(0)=0$, we derive
$$
J(u^\varepsilon(\cdot))-J(u^*(\cdot))=Y^\varepsilon(0)-Y^*(0)=\breve{Y}(0)+o(\varepsilon)\geq0,
$$
where $\breve{Y}$ is the solution of the equation (\ref{equ 4.1}).

In order to complete the proof, let us
introduce the following mean-field SDE
\begin{equation}\label{equ 5.3}
  \left\{
   \begin{aligned}
d\Gamma(t)&=(f_y(t)\Gamma(t)+E[\widehat{f}^*_{\mu_2}(t)\widehat{\Gamma}(t)])dt+f_z(t)\Gamma(t)dW_t,\ t\in[0,T],\\
\Gamma(0)&=1.
\end{aligned}
   \right.
\end{equation}
We argue that
\begin{equation}\label{equ 5.4}
\Gamma(t)>0,\ t\in[0,T],\ P\text{-a.s.}
\end{equation}
Indeed, consider a forward stochastic differential equation:
\begin{equation}\label{equ 5.3}
  \left\{
   \begin{aligned}
d\Gamma^o(t)&=f_y(t)\Gamma^o(t)dt+f_z(t)\Gamma^o(t)dW_t,\ t\in[0,T],\\
\Gamma^0(0)&=1.
\end{aligned}
   \right.
\end{equation}
Set $\Delta\Gamma(t):=\Gamma^o(t)-\Gamma(t)$.  Applying It\^{o}'s formula to $(\Delta\Gamma(t)^+)^2$
and notice  $\widehat{f}^*_{\mu_2}>0$, we have
\begin{equation}\label{equ 070501}
\begin{aligned}
&E((\Delta\Gamma(t))^+)^2\\
=&E\int_0^t2\Delta\Gamma(s)^+f_y(s)\Delta\Gamma(s)+1_{\{\Delta\Gamma(s)>0\}}f_z^2(s)(\Delta\Gamma(s))^2-2\Delta\Gamma(s)^+
\widehat{E}[\widehat{f}^*_{\mu_2}(s)
\widehat{\Gamma}(s)]ds\\
=&E\int_0^t[2f_y(s)+f_z^2(s)](\Delta\Gamma(s)^+)^2ds+2E\int_0^t\Delta\Gamma(s)^+\widehat{E}[\widehat{f}^*_{\mu_2}(s)
(\Delta\widehat{\Gamma}(s)-\widehat{\Gamma}^o(s))]ds\\
\leq &E\int_0^t[2f_y(s)+f_z^2(s)](\Delta\Gamma(s)^+)^2ds+2E\int_0^t\Delta\Gamma(s)^+\widehat{E}[\widehat{f}^*_{\mu_2}(s)
\Delta\widehat{\Gamma}(s)]ds\\
\leq &CE\int_0^t(\Delta\Gamma(s)^+)^2ds.
\end{aligned}
\end{equation}
The Gronwall lemma allows to show $\Gamma(t)\geq\Gamma^o(t)>0,\ t\in[0,T],\ P$-a.s.\\
Now applying It\^{o}'s formula to  $\breve{Y}(t)\Gamma(t)$, one has
\begin{equation}\label{equ 5.4}
\breve{Y}(0)=\widehat{E}[\int_0^T\Gamma(s)(A_1(s)+\Delta f(s))\mathbbm{1}_{E_\varepsilon}(s)ds].
\end{equation}
Combining $\Gamma(t)\geq\Gamma^o(t)>0,\ t\in[0,T],\ P$-a.s.,
and applying the Lebesgue differentiation theorem,
we obtain (\ref{equ 5.2}).
\end{proof}

\begin{remark}\label{re 5.2}
From the above proof, we know that
nothing would change for the pointwise delay case: $Nl<T\leq(N+1)l,\ N\in \mathbb{N},\ N>1$ once obtaining the expansion (\ref{equ 4.112}).
\end{remark}

Hence, as for the pointwise delay case, we give the following result without proof.
\begin{corollary}
Let the assumptions (H3.1)-(H3.3) be in force, and $\widehat{f}^*_{\mu_2}(t)>0,\ t\in[0,T], \widehat{P}\otimes P$-a.s., and
let $u^*(t)$ be the optimal control and $(X^*(t),Y^*(t),Z^*(t))$ be the optimal trajectory. Then there exist two pair of stochastic
processes $(p,q), ((P_{00},Q_{00}),(P_{01},Q_{01}),\cdot\cdot\cdot,(P_{(N-1)N},Q_{(N-1)N}))$
which are the solutions of the first- and second-order adjoint equations,
separately,
 such that for given $l,T$ with $(N-1)l<T\leq Nl,\ N\in \mathbb{N}, N\geq1$,  the following
inequality holds true
\begin{equation}\label{equ 5.7}
\begin{aligned}
&H(t,X^*(t), X^*(t-l),Y^*(t),Z^*(t),P_{X^*(t)}, P_{(X^*(t),Y^*(t))},v,p(t),q(t),P_{00}(t))\\
&\geq H(t,X^*(t), X^*(t-l),Y^*(t),Z^*(t),P_{X^*(t)}, P_{(X^*(t),Y^*(t))},u^*(t),p(t),q(t),P_{00}(t)),\\
&\qquad\qquad\qquad\qquad\qquad\qquad\qquad\qquad\qquad\qquad\qquad \qquad \qquad \qquad t\in[0,T], \text{a.s.}\ \text{a.e.}
\end{aligned}
\end{equation}
\end{corollary}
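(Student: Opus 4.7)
The plan is to combine optimality with the second-order expansion of $Y^\varepsilon$ from Theorem \ref{th 4.1}, and then extract the pointwise Hamiltonian inequality through a mean-field duality argument. First, since $u^*$ minimizes $J(v)=Y^v(0)$ and since the initial values $X^{1,\varepsilon}(0)$, $X^{2,\varepsilon}(0)$, $K^\varepsilon(0)$, $K^\varepsilon_1(0)$ all vanish by construction, evaluating the expansion (\ref{equ 4.0-2}) at $t=0$ collapses every term except $\breve{Y}(0)$, giving
$$0 \leq J(u^\varepsilon)-J(u^*) = Y^\varepsilon(0)-Y^*(0) = \breve{Y}(0)+o(\varepsilon).$$
Hence the task reduces to deriving the Hamiltonian inequality from the sign condition $\breve{Y}(0) \geq -o(\varepsilon)$ on the solution of the linear mean-field BSDE (\ref{equ 4.1}).

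Second, I would introduce the adjoint forward mean-field SDE
\begin{equation*}
d\Gamma(t)=\bigl(f_y(t)\Gamma(t)+\widehat{E}[\widehat{f}^*_{\mu_2}(t)\widehat{\Gamma}(t)]\bigr)dt+f_z(t)\Gamma(t)\,dW_t,\quad \Gamma(0)=1,
\end{equation*}
and apply It\^o's formula to the product $\Gamma(t)\breve{Y}(t)$. The drift contributions split into the desired source term $\Gamma(s)(A_1(s)+\Delta f(s))\mathbbm{1}_{E_\varepsilon}(s)$ plus two mean-field cross-terms, namely $\Gamma\cdot\widehat{E}[\widehat{f}_{\mu_2}\widehat{\breve{Y}}]$ and $\breve{Y}\cdot\widehat{E}[\widehat{f}^*_{\mu_2}\widehat{\Gamma}]$. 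Taking expectations and using Fubini together with the independent-copy identification $\widehat{P}_{(\widehat{X}^*,\widehat{Y}^*)}=P_{(X^*,Y^*)}$ (so that, after relabeling copies, $\widehat{f}_{\mu_2}$ and $\widehat{f}^*_{\mu_2}$ play dual roles), these two cross-terms cancel exactly, yielding the duality identity
$$\breve{Y}(0)=E\Big[\int_0^T\Gamma(s)\bigl(A_1(s)+\Delta f(s)\bigr)\mathbbm{1}_{E_\varepsilon}(s)\,ds\Big].$$

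Third, I need to establish $\Gamma(t)>0$ a.s., and this is the main obstacle, since the mean-field drift $\widehat{E}[\widehat{f}^*_{\mu_2}\widehat{\Gamma}]$ couples across $\omega$ and prevents any direct Dol\'eans-Dade representation. The idea is to compare $\Gamma$ with the auxiliary process $\Gamma^o$ solving the same SDE without the mean-field term; $\Gamma^o$ is strictly positive by its explicit exponential formula. Setting $\Delta\Gamma:=\Gamma^o-\Gamma$ and applying It\^o's formula to $((\Delta\Gamma)^+)^2$, the sign hypothesis $\widehat{f}^*_{\mu_2}>0$ ensures the mean-field cross-term has the correct sign to be absorbed, so that a Gronwall estimate of the form $E[((\Delta\Gamma(t))^+)^2]\leq C\int_0^t E[((\Delta\Gamma(s))^+)^2]\,ds$ forces $\Delta\Gamma\leq 0$, i.e., $\Gamma\geq\Gamma^o>0$.

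Finally, inserting $\Gamma>0$ into the duality identity and the optimality bound gives
$$E\Big[\int_0^T \Gamma(s)\bigl(A_1(s)+\Delta f(s)\bigr)\mathbbm{1}_{E_\varepsilon}(s)\,ds\Big]\geq -o(\varepsilon).$$
Dividing by $\varepsilon=\lambda(E_\varepsilon)$ and specializing to shrinking sets $E_\varepsilon\downarrow\{t\}$, the Lebesgue differentiation theorem (applied along a dense countable family of controls $v$ and then extended by a measurable-selection argument) yields $\Gamma(t)(A_1(t)+\Delta f(t))\geq 0$ $dt\otimes dP$-a.e. Since $\Gamma(t)>0$, this unpacks, via the definitions of $A_1$ and $\Delta f$ in (\ref{equ 4.1-1}), into precisely the Hamiltonian inequality (\ref{equ 5.2}), completing the proof. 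The only technical delicacies beyond the positivity of $\Gamma$ are the verification that the cross mean-field terms cancel in the It\^o product $d(\Gamma\breve{Y})$, which relies crucially on the independent-copy framework, and the extension from fixed deterministic perturbations to arbitrary $v\in\mathcal{U}_{ad}$.
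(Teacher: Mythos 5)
Your proposal is correct and follows essentially the same route as the paper: the paper proves Theorem \ref{th 5.1} by exactly this chain (optimality gives $\breve{Y}(0)+o(\varepsilon)\geq 0$, the auxiliary mean-field SDE $\Gamma$ with positivity established by comparison with $\Gamma^o$ via It\^{o}'s formula applied to $((\Delta\Gamma)^+)^2$ and Gronwall, the duality identity $\breve{Y}(0)=\widehat{E}[\int_0^T\Gamma(s)(A_1(s)+\Delta f(s))\mathbbm{1}_{E_\varepsilon}(s)ds]$, and the Lebesgue differentiation theorem), and then states this corollary without proof on the grounds of Remark \ref{re 5.2} that nothing changes in that argument. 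The only ingredient specific to the $N$-delay corollary that you pass over — replacing the expansion (\ref{equ 4.0-2}) of Theorem \ref{th 4.1}, valid only for $l<T\leq 2l$, by the matrix-valued expansion (\ref{equ 4.112}) with the family $(P_{ij})$ of second-order adjoints — is likewise only asserted, not proved, in the paper (Remarks \ref{re 3.6} and \ref{re 4.2}), so your treatment matches the paper's own level of rigor here.
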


\section{{\protect \large {Application to Finance}}}

Let us consider a continuous-time financial market with a risk-free bond and a risky share being traded. The bond price process denoted by
$\{S_0(t)\}_{t\in[0,T]}$ is described by
\begin{equation}\label{equ 6.1}
  \left\{
   \begin{aligned}
dS_0(t)&=\alpha(t)S_0(t)dt,\ t\in[0,T],\\
S_0(0)&=1,
\end{aligned}
   \right.
\end{equation}
where $\alpha(t)$ is the risk-free interest rate. It is a uniformly bounded, deterministic function.\\
The price process of the risky share $\{S_1(t)\}_{t\in[0,T]}$ evolves following the equation
\begin{equation}\label{equ 6.2}
  \left\{
   \begin{aligned}
dS_1(t)&=b(t)S_1(t)dt+\sigma(t)S_1(t)dW(t),\ t\in[0,T],\\
S_1(0)&=s_1>0,
\end{aligned}
   \right.
\end{equation}
where $b:[0,T]\rightarrow\mathbb{R},\ \sigma:[0,T]\rightarrow(0,+\infty)$ are uniformly bounded deterministic function
of $t$, which denote the appreciation rate and the volatility, respectively.

By $v(t)$ we denote the amount of an investor's wealth invested in the share at time $t$. Then the value $U(t)$ of the
investor of a self-financing portfolio consisting of the risk-free and the risky assets is
 \begin{equation}\label{equ 6.3}
 \begin{aligned}
dU(t)=(\alpha(t)U(t)+v(t)(b(t)-\alpha(t)))dt+v(t)\sigma(t)dW(t).
\end{aligned}
\end{equation}
But in practice there must be some unavoidable delays arising in various situations. Hence,
Shen, Meng and Shi \cite{SMS} considered the following wealth process with delays and jumps
 \begin{equation}\label{equ 6.4}
 \left\{
 \begin{aligned}
dU(t)&=(\alpha(t)U(t)+\beta(t)U(t-l)+v(t)(b(t)-\alpha(t)))dt+v(t)\sigma(t)dW(t)\\
      &\quad+\int_{\mathbb{R}_0}v(t)r(t,z)\widetilde{N}(dz,dt),\ t\in[0,T],\\
  U(t)&=x>0,\ t\in[-l,0].
\end{aligned}
\right.
\end{equation}

In our case, we want to consider the corresponding mean-field game without the jump term. More precisely,
suppose $T< l\leq 2T$ and
let $g^i: \mathbb{R}\rightarrow\mathbb{R}$ be a twice continuously differentiable functions. Consider $N$
individual investors and suppose that the $i$-th investor's wealth process is of the form
 \begin{equation}\label{equ 6.5}
 \left\{
 \begin{aligned}
dU^{i,v^i}(t)&=(\alpha^i(t)U^{i,v^i}(t)+\beta^{i}(t)U^{i,v^i}(t-l)+g^i(\overline{\nu}^N(t))
+v^i(t)(b(t)-\alpha(t))dt\\
              &\quad+v^i(t)\sigma(t)dW^i(t),\  t\in[0,T],\\
  U^{i,v^i}(0)&=x^i>0,\ t\in[-l,0],
\end{aligned}
\right.
\end{equation}
where \ $\overline{\nu}^N(s)=\frac{1}{N}\sum_{i=1}^N\delta_{U^{i,v^i}(s)}$ and $\delta_x$ is the Dirac measure at $x$.

Assume that the $i$-th investor wants to minimize the utility $V^{i,v^i}(t)$ resulting from $v^i$.
Without loss of generality, we define
 \begin{equation}\label{equ 6.6}
V^{i,v^i}(t)=E^{\mathcal{F}_t}[U^{i,v^i}(T)+\int_t^Tf^i(s,V^{i,v^i}(s),\overline{\overline{\nu}}^N(s),v^i(s))ds].
\end{equation}
Here  $f^i$ satisfies the assumptions (H3.1)-(H3.3) and
$\overline{\overline{\nu}}^N(s)=\frac{1}{N}
\sum_{i=1}^N\delta_{(U^{i,v^i}(s),V^{i,v^i}(s))}.$\\
Following El Karoui, Peng and Quenez \cite{KPQ}  the recursive utility $V^{i,v^i}(t)$ satisfies the equation
\begin{equation}\label{equ 6.7}
 \left\{
 \begin{aligned}
 -dV^{i,v^i}(t)&=f^i(t,V^{i,v^i}(t),\overline{\overline{\nu}}^N(t),v^i(t))dt-R^{i,v^i}(t)dW^i(t),\ t\in[0,T],\\
      V^{i,v^i}(T)&= U^{i,v^i}(T).
\end{aligned}
\right.
\end{equation}
Consequently, the control problem can be formulated as
\begin{equation}\label{equ 6.8}
 \left\{
 \begin{aligned}
 dU^{i,v^i}(t)&=(\alpha^i(t)U^{i,v^i}(t)+\beta^{i}(t)U^{i,v^i}(t-l)+g^i(\overline{\nu}^N(t))
+v^i(t)(b(t)-\alpha(t))dt\\
              &\quad+v^i(t)\sigma(t)dW^i(t),\  t\in[0,T],\\
  U^{i,v^i}(0)&=x^i>0,\ t\in[-l,0],\\
  -dV^{i,v^i}(t)&=f^i(t,V^{i,v^i}(t),\overline{\overline{\nu}}^N(t),v^i(t))dt-R^{i,v^i}(t)dW^i(t),\ t\in[0,T],\\
      V^{i,v^i}(T)&= U^{i,v^i}(T),
\end{aligned}
\right.
\end{equation}
and the target is to minimum $J^i(v^i(\cdot))=V^{i,v^i}(0),\ 1\leq i\leq N.$

Suppose that the game is symmetric, in other words, $\alpha^i(t)=\alpha(t), \beta^i(t)=\beta(t), x^i=x,f^i=f, g^i=g$
are all independent of $i$, and let $N$ converge to $+\infty$, then following the scheme of Lasry and Lions we can find
approximate Nash equilibriums by a limiting dynamics and assign representative investor the unified strategy $v$, which is
determined by the following mean-field FBSDE
\begin{equation}\label{equ 6.9}
 \left\{
 \begin{aligned}
 dX^v(t)&=(\alpha(t)X^v(t)+\beta (t)X^v(t-l)+g(P_{X^v(t)})
+v(t)(b(t)-\alpha(t))dt\\
              &\quad+v(t)\sigma(t)dW(t),\  t\in[0,T],\\
  X^v(0)&=x >0,\ t\in[-l,0],\\
  -dY^v(t)&=f(t,Y^v(t),P_{(X^v(t),Y^v(t))},v(t))dt-Z^v(t)dW(t),\ t\in[0,T],\\
      Y^v (T)&= X^v(T).
\end{aligned}
\right.
\end{equation}
Accordingly, the cost functional changes to
\begin{equation}\label{equ 6.10}
J(v(\cdot))=Y^v(0).
\end{equation}

Let $u^*(t)$ be the optimal control, $(X^*, Y^*, Z^*)$ be the optimal trajectory. We define \emph{Hamiltonian function}
\begin{equation}\label{equ 6.11}
\begin{aligned}
H(t,x,x',y,\nu,\mu,v;p,q,P)&=p(\alpha(t)x+\beta(t)x'+\nu+v(b(t)-\alpha(t)))\\
&\quad+qv\sigma(t)+\frac{1}{2}P(v-u^*(t))^2(\sigma(t))^2+f(t,y,\mu,v).
\end{aligned}
\end{equation}
Then from Theorem \ref{th 5.1} we know that $u^*$ satisfies
\begin{equation}\label{equ 6.12}
\begin{aligned}
&H(t,X^*(t),X^*(t-l),Y^*(t),P_{X^*(t)},P_{(X^*(t),Y^*(t))},v;p(t),q(t),P(t))\\
&\geq   H(t,X^*(t),X^*(t-l),Y^*(t),P_{X^*(t)},P_{(X^*(t),Y^*(t))},u^*(t);p(t),q(t),P(t))
\end{aligned}
\end{equation}
i.e.,
\begin{equation}\label{equ 6.13}
\begin{aligned}
&\Big(p(t)(b(t)-\alpha(t))-q(t)\sigma(t)\Big)(v-u^*(t))+\frac{1}{2}P(t)(\sigma(t))^2(v-u^*(t))^2\\
&+f(t,Y^*(t),P_{(X^*(t),Y^*(t))},v)-f(t,Y^*(t),P_{(X^*(t),Y^*(t))},u^*(t))\geq0,
\end{aligned}
\end{equation}
where $(p,q), ((P,Q),(P_1,Q_1))$ are the solutions of the first- and second-order adjoint equations, respectively,
\begin{equation}\label{equ 6.14}
 \left\{
 \begin{aligned}
-dp(t)&=\Big(p(t)(f_y(t)+\widehat{E}[\widehat{f}_{\mu_2}^*(t)]+\alpha(t) )+\widehat{E}[\widehat{p}(t)\widehat{g}^*_\nu(t)]
+E^{\mathcal{F}_t}[p(t+l)\beta(t+l)]+\widehat{E}[\hat{f}^*_{\mu_1}(t)]\Big)dt\\
&\quad-q(t)dW(t),\ t\in[0,T],\\
p(T)&=1,\ p(t)=0,\ t\in(T,T+l],\ q(t)=0,\ t\in[T,T+l].
\end{aligned}
\right.
\end{equation}

\begin{equation}\label{equ 6.15}
 \left\{
 \begin{aligned}
-dP(t)&=\Big(P(t)(f_y(t)+\widehat{E}[\widehat{f}^*_{\mu_2}(t)]+2\alpha(t))+2E^{\mathcal{F}_t}[P_1(t+l)\beta(t+l)]
+\widehat{E}[\widehat{p}(t)\widehat{g}^*_{\nu a}(t)]\\
&\quad +(p(t))^2f_{yy}(t)+\widehat{E}[\widehat{f}^*_{\mu_1 a_1}(t)]+(p(t))^2\widehat{E}[\widehat{f}^*_{\mu_2 a_2}(t)]\Big)dt-Q(t)dW(t),\ t\in[0,T),\\
P(t)&=0,\ Q(t)=0,\ t\in[T,T+l],\\
-dP_1(t)&=\Big(P_1(t)(\alpha(t)+\alpha(t-l)+f_y(t)+\widehat{E}[\widehat{f}^*_{\mu_2}(t)])+P(t)\beta(t)\Big)dt-Q_1(t)dW(t),\ t\in[0,T),\\
P_1(t)&=0,\ Q_1(t)=0,\ t\in[T,T+l],\\
\end{aligned}
\right.
\end{equation}
where $$
 \begin{aligned}
(\widehat{g}^*_\nu, \widehat{g}^*_{\nu a})(t)&=(\frac{\partial g}{\partial \nu},\frac{\partial^2 g}{\partial \nu  \partial a})(P_{X^*(t)};\widehat{X}^*(t)),\\
(\widehat{f}^*_{\mu_i},\widehat{f}^*_{\mu_i a_i})(t)
&=((\frac{\partial f}{\partial \mu})_i,\frac{\partial}{\partial{a_i}}((\frac{\partial f}{\partial \mu})_i))
(t,\widehat{Y}^*(t),P_{(X^*(t),Y^*(t))},\widehat{u}^*(t);X^*(t),Y^*(t)),\ i=1,2.
\end{aligned}$$

\begin{remark}\label{re 6.1}
In (\ref{equ 6.14}), (\ref{equ 6.15}), we don't  give the specific expression of the derivatives of the coefficients $g, f$ with respect to the
measure. But we argue that it is possible to show the accurate derivatives for some special $f, g$ with respect to the measure. For example,
let $h_0, h_1:\mathbb{R}\rightarrow\mathbb{R},\ h_2:\mathbb{R}^2\rightarrow\mathbb{R}$ be twice continuously differentiable functions with bounded
derivatives of all order, and $f_1:[0,T]\times \mathbb{R}\times U\rightarrow\mathbb{R}$ satisfy assumptions (H3.1)-(H3.3).
For $\xi,\eta\in L^2(\Omega,\mathcal{F},P)$, define
$$
 \begin{aligned}
g(P_\xi)&:=h_0(E[h_1(\xi)]), \quad
f(t,y,P_{(\xi,\eta)},v):=f_1(t,y,v)+h_0(E[h_2(\xi,\eta)]).
\end{aligned}
$$
Then
$$
\begin{aligned}
&\frac{\partial g}{\partial\nu}(P_\xi;a)=h_0'(E[h_1(\xi)])h_1'(a),\quad
\frac{\partial^2g}{\partial \nu \partial a}(P_\xi;a)=h_0'(E[h_1(\xi)])h_1''(a).\\
\end{aligned}
$$

\noindent For fixed $\eta\in L^2(\Omega,\mathcal{F},P)$,
$$
\begin{aligned}
&(\frac{\partial f}{\partial\mu})_1(t,y,P_{(\xi,\eta)},v;a_1,\eta)
=h'_0(E[h_2(\xi,\eta)])\frac{ \partial h_2}{\partial a_1}(a_1,\eta),\\
&\frac{\partial}{\partial a_1}((\frac{\partial f}{\partial\mu})_1)(t,y,P_{(\xi,\eta)},v;a_1,\eta)=h'_0(E[h_2(\xi,\eta)])
\frac{ \partial^2 h_2}{\partial a_1^2}(a_1,\eta).\\
\end{aligned}
$$
$(\frac{\partial f}{\partial\mu})_2$ and $\frac{\partial}{\partial a_2}((\frac{\partial f}{\partial\mu})_2)$
can be understood in the same meaning.

\end{remark}

\section{{\protect \large {Appendix}}}

\subsection{{\protect \large {Notations}}}
$$
\begin{aligned}
M(t)&=p(t)(X^{1,\varepsilon}(t)+X^{2,\varepsilon}(t))
+\frac{1}{2}P(t)K^\varepsilon(t)
 +P_1(t)K^\varepsilon_1(t),\\
 A_1(t)&= p(t)\Delta b(t)+q(t)\Delta\sigma(t)+\frac{1}{2}P(t)(\Delta \sigma(t))^2,\\
A_2(t)&=(X^{1,\varepsilon}(t)+X^{2,\varepsilon}(t))\Big(p(t)b_x(t)+q(t)\sigma_x(t)-F(t)\Big)
+(X^{1,\varepsilon}(t-l)+X^{2,\varepsilon}(t-l))\cdot\\
&\quad\ \Big(p(t)b_{x'}(t)+q(t)\sigma_{x'}(t)\Big)
+\widehat{E}\Big[(\widehat{X}^{1,\varepsilon}(t)+\widehat{X}^{2,\varepsilon}(t))\Big(p(t)\widehat{b}_{\nu}(t)+q(t)\widehat{\sigma}_{\nu}(t)\Big)\Big],\\
A_3(t)&=\frac{1}{2}(X^{1,\varepsilon}(t))^2\Big(
b_{xx}(t)p(t)+\sigma_{xx}(t)q(t)+2b_x(t)P(t)+(\sigma_x(t))^2P(t)+2\sigma_x(t)Q(t)-G(t)\Big)\\
&\quad+\frac{1}{2}(X^{1,\varepsilon}(t-l))^2\Big(
b_{x'x'}(t)p(t)+\sigma_{x'x'}(t)q(t)+(\sigma_{x'}(t))^2P(t)+2b_{x'}(t)P_1(t)\\
&\quad+2\sigma_{x'}(t)\sigma_{x}(t-l)P_1(t)+2\sigma_{x'}(t)Q_1(t)\Big)
+\frac{1}{2}\widehat{E}[(X^{1,\varepsilon}(t))^2(p(t)\widehat{b}_{\nu a}(t)+q(t)\widehat{\sigma}_{\nu a}(t))],\\
\overline{A}_3(t)&=(X^{1,\varepsilon}(t)X^{1,\varepsilon}(t-l))
\Big\{b_{xx'}(t)p(t)+\sigma_{xx'}(t)q(t)+b_{x'}(t)P(t)+\sigma_x(t)\sigma_{x'}(t)P(t)+\sigma_{x'}(t)Q(t) \\
&\quad+b_x(t)P_1(t)+b_x(t-l)P_1(t)+\sigma_x(t)\sigma_x(t-l)P_1(t)+\sigma_x(t)Q_1(t)+\sigma_x(t-l)Q_1(t)-G_1(t)\Big\},\\
A_4(t)&=\mathbbm{1}_{E_\varepsilon}(t)X^{1,\varepsilon}(t)
\Big\{p(t)\Delta b_x(t)+q(t)\Delta \sigma_x(t)\Big\}
+\mathbbm{1}_{E_\varepsilon}(t)X^{1,\varepsilon}(t-l)\Big\{p(t)\Delta b_{x'}(t-l)+q(t)\Delta \sigma_{x'}(t-l)\Big\}\\
&\quad+\widehat{E}[\widehat{X}^{1,\varepsilon}(t)(p(t)\Delta \widehat{b}_\nu(t)+q(t)\Delta \widehat{\sigma}_\nu(t))]
+\frac{1}{2}P(t)L_1(t)+\frac{1}{2}P_1(t)L_3(t),\\
\end{aligned}
$$
$$
\begin{aligned}
B_2(t)&=(X^{1,\varepsilon}(t)+X^{2,\varepsilon}(t))\Big(p(t)\sigma_x(t)+q(t)\Big)
+(X^{1,\varepsilon}(t-l)+X^{2,\varepsilon}(t-l))p(t)\sigma_{x'}(t)\\
&\quad\  \widehat{E}[(\widehat{X}^{1,\varepsilon}(t)+\widehat{X}^{2,\varepsilon}(t))p(t)\widehat{\sigma}_{\nu}(t)],\\
B_3(t)&=\frac{1}{2}(X^{1,\varepsilon}(t))^2\Big(p(t)\sigma_{xx}(t)+2P(t)\sigma_x(t)+Q(t)\Big)
+\frac{1}{2}(X^{1,\varepsilon}(t-l))^2\Big(p(t)\sigma_{x'x'}(t)+2P_1(t)\sigma_{x'}(t)\Big)\\
&\quad+(X^{1,\varepsilon}(t)X^{1,\varepsilon}(t-l))
\Big(p(t)\sigma_{xx'}(t)+P(t)\sigma_{x'}(t)
+P_1(t)\sigma_x(t)+P_1(t)\sigma_x(t-l)+Q_1(t)\Big)\\
&\quad+\frac{1}{2}\widehat{E}[p(t)\widehat{\sigma}_{\nu a}(t)(\widehat{X}^{1,\varepsilon})^2],\\
\overline{B}_3(t)&=(X^{1,\varepsilon}(t)X^{1,\varepsilon}(t-l))
\Big(p(t)\sigma_{x'}(t)+P_1(t)\sigma_x(t)+P_1(t)\sigma_x(t-l)+Q_1(t)\Big),\\
B_4(t)&=\mathbbm{1}_{E_\varepsilon}(t)X^{1,\varepsilon}(t)p(t)\Delta \sigma_x(t)
+\mathbbm{1}_{E_\varepsilon}(t)X^{1,\varepsilon}(t-l)p(t)\Delta \sigma_{x'}(t)
+\mathbbm{1}_{E_\varepsilon}(t)\widehat{E}[\Delta \widehat{\sigma}_\nu(t)\widehat{X}^{1,\varepsilon}]\\
&\quad+\frac{1}{2}P(t)L_2(t)+\frac{1}{2}P_1(t)L_4(t).
\end{aligned}
$$
Here $F(t), G(t), G_1(t)$ are given in (\ref{equ 3.9}),  (\ref{equ 3.17});
$L_i(t), i=1,2,3,4$ are introduced in (\ref{equ 3.11}) and (\ref{equ 3.14}).

\subsection{{\protect \large {The proof of (\ref{equ 3.12})}}}
In order to prove (\ref{equ 3.12}), we just need to prove the following three estimates since the
similar argument can be applied to calculate the rest terms:
\begin{equation}\label{equ 7.1}
\begin{aligned}
 &\mathrm{a)}\  E[(\int_0^T|X^{1,\varepsilon}(t)2\Delta b(t)\mathbbm{1}_{E_\varepsilon}(t)|dt)^2]\leq \varepsilon^2\rho(\varepsilon);\\
&\mathrm{b)}\  E[(\int_0^T|2\sigma_{x'}(t)X^{1,\varepsilon}(t-l)\widehat{E}[\widehat{\sigma}_\nu^*(t)\widehat{X}^{1,\varepsilon}(t)]|dt)^2]\leq \varepsilon^2\rho(\varepsilon);\\
&\mathrm{c)}\  E[(\int_0^T|2\Delta\sigma(t)\widehat{E}[\widehat{\sigma}_\nu^*(t)\widehat{X}^{1,\varepsilon}(t)]\mathbbm{1}_{E_\varepsilon}(t)|dt)^2]\leq \varepsilon^2\rho(\varepsilon).
\end{aligned}
\end{equation}

 For  $\mathrm{a)}$, due to the coefficient $b$ being continuous with respect to $v$,  (\ref{equ 3.4}) and H\"{o}lder inequality
 allow to show
\begin{equation}\label{equ 7.2}
\begin{aligned}
&E[(\int_0^T|X^{1,\varepsilon}(t)2\Delta b(t)\mathbbm{1}_{E_\varepsilon}(t)|dt)^2]\\
&\leq C\varepsilon^2\Big\{E[\sup_{t\in[0,T]}|X^{1,\varepsilon}(t)|^4]\Big\}^\frac{1}{2}
\Big\{E[\sup_{t\in[0,T]}|v(t)-u^*(t)|^4]\Big\}^\frac{1}{2}\leq C\varepsilon^3.
\end{aligned}
\end{equation}
As for $\mathrm{b)}$, from the boundness of $\sigma_{x'}$, (\ref{equ 3.3-2})  and (\ref{equ 3.4}), one can
check
\begin{equation}\label{equ 7.4}
\begin{aligned}
&E[(\int_0^T|2\sigma_{x'}(t)X^{1,\varepsilon}(t-l)\widehat{E}[\widehat{\sigma}_\nu^*(t)\widehat{X}^{1,\varepsilon}(t)]|dt)^2]\\
&\leq CT^{\frac{3}{2}}\Big\{E\Big[\sup_{t\in[0,T]}|X^{1,\varepsilon}(t-l)|^4\Big]\Big\}^\frac{1}{2}
\Big\{E\Big[ \int_0^T |\widehat{E}[\widehat{\sigma}_\nu^*(t)\widehat{X}^{1,\varepsilon}(t)]|^4dt\Big]\Big\}^\frac{1}{2}
\leq \varepsilon^2\rho(\varepsilon).
\end{aligned}
\end{equation}
We now analyse $\mathrm{c)}$. Notice that  $\sigma$ is continuous in $v$, and $\sigma_\nu$
is bounded, then (\ref{equ 3.3-2}) implies
\begin{equation}\label{equ 7.5}
\begin{aligned}
&E[(\int_0^T|2\Delta\sigma(t)\widehat{E}[\widehat{\sigma}_\nu^*(t)\widehat{X}^{1,\varepsilon}(t)]\mathbbm{1}_{E_\varepsilon}(t)|dt)^2]\\
&\leq C \varepsilon^2 E\Big[\sup_{t\in[0,T]}|X^{1,\varepsilon}(t)|^2\Big]
E\Big[\sup_{t\in[0,T]}|v(t)-u^*(t)|^2\Big]\leq C\varepsilon^3.
\end{aligned}
\end{equation}

\subsection{{\protect \large {The proof of (\ref{equ 4.8-4})}}}

\emph{Proof of (\ref{equ 4.8-4}).}  Notice that
\begin{equation}\label{equ 7.2-1}
\begin{aligned}
&E[(\int_0^T|I_{2,1}(s)-I_{2,2}(s)|\mathbbm{1}_{E_\varepsilon}(s)ds)^2]\\
&=E\Big[\Big(\int_0^T\mathbbm{1}_{E_\varepsilon}(s)\Big(
|X^{1,\varepsilon}(s)+X^{2,\varepsilon}(s)|+|X^{1,\varepsilon}(s-l)+X^{2,\varepsilon}(s-l)|
+|\breve{Y}(s)+M(s)|+|\breve{Z}(s)\\
&\quad+B_2(s)+B_3(s)+\overline{B}_3(s)|+\{E|\breve{Y}(s)+M(s)|^2\}^{\frac{1}{2}}
+\{E|X^{1,\varepsilon}(s)+X^{2,\varepsilon}(s)|^2\}^\frac{1}{2}\Big)ds\Big)^2\Big].
\end{aligned}
\end{equation}
We mainly prove
\begin{equation}\label{equ 7.2-2}
\begin{aligned}
E\Big[\Big(\int_0^T\mathbbm{1}_{E_\varepsilon}(s)
|\breve{Z}(s)+B_2(s)+B_3(s)+\overline{B}_3(s)|ds\Big)^2\Big]\leq\varepsilon^2\rho(\varepsilon),
\end{aligned}
\end{equation}
since the other terms can be argued similarly.\\
Observe that
\begin{equation}\label{equ 7.2-3}
\begin{aligned}
&E\Big[\Big(\int_0^T\mathbbm{1}_{E_\varepsilon}(s)|\breve{Z}(s)+B_2(s)+B_3(s)+\overline{B}_3(s)|ds\Big)^2\Big]\\
&\leq C E[(\int_0^T\mathbbm{1}_{E_\varepsilon}(s)|\breve{Z}(s)|ds)^2]
+CE[(\int_0^T\mathbbm{1}_{E_\varepsilon}(s)|B_2(s)|ds)^2]\\
&\quad+CE[(\int_0^T\mathbbm{1}_{E_\varepsilon}(s)|B_3(s)|ds)^2]
+CE[(\int_0^T\mathbbm{1}_{E_\varepsilon}(s)|\overline{B}_3(s)|ds)^2]\Big\}\\
&=: \triangle_1(\varepsilon)+\triangle_2(\varepsilon)+\triangle_3(\varepsilon)+\triangle_4(\varepsilon) .
\end{aligned}
\end{equation}
From the definitions of $B_3$ and $\overline{B}_3$, it is not difficult to check
\begin{equation}\label{equ 7.2-4}
\bigtriangleup_1(\varepsilon)+\bigtriangleup_3(\varepsilon)+\bigtriangleup_4(\varepsilon)
\leq \varepsilon^2\rho(\varepsilon).
\end{equation}

As for $\triangle_2(\varepsilon)$, it can be written as
\begin{equation}\label{equ 7.2-5}
\begin{aligned}
\bigtriangleup_2(\varepsilon)
&\leq C E[(\int_0^T\mathbbm{1}_{E_\varepsilon}(s)|(X^{1,\varepsilon}(s)+X^{2,\varepsilon}(s))(p(s)\sigma_x(s)+q(s))|ds)^2] \\
& +CE[(\int_0^T\mathbbm{1}_{E_\varepsilon}(s)|(X^{1,\varepsilon}(s-l)+X^{2,\varepsilon}(s-l))p(s)\sigma_{x'}(s)|ds)^2] \\
&+ CE[(\int_0^T\mathbbm{1}_{E_\varepsilon}(s)|\widehat{E}[(\widehat{X}^{1,\varepsilon}(s)
+\widehat{X}^{2,\varepsilon}(s))p(s)\widehat{\sigma}_{\nu}(s)]|ds)^2]\\
&=:\triangle_{2,1}(\varepsilon)+\triangle_{2,2}(\varepsilon)+\triangle_{2,3}(\varepsilon).
\end{aligned}
\end{equation}
However, from the boundness of $\sigma_x$, (\ref{equ 3.4}) and (\ref{equ 3.9-1}), it follows
\begin{equation}\label{equ 7.2-5}
\begin{aligned}
\triangle_{2,1}(\varepsilon)
&\leq CE\Big[(\sup_{t\in[0,T]}|X^{1,\varepsilon}(s)|^2+\sup_{t\in[0,T]}|X^{2,\varepsilon}(s)|^2)
\int_0^T|p(s)\sigma_x(s)+q(s)|^2\mathbbm{1}_{E_\varepsilon}(s)ds\cdot\varepsilon\Big]\\
&\leq C\varepsilon
\Big(\{E[\sup_{t\in[0,T]}|X^{1,\varepsilon}(s)|^4]\}^\frac{1}{2}+\{E[\sup_{t\in[0,T]}|X^{2,\varepsilon}(s)|^4]\}^\frac{1}{2}\Big)\cdot\\
&\qquad\qquad\qquad\qquad\qquad\qquad\qquad\qquad\quad\{E[(\int_0^T|p(s)\sigma_x(s)+q(s)|^2\mathbbm{1}_{E_\varepsilon}(s)ds)^2]\}^\frac{1}{2}\\
&\leq C\varepsilon(\varepsilon+\varepsilon^2)\Big(\varepsilon+(E[(\int_0^T|q(s)|^2\mathbbm{1}_{E_\varepsilon}(s)ds)^2])^\frac{1}{2}\Big).
\end{aligned}
\end{equation}
The Dominated Convergence Theorem implies $(E[(\int_0^T|q(s)|^2\mathbbm{1}_{E_\varepsilon}(s)ds)^2])^\frac{1}{2}\rightarrow0$ as $\varepsilon\downarrow0$. Consequently, $\rho_1(\varepsilon):=C(1+\varepsilon)\Big(\varepsilon+(E[(\int_0^T|q(s)|^2\mathbbm{1}_{E_\varepsilon}(s)ds)^2])^\frac{1}{2}\Big)\rightarrow0$
as $\varepsilon\downarrow0$.
$\triangle_{2,2}(\varepsilon)$ can be estimated with the similar argument.\\
We now turn to analyse $\triangle_{2,3}(\varepsilon)$. Notice
\begin{equation}\label{equ 7.2-6}
\begin{aligned}
\triangle_{2,3}(\varepsilon)& \leq CE[(\int_0^T\mathbbm{1}_{E_\varepsilon}(s)|p(s)||\widehat{E}[\widehat{X}^{1,\varepsilon}(s)\widehat{\sigma}_{\nu}(s)]|ds)^2]
+CE[(\int_0^T\mathbbm{1}_{E_\varepsilon}(s)|p(s)||\widehat{E}[\widehat{X}^{2,\varepsilon}(s)\widehat{\sigma}_{\nu}(s)]|ds)^2]\\
&:=\triangle_{2,3,1}(\varepsilon)+\triangle_{2,3,2}(\varepsilon).
\end{aligned}
\end{equation}
Thanks to  (\ref{equ 3.3-2}), we have
\begin{equation}\label{equ 7.2-7}
\begin{aligned}
\triangle_{2,3,1}(\varepsilon)&\leq CE\Big[\sup_{s\in[0,T]}|p(s)|^2\cdot\varepsilon\cdot \int_0^T|\widehat{E}[\widehat{X}^{1,\varepsilon}(s)\widehat{\sigma}_{\nu}(s)|^2ds\Big]\\
&\leq CT^\frac{1}{2}\varepsilon\Big\{E[\int_0^T|\widehat{E}[\widehat{X}^{1,\varepsilon}(s)\widehat{\sigma}_{\nu}(s)|^4ds]\Big\}^\frac{1}{2}
\leq \varepsilon^2\rho_2(\varepsilon),
\end{aligned}
\end{equation}
where $\rho_2(\varepsilon)\rightarrow0$ as  $\varepsilon\rightarrow0$.\\
Besides, obviously,
\ $\triangle_{2,3,2}(\varepsilon)\leq  C\varepsilon^4.$
Hence, we obtain
$\bigtriangleup_2(\varepsilon)\leq \varepsilon^2\rho(\varepsilon).$

\subsection{{\protect \large {The expansion of $I_{2,2}(s)$}}}

The first-order Taylor expansion of $I_{2,2}(s)$ allows  to show
\begin{equation}\label{equ 7.3}
\begin{aligned}
I_{2,2}&=f_x(s)(X^{1,\varepsilon}(s)+X^{2,\varepsilon}(s))+f_{x'}(s)(X^{1,\varepsilon}(s-l)+X^{2,\varepsilon}(s-l))+f_y(s)(\breve{Y}(s)+M(s))\\
&\quad+f_z(s)(\breve{Z}(s)+B_2(s)+B_3(s)+\overline{B}_3(s))
+\widehat{E}[\widehat{f}_{\mu_1}(s)(\widehat{X}^{1,\varepsilon}(s)+\widehat{X}^{2,\varepsilon}(s))]\\
&\quad+\widehat{E}[\widehat{f}_{\mu_2}(s)(\widehat{\breve{Y}}(s)+\widehat{M}(s))]+R(s),
\end{aligned}
\end{equation}
where
$$
\begin{aligned}
R(s)&=\int_0^1d\rho \Big\{(f^\rho_x(s)-f_x(s))(X^{1,\varepsilon}(s)+X^{2,\varepsilon}(s))
+(f^\rho_{x'}(s)-f_{x'}(s))(X^{1,\varepsilon}(s-l)+X^{2,\varepsilon}(s-l))\\
&\quad+(f^\rho_{y}(s)-f_{y}(s))(\breve{Y}(s)+M(s))
+(f^\rho_{z}(s)-f_{z}(s))(\breve{Z}(s)+B_2(s)+B_3(s)+\overline{B}_3(s))\\
&\quad+ \widehat{E}[(\widehat{f}^\rho_{\mu_1}(s)-\widehat{f}_{\mu_1}(s))(\widehat{X}^{1,\varepsilon}(s)+\widehat{X}^{2,\varepsilon}(s))]
+ \widehat{E}[(\widehat{f}^\rho_{\mu_2}(s)-\widehat{f}_{\mu_2}(s))(\widehat{\breve{Y}}(s)+M(s))]
\Big\},
\end{aligned}
$$
and we define, for $0\leq \lambda \leq 1$,
$$
\begin{aligned}
f^\lambda_{k}(s)&=f(s,X^*(s)+\lambda(X^{1,\varepsilon}(s)+X^{2,\varepsilon}(s)),
       X^*(s-l)+\lambda(X^{1,\varepsilon}(s-l)+X^{2,\varepsilon}(s-l)),\\
       &\quad \ Y^*(s)+\lambda(\breve{Y}(s)+M(s)),Z^*(s)+\lambda(\breve{Z}(s)+B_2(s)+B_3(s)+\overline{B}_3(s)),\\
        &\quad \ P_{(X^*(s)+\lambda(X^{1,\varepsilon}(s)+X^{2,\varepsilon}(s)),Y^*(s)+\lambda(\breve{Y}(s)+M(s)))},u^*(s)).
\end{aligned}
$$
Recall the definitions of $M(s), B_2(s),B_3(s),\overline{B}_3(s)$, $I_{2,2}$ changes to
\begin{equation}\label{equ 7.4}
\begin{aligned}
I_{2,2}(s)&=f_y(s)\breve{Y}(s)+f_z(s)\breve{Z}(s)+\widehat{E}[\widehat{f}_{\mu_2}(s)\widehat{\breve{Y}}(s)]
+(X^{1,\varepsilon}(s)+X^{2,\varepsilon}(s))\Big\{f_x(s)+f_y(s)p(s)\\
&\quad+f_z(s)(p(s)\sigma_x(s)+q(s))\Big\}
+(X^{1,\varepsilon}(s-l)+X^{2,\varepsilon}(s-l))\Big\{f_{x'}(s)+f_z(s)p(s)\sigma_{x'}(s)\Big\}\\
&\quad+\widehat{E}\Big[(\widehat{X}^{1,\varepsilon}(s)+\widehat{X}^{2,\varepsilon}(s))\Big(\widehat{\sigma}_\nu(s)f_z(s)p(s)
+\widehat{f}_{\mu_1}(s)+\widehat{f}_{\mu_2}(s)\widehat{p}(s)\Big)\Big]\\
&\quad+\frac{1}{2}(X^{1,\varepsilon}(s))^2
\Big\{f_y(s)P(s)+f_z(s)(p(s)\sigma_{xx}(s)+2P(s)\sigma_x(s)+Q(s))\Big\}\\
\end{aligned}
\end{equation}
$$
\begin{aligned}
&\quad+\frac{1}{2}f_z(s)p(s)E\Big[(\widehat{X}^{1,\varepsilon})^2\Big(\widehat{\sigma}_{\nu y}(s)+\widehat{f}_{\mu_2}(s)\widehat{P}(s)\Big)\Big]
+\frac{1}{2}(X^{1,\varepsilon}(s-l))^2\Big\{f_z(s)(p(s)\sigma_{x'x'}(s)\\
&\quad+2P_1(s)\sigma_{x'}(s))\Big\}
 +X^{1,\varepsilon}(s)X^{1,\varepsilon}(s-l)\Big\{
f_y(s)P_1(s)+f_z(s)\Big(p(s)\sigma_{xx'}(s)+P(s)\sigma_{x'}(s)\\
&\quad+P_1(s)\sigma_x(s)+P_1(s)\sigma_x(s-l)+Q_1(s)\Big)\Big\}
+E[\widehat{f}_{\mu_2}(s)\widehat{P}_1(s)(\widehat{X}^{1,\varepsilon}(s)\widehat{X}^{1,\varepsilon}(s-l))]+R(s).
\end{aligned}
$$

We want to prove $E[(\int_0^T|R(s)ds|)^2]\leq \varepsilon^2\rho(\varepsilon).$ Firstly,
notice that
\begin{equation}\label{equ 7.5}
\begin{aligned}
\bullet\quad&\int_0^1(f_x^\rho(s)-f_x(s))(X^{1,\varepsilon}(s)+X^{2,\varepsilon}(s))d\rho\\
&=\frac{1}{2} (X^{1,\varepsilon}(s))^2  \Big\{
f_{xx}(s)
+f_{xy}(s)p(s)
+f_{xz}(s)(p(s)\sigma_x(s)+q(s))\Big\}+\frac{1}{2}f_{xx'}(s)X^{1,\varepsilon}(s-l)X^{1,\varepsilon}(s)\\
&\quad+R_1(s,\varepsilon),
\end{aligned}
\end{equation}
where
\begin{equation}\label{equ 7.5-1}
\begin{aligned}
R_1(s,\varepsilon)&=\int_0^1\rho d\rho\int_0^1d\theta\Big\{
f_{xx}^{\rho\theta}(s)(X^{1,\varepsilon}(s)+X^{2,\varepsilon}(s))^2-f_{xx}(s)(X^{1,\varepsilon}(s))^2
\Big\}\\
&+\int_0^1\rho d\rho\int_0^1d\theta\Big\{
f_{xx'}^{\rho\theta}(s)(X^{1,\varepsilon}(s-l)+X^{2,\varepsilon}(s-l))(X^{1,\varepsilon}(s)+X^{2,\varepsilon}(s))\\
&\qquad\qquad\qquad\qquad\qquad\qquad\qquad\qquad\qquad\qquad\qquad\quad-f_{xx'}(s)X^{1,\varepsilon}(s)X^{1,\varepsilon}(s-l)
\Big\}\\
&+\int_0^1\rho d\rho\int_0^1d\theta\Big\{
f_{xy}^{\rho\theta}(s)(\breve{Y}(s)+M(s))(X^{1,\varepsilon}(s)+X^{2,\varepsilon}(s))
-f_{xy}(s)p(s)(X^{1,\varepsilon}(s))^2
\Big\}\\
&+\int_0^1\rho d\rho\int_0^1d\theta\Big\{
f_{xz}^{\rho\theta}(s)(\breve{Z}(s)+B_2(s)+B_3(s)+\overline{B}_3(s))(X^{1,\varepsilon}(s)+X^{2,\varepsilon}(s))\\
&\qquad\qquad\qquad\qquad\qquad\qquad\qquad\qquad\qquad\qquad-f_{xz}(s)(p(s)\sigma_x(s)+q(s))(X^{1,\varepsilon}(s))^2
\Big\}\\
&+\int_0^1\rho d\rho\int_0^1d\theta\Big\{
\widehat{E}[\widehat{f}_{x\mu_1}^{\rho\theta}(\widehat{X}^{1,\varepsilon}(s)+\widehat{X}^{2,\varepsilon}(s))](X^{1,\varepsilon}(s)+X^{2,\varepsilon}(s))
\Big\}\\
&+\int_0^1\rho d\rho\int_0^1d\theta\Big\{
\widehat{E}[\widehat{f}_{x\mu_2}^{\rho\theta}(\widehat{\breve{Y}}(s)+\widehat{M}(s))](X^{1,\varepsilon}(s)+X^{2,\varepsilon}(s))
\Big\}.\\
\end{aligned}
\end{equation}
It is clear that
\begin{equation}\label{equ 7.5-2}
E[(\int_0^T|R_1(s,\varepsilon)| ds)^2]\leq \varepsilon^2\rho(\varepsilon).
\end{equation}
In fact, the kernel to deal with (\ref{equ 7.5-2}) is to prove the following
 inequality involving the second-order derivative $f_{x\mu_2}$ and $X^{1,\varepsilon}(s)$:
\begin{equation}\label{equ 7.5-4}
E\bigg[\int_0^T\Big|\int_0^1\rho d\rho\int_0^1d\theta \Big\{\widehat{E}[\widehat{f}_{x\mu_2}^{\rho\theta}(s)\widehat{p}(s)\widehat{X}^{1,\varepsilon}(s)]
X^{1,\varepsilon}(s)\Big\}\Big|^2ds\bigg]\leq \varepsilon^2\rho(\varepsilon).
\end{equation}
From  H\"{o}lder  inequality, Fubini theorem, (\ref{equ 3.4}) and  (\ref{equ 3.32}),  one can check
\begin{equation}\label{equ 7.5-5}
\begin{aligned}
&E\Big[\int_0^T\Big|\int_0^1\rho d\rho\int_0^1d\theta \Big\{\widehat{E}[\widehat{f}_{x\mu_2}^{\rho\theta}(s)\widehat{p}(s)\widehat{X}^{1,\varepsilon}(s)]
X^{1,\varepsilon}(s)\Big\}\Big|^2ds\Big]\\
&\leq CE\Big[\sup_{s\in[0,T]}|X^{1,\varepsilon}(s)|^2\cdot
\int_0^T\int_0^1\rho\int_0^1\Big|\widehat{E}[\widehat{f}_{x\mu_2}^{\rho\theta}(s)
\widehat{p}(s)\widehat{X}^{1,\varepsilon}(s)]\Big|^2 d \theta d\rho ds\Big]\\
\end{aligned}
\end{equation}
$$
\begin{aligned}
&\leq C\Big\{E\Big[\sup_{s\in[0,T]}|X^{1,\varepsilon}(s)|^4\Big\}^\frac{1}{2}\cdot
\Big\{E\Big[\Big(\int_0^T\int_0^1\rho\int_0^1\Big|\widehat{E}[\widehat{f}_{x\mu_2}^{\rho\theta}(s)
\widehat{p}(s)\widehat{X}^{1,\varepsilon}(s)]\Big|^2 d \theta d\rho ds\Big)^2\Big]\Big\}^\frac{1}{2}\\
&\leq C\varepsilon\Big\{\int_0^1\rho\int_0^1 E\Big[\int_0^T\Big|\widehat{E}[\widehat{f}_{x\mu_2}^{\rho\theta}(s)
\widehat{p}(s)\widehat{X}^{1,\varepsilon}(s)]\Big|^4 ds\Big]d \theta d\rho\Big\}^\frac{1}{2}\leq \varepsilon^2\rho(\varepsilon).
\end{aligned}
$$
Similar to (\ref{equ 7.5}), it is easy to check
\begin{equation}\label{equ 7.6}
\begin{aligned}
&\bullet\quad\int_0^1(f_y^\rho(s)-f_y(s))(X^{1,\varepsilon}(s)+X^{2,\varepsilon}(s))d\rho\\
&\qquad=\frac{1}{2}p(s)(X^{1,\varepsilon}(s))^2  \Big\{
f_{yx}(s)+f_{yy}(s)p(s)+f_{yz}(s)(p(s)\sigma_x(s)+q(s))\Big\}\\
&\qquad\qquad\qquad\qquad\qquad\qquad\qquad\qquad\qquad\qquad
+\frac{1}{2}p(s)f_{yx'}(s)X^{1,\varepsilon}(s-l)X^{1,\varepsilon}(s)+R_2(s,\varepsilon);\\
&\bullet\quad\int_0^1(f_z^\rho(s)-f_z(s))(X^{1,\varepsilon}(s)+X^{2,\varepsilon}(s))d\rho\\
&\qquad=\frac{1}{2}(p(s)\sigma_x(s)+q(s))(X^{1,\varepsilon}(s))^2  \Big\{
f_{zx}(s)
+f_{zy}(s)p(s)
+f_{zz}(s)(p(s)\sigma_x(s)+q(s))\Big\}\\
&\qquad\qquad\qquad\qquad\qquad\qquad\qquad\qquad
+\frac{1}{2}(p(s)\sigma_x(s)+q(s))f_{zx'}(s)X^{1,\varepsilon}(s-l)X^{1,\varepsilon}(s)+R_3(s,\varepsilon);\\
&\bullet\quad\int_0^1d\rho(f_{x'}^\rho(s)-f_{x'}(s))(X^{1,\varepsilon}(s-l)+X^{2,\varepsilon}(s-l))\\
&\qquad=\frac{1}{2}\Big\{
f_{x'x}(s)(X^{1,\varepsilon}(s-l)X^{1,\varepsilon}(s))
+f_{x'x'}(s)(X^{1,\varepsilon}(s-l))^2
+f_{x'y}(s)p(s)(X^{1,\varepsilon}(s-l)X^{1,\varepsilon}(s))\\
&\qquad\qquad\qquad\qquad\qquad\qquad\qquad\qquad
+f_{x'z}(s)(p(s)\sigma_x(s)+q(s))(X^{1,\varepsilon}(s-l)X^{1,\varepsilon}(s))\Big\}
+R_4(s,\varepsilon),\\
\end{aligned}
\end{equation}
where $E[(\int_0^T|R_i(s,\varepsilon)| ds)^2]\leq \varepsilon^2\rho(\varepsilon),\ i=2,3,4.$

In order to complete our proof, we still need to prove the following estimates£»
\begin{equation}\label{equ 7.7}
\begin{aligned}
&\bullet\quad\int_0^1\widehat{E}[(\widehat{f}_{\mu_1}^\rho(s)-\widehat{f}_{\mu_1}(s))
(\widehat{X}^{1,\varepsilon}(s)+\widehat{X}^{2,\varepsilon}(s))]d\rho
=\frac{1}{2}\widehat{E}\Big[
\widehat{f}_{\mu_1a_1}(s)(\widehat{X}^{1,\varepsilon}(s))^2\Big]
+R_5(s,\varepsilon),\\
&\bullet\quad\int_0^1\widehat{E}[(\widehat{f}_{\mu_2}^\rho(s)-\widehat{f}_{\mu_2}(s))(\widehat{\breve{Y}}(s)+\widehat{M}(s))]d\rho
=\frac{1}{2}\widehat{E}\Big[
\widehat{f}_{\mu_2a_2}(s)(\widehat{p}(s))^2(\widehat{X}^{1,\varepsilon}(s))^2\Big]
+R_6(s,\varepsilon),
\end{aligned}
\end{equation}
where $E[(\int_0^T|R_5(s,e)|+|R_6(s,e)|ds)^2]\leq \varepsilon^2\rho(\varepsilon)$.

We mainly deal with the first one in (\ref{equ 7.7}). The second one can be dealt with  the similar argument.\\
Notice that
\begin{equation}\label{equ 7.8}
\begin{aligned}
&\int_0^1\widehat{E}[(\widehat{f}_{\mu_1}^\rho(s)-\widehat{f}_{\mu_1}(s))
(\widehat{X}^{1,\varepsilon}(s)+\widehat{X}^{2,\varepsilon}(s))]d\rho-\frac{1}{2}\widehat{E}\Big[
\widehat{f}_{\mu_1a_1}(s)(\widehat{X}^{1,\varepsilon}(s))^2\Big]\\
&= \Gamma_1(s,\varepsilon)+\Gamma_2(s,\varepsilon)+\Gamma_3(s,\varepsilon)+\Gamma_4(s,\varepsilon),
\end{aligned}
\end{equation}
where
$$
\begin{aligned}
&\Gamma_1(s,\varepsilon):=\int_0^1d\rho\int_0^1d\theta
\widehat{E}[\widehat{f}_{\mu_1a_1}^{\rho \theta}(s)(\widehat{X}^{1,\varepsilon}(s)+\widehat{X}^{2,\varepsilon}(s))^2]
-\frac{1}{2}\widehat{E}\Big[
\widehat{f}_{\mu_1a_1}(s)(\widehat{X}^{1,\varepsilon}(s))^2\Big],\\
&\Gamma_2(s,\varepsilon):=\int_0^1\rho d\rho\int_0^1d\theta
\widehat{E}\Big[(\widehat{X}^{1,\varepsilon}(s)+\widehat{X}^{2,\varepsilon}(s))\widetilde{E}[\widetilde{\widehat{f}}_{\mu_1\mu_1}^{\rho \theta}(s)(\widetilde{X}^{1,\varepsilon}(s)+\widetilde{X}^{2,\varepsilon}(s))]\Big],\\
&\Gamma_3(s,\varepsilon):=\int_0^1\rho d\rho\int_0^1d\theta
\widehat{E}\Big[(\widehat{X}^{1,\varepsilon}(s)+\widehat{X}^{2,\varepsilon}(s))\widetilde{E}[\widetilde{\widehat{f}}_{\mu_1\mu_2}^{\rho \theta}(s)(\widetilde{\breve{Y}}(s)+\widetilde{M}(s))]\Big],\\
\end{aligned}
$$
$$
\begin{aligned}
&\Gamma_4(s,\varepsilon):=\int_0^1\rho d\rho\int_0^1d\theta
\widehat{E}\Big[(\widehat{X}^{1,\varepsilon}(s)+\widehat{X}^{2,\varepsilon}(s))
\Big\{
\widehat{f}_{\mu_1x}^{\rho \theta}(s)(X^{1,\varepsilon}(s)+X^{2,\varepsilon}(s))\\
&\qquad\qquad\qquad\qquad\qquad\quad+\widehat{f}_{\mu_1x'}^{\rho \theta}(s)(X^{1,\varepsilon}(s-l)+X^{2,\varepsilon}(s-l))
+\widehat{f}_{\mu_1y}^{\rho \theta}(s)(\breve{Y}(s)+M(s))\\
&\qquad\qquad\qquad\qquad\qquad\quad+\widehat{f}_{\mu_1z}^{\rho \theta}(s)(\breve{Z}(s)+B_2(s)+B_3(s)+\overline{B}_3(s))
\Big\}\Big].\\
\end{aligned}
$$
Let us rewrite $\Gamma_1(s,\varepsilon)$ as follows
\begin{equation}\label{equ 7.9}
\begin{aligned}
\Gamma_1(s,\varepsilon)&=\int_0^1d\rho\int_0^1d\theta
\widehat{E}[(\widehat{f}_{\mu_1a_1}^{\rho \theta}(s)-\widehat{f}_{\mu_1a_1}(s))
(\widehat{X}^{1,\varepsilon}(s))^2]\\
&\quad+\int_0^1d\rho\int_0^1d\theta
\widehat{E}[\widehat{f}_{\mu_1a_1}^{\rho \theta}(s) (2\widehat{X}^{1,\varepsilon}(s)\widehat{X}^{2,\varepsilon}(s)+
(\widehat{X}^{2,\varepsilon}(s))^2).\\
\end{aligned}
\end{equation}
From the Lipschitz property of $\widehat{f}_{\mu_1a_1}(s)$ and (\ref{equ 3.4}),
one can show $E[(\int_0^T|\Gamma_1(s,\varepsilon)| ds)^2]\leq \varepsilon^2\rho(\varepsilon)$ easily.

As for $\Gamma_3(s,\varepsilon)$, according to (\ref{equ 3.4}) and (\ref{equ 3.32}), one has
\begin{equation}\label{equ 7.10}
\begin{aligned}
&E\Big[\int_0^T \Big|\int_0^1\rho d\rho\int_0^1d\theta
\widehat{E} [\widehat{X}^{1,\varepsilon}(s)\widetilde{E}[\widetilde{\widehat{f}}_{\mu_1\mu_2}^{\rho \theta}(s)\widetilde{p}(s)\widetilde{X}^{1,\varepsilon}(s)] ]\Big|^2ds\Big]\\
&\leq\frac{1}{2}E\Big[\int_0^T \int_0^1\rho d\rho\int_0^1d\theta
E[\sup_{s\in[0,T]}|X^{1,\varepsilon}(s)|^2]\cdot
\widehat{E}\Big |\widetilde{E}[\widetilde{\widehat{f}}_{\mu_1\mu_2}^{\rho \theta}(s)\widetilde{p}(s)\widetilde{X}^{1,\varepsilon}(s)]\Big|^2ds\Big]\\
&\leq\frac{1}{2}\varepsilon\int_0^1\rho d\rho\int_0^1 d\theta
\Big\{E\widehat{E}[\int_0^T|\widetilde{E}[\widetilde{\widehat{f}}_{\mu_1\mu_2}^{\rho \theta}(s)\widetilde{p}(s)\widetilde{X}^{1,\varepsilon}(s)]|^2ds]\Big\}\\
&\leq\frac{\sqrt{T}}{4}\varepsilon \int_0^1\rho d\rho\int_0^1 d\theta
\Big\{E\widehat{E}[\int_0^T|\widetilde{E}[\widetilde{\widehat{f}}_{\mu_1\mu_2}^{\rho \theta}(s)\widetilde{p}(s)\widetilde{X}^{1,\varepsilon}(s)]|^4ds]\Big\}^\frac{1}{2}\\
&\leq\varepsilon^2\rho(\varepsilon).
\end{aligned}
\end{equation}
$\Gamma_2(s,\varepsilon)$ can be calculated with the similar method. We now turn our attention to $\Gamma_4(s,\varepsilon)$.
 The critical component of  $\Gamma_4(s,\varepsilon)$
 is
  $$\Gamma_{4,1}(s,\varepsilon):=\int_0^1\rho\int_0^1
\widehat{E} [\widehat{X}^{1,\varepsilon}(s)\widehat{f}_{\mu_1z}^{\rho \theta}(s)q(s)X^{1,\varepsilon}(s)]d\theta d\rho.$$
Thanks to H\"{o}lder inequality,  (\ref{equ 3.4}) and (\ref{equ 3.32}) again, it yields
 \begin{equation}\label{equ 7.11}
\begin{aligned}
 &E[(\int_0^T|\Gamma_{4,1}(s,\varepsilon)| ds)^2]\\
 &\leq E\Big[\sup_{s\in[0,T]}|X^{1,\varepsilon}(s)|^2
 (\int_0^T\int_0^1\rho\int_0^1
\widehat{E}[ \widehat{f} _{\mu_1z}^{\rho \theta}(s)\widehat{X}^{1,\varepsilon}(s)]\cdot q(s)d\theta d\rho ds)^2\Big]\\
 &\leq CE\Big[\sup_{s\in[0,T]}|X^{1,\varepsilon}(s)|^2\cdot
 \int_0^T|q(s)|^2ds\cdot
 \int_0^T\int_0^1\rho\int_0^1
 |\widehat{E}[ \widehat{f}_{\mu_1z}^{\rho \theta}(s)\widehat{X}^{1,\varepsilon}(s)]|^2d\theta d\rho ds\Big]\\
  &\leq C \Big\{E[\sup_{s\in[0,T]}|X^{1,\varepsilon}(s)|^8]\Big\}^\frac{1}{4}\cdot
  \Big\{E[(\int_0^T|q(s)|^2ds)^4]\Big\}^\frac{1}{4}\cdot\\
&\qquad\qquad\qquad\qquad\qquad\qquad\qquad   \Big\{E[(\int_0^T\int_0^1\rho\int_0^1
 |\widehat{E}[ \widehat{f}_{\mu_1z}^{\rho \theta}(s)\widehat{X}^{1,\varepsilon}(s)]|^2d\theta d\rho ds)^2]\Big\}^\frac{1}{2}\\
 &\leq C\varepsilon  \Big\{\int_0^1\rho\int_0^1\int_0^T
 E[|\widehat{E}[ \widehat{f}_{\mu_1z}^{\rho \theta}(s)\widehat{X}^{1,\varepsilon}(s)]|^4]dsd\theta d\rho\Big\}^\frac{1}{2}
 \leq \varepsilon^2\rho(\varepsilon).
\end{aligned}
\end{equation}
Hence, if define $R_5(s,\varepsilon):=\Gamma_1(s,\varepsilon)+\Gamma_2(s,\varepsilon)+\Gamma_3(s,\varepsilon)+\Gamma_4(s,\varepsilon)$,
then from (\ref{equ 7.9})-(\ref{equ 7.11})  it follows
$E[(\int_0^T|R_5(s,\varepsilon)| ds)^2]\leq \varepsilon^2\rho(\varepsilon).$

Making use of the above argument, one can also have the second estimate in (\ref{equ 7.7}) with
$E[(\int_0^T|R_6(s,\varepsilon)| ds)^2]\leq \varepsilon^2\rho(\varepsilon).$
At last, combing  (\ref{equ 7.5}),\ (\ref{equ 7.6}),\ (\ref{equ 7.7}) we have the desired result.


\renewcommand{\refname}{\large References}{\normalsize \ }


\begin{thebibliography}{99}

\bibitem{AHMP}
M. Arriojas, Y. Hu, S. Mohammed, G. Pap,
(2007)
\emph{A delay Black and  Scholes formula},
Stoch. Analysis Appl.,
\textbf{25} (2): 471-492.

\bibitem{BDLP}
R. Buckdahn, B. Djehiche, J.  Li,  S. Peng,
(2009)
\emph{Mean-field backward stochastic differential equations: A limit approach},
Ann. Probab.,
\textbf{37}(4): 1524-1565.


\bibitem{BLP}
R. Buckdahn,  J. Li, S. Peng,
(2009)
\emph{Mean-field backward stochastic differential equations and related partial differential equations},
Stoch. Proc. Appl.,
\textbf{119}: 3133-3154.

\bibitem{BLPR}
R. Buckdahn, J.  Li, S.  Peng, C. Rainer,
(2017)
\emph{Mean-field stochastic differential equations and associated PDEs},
Ann. Probab.,
\textbf{45}: 824-874.


\bibitem{BLM1}
R. Buckdahn,  J. Li, J. Ma,
(2016)
\emph{A stochastic maximum principle for general   mean-field systems},
Appl. Math. Optim.,
\textbf{74}: 507-534.

\bibitem{BLM2}
R. Buckdahn,  J. Li, J. Ma,
(2017)
\emph{A mean-field stochastic control problem with partial observations},
Ann. Appl. Probab.,
\textbf{27}(5).

\bibitem{CD} R. Carnoma, F. Delarue,
(2013)
\emph{Probabilistic analysis of mean-field games},
SIAM J. Control Optim.,
\textbf{51}(4): 2705-2734.

\bibitem{CD1}
R. Carnoma, F. Delarue, A. Lachapelle,
(2013)
\emph{Control of McKean-Vlasov dynamics versus mean field games},
 Math. Financ. Econ.,
\textbf{7}(2): 131-166.



\bibitem{CD2}
 R. Carmona, F. Delarue,
 (2015)
\emph{Forward-backward stochastic differential equations and controlled McKean Vlasov dynamics},
Ann. Probab.,
\textbf{43}(5): 2647-2700.




\bibitem{CCD}
J.F. Chassagneux, D. Crisan, F. Delarue,
(2015)
\emph{A probabilistic approach to classical solutions
of the master equation for large population equilibria},
http://arxiv.org/abs/1411.3009v2.


\bibitem{CH}
L. Chen, J. Huang,
(2015)
\emph{ Stochstic maximum principle for controlled backward delayed system via advanced stochastic differential equation},
J. Optim. Theory Appl.
\textbf{167}: 1112-1135.




\bibitem{CW}
L. Chen, Z. Wu,
(2010)
\emph{Maximum principle for the stochastic optimal control problem with delay and applications},
Automatica,
\textbf{46}: 1074-1080.

\bibitem{EOS}
I. Elsanosi, B. $\phi$ksendal, A. Sulem,
(2000)
\emph{Some solvable stochastic control problems with delay},
Stochastics,
\textbf{71}: 69-89.


\bibitem{KPQ}
N. El Karoui, S. Peng, M. Quenez,
(1997)
\emph{Backward Stochastic Differential Equation in Finance},
Math. Finance,
\textbf{7}(1): 1-71.




\bibitem{GM} G. Guatteri, F. Masiero
(2018)
\emph{Stochastic maximum principle for equations with delay: the non-convex case},
http://cn.arxiv.org/pdf/1805.07957.


\bibitem{GXZ} H. Guo, J. Xiong, J. Zheng,
(2017)
\emph{Stochastic maximum principle for generalized mean-field delay control problem},
https://arxiv.org/pdf/1708.03622.



\bibitem{HL} T. Hao, J. Li
(2016)
\emph{Mean-field SDEs with jumps and nonlocal integral-PDEs},
Nonlinear Differ. Equ. Appl.,
\textbf{23}(2): 1-51.




\bibitem{Hu} M.  Hu,
(2017)
\emph{Stochastic global maximum principle for optimization with recursive utilities},
Probab. Unce. Quanti. Risk,
\textbf{2}(1): 1-20.


\bibitem{HJX} M.  Hu, S. Ji, X. Xue
(2018)
\emph{A global stochastic maximum principle for fully coupled forward-backward stochastic systems},
arxiv: 1803.02109v3.

\bibitem{Kac} M. Kac,
(1956)
\emph {Foundations of kinetic theory},
In Proceedings of the 3rd Berkeley Symposium
on Mathematical Statistics and Probability,
\textbf{3}: 171-197.

\bibitem{Li}
J. Li,
(2018)
\emph{Mean-field forward and backward SDEs with jumps. Associated nonlocal quasi-linear integral-PDEs},
Stoch. Proc. Appl.,
\textbf{128}(9): 3118-3180.



\bibitem{Lions}
P.L. Lions,
(2013)
\emph {Cours au Coll\`{e}ge de France : Th\'eorie des jeu \`{a} champs moyens},
http://www.college-de-france.fr/default/EN/all/equ[1]der/audiovideo.jsp.



\bibitem{Mckean}H.P. McKean,
(1966)
\emph {A class of Markov processes associated with nonlinear parabolic equations},
Proceedings of the National Academy of Sciences,
\textbf{56}: 1907¨C1911.

\bibitem{Peng1}
S. Peng,
(1990)
\emph{A general stochastic maximum principle for optimal control problems},
SIAM J. Control Optim.,
\textbf{28}: 966-979.



\bibitem{SMS}
Y. Shen, Q. Meng, P. Shi
(2014)
\emph{Maximum principle for mean-field jump-diffusion stochastic delay differential equations and its application to finance},
Automatica,
\textbf{50}(6):1565-1579.

\bibitem{Yong} J. Yong,
(2010)
\emph{Optimality variational principle for controlled forward-backward stochastic differential equations with mixed initial-terminal conditions}
SIAM J, Control Optim.,
\textbf{48}(6): 4119-4156.




\end{thebibliography}
\end{document}